\documentclass[10pt]{amsart}
\usepackage{geometry}                
\geometry{letterpaper}                   
\usepackage{graphicx}
\usepackage{amssymb}
\usepackage{epstopdf}
\usepackage{multicol}
\DeclareGraphicsRule{.tif}{png}{.png}{`convert #1 `dirname #1`/`basename #1 .tif`.png}

\newtheorem{theorem}{Theorem}
\newtheorem{prop}[theorem]{Proposition}

\newtheorem{lemma}[theorem]{Lemma}

\newtheorem{remark}[theorem]{Remark}




\def \bb {\beta}
\def \gg {\gamma}
\def \dd {\delta}
\def \TT {\tau}
\def \bu {\bullet}
\def \SRA {\, \longrightarrow }
\def\multi#1{\vbox{\baselineskip=0pt\halign{\hfil$\scriptstyle\vphantom{(_)}##$\hfil\cr#1\crcr}}}

\def \BB {{\bf B}}

\def\tttt #1{{\textstyle{#1} }}

\def \DA {\downarrow\hskip -.04in}

\def \magstep#1 {\ifcase#1 1000\or 1200\or 1440\or 1728\or 2074\or 2488\fi\relax}

\def\la{{\lambda}}

\def \con {\subseteq}

\def\sig{\sigma}

\def \ggg {\gamma}
\def \GG {\Gamma}

\def \-> {\rightarrow}
\def\LL{\big\langle}

\def\RR {\big\rangle}

\def\OM {\Omega}

\def\om {\omega}
\def\la {\lambda}
\def\La {\Lambda}
\def \RA {\rightarrow}
\def \LA {\lefttarrow}

\def \sas {\vskip .06truein}
\def\sa{{\vskip .125truein}}

\def \eee {\epsilon}

\def\aaa {\alpha}
\def\bbb {\beta}

\def\ggg {\gamma}
\def\aa {\alpha}
\def\bb {\beta}
\def\dd{\delta}
\def\gg {\gamma}
\def\con {\subseteq}

\def \ses {\enskip = \enskip}
\def \sps {\, + \,}

\def \sms {\, - \,}

\def \shhh {{\scriptstyle \cup\hskip -.18em\cup}}

\def \scs {\, , \,}
\def \ess {\enskip}

\def \ssp {\hskip .25em}
\def \bigsp {\hskip .5truein}
\def \part {\vdash}

\def \OM {{\Omega}}

\def \RA {{ \rightarrow }}
\def \LA {{ \leftarrow }}

\def \om {\omega}

\def \TH {{\tilde H}}

\def \om {\omega}

\def \TH {{\tilde H}}

\def \scs {\ssp , \ssp}
\def \ess {\enskip}
\def \ssp {\hskip .25em}
\def \bigsp {\hskip .5truein}
\def \part {\vdash}

\def \BB {{\bf B}}
\def \BC {{\bf C}}
\def \BZ {{\bf Z}}

\def \CPF {{\mathcal PF}_n}

  \def \BB {{\bf B}}
 \def \TBH {  {\bf C}}

\def\middle{{\it middle}}
\def\abig{{\it big}}
\def\asmall{{\it small}}

\numberwithin{equation}{section}
\numberwithin{theorem}{section}


\title[Three Shuffle Parking Functions]{ A three shuffle case of the compositional parking function conjecture}
\author{Adriano M. Garsia, Guoce Xin and Mike Zabrocki}



\begin{document}

\begin{abstract}
We prove here that the polynomial  $\LL\nabla  \BC_p\, 1\scs  e_ah_b h_c\RR$
$q,t$-enumerates, by the statistics {\it dinv} and {\it area}, 
the parking functions whose supporting Dyck path touches the main diagonal 
according to the composition $p\models a+b+c$ and have a reading word which 
is a shuffle of one decreasing word and two increasing words of respective sizes $a,b,c$.
Here $\BC_p\, 1$ is a rescaled Hall-Littlewood polynomial
and $\nabla$ is the Macdonald eigen-operator introduced by Bergeron and Garsia \cite{BG:1999}. 
This is our latest 
progress in  a continued effort to settle the decade old {\it shuffle conjecture}
of Haglund et. al. \cite{HHLRU:2005}.
This result includes as special cases all previous results connected with the shuffle conjecture
such as the $q,t$-Catalan \cite{GH:2002} and the Schr\"oder and $h,h$ results of Haglund in \cite{Haglund:2004} 
as well as their compositional refinements recently obtained by the authors in \cite{GXZ:2011} and \cite{GXZ}. 
It also confirms the possibility that the approach adopted in \cite{GXZ:2011} and \cite{GXZ} has the potential 
to yield a resolution of the shuffle parking function conjecture as well as its 
compositional refinement more recently proposed by Haglund, Morse and Zabrocki in \cite{HMZ:2012}.
\end{abstract}
\maketitle

\begin{section}{Introduction}

A parking function may be visualized as a Dyck path in an 
$n \times n$ lattice square with the cells adjacent to the vertical edges of the path 
labelled with a permutation of the integers $\{1,2, \ldots, n\}$ in a column increasing 
way (see attached figure). We will borrow from parking function language by
calling these labels {\it cars}.
The corresponding preference function is simply obtained by 
specifying that {\it car } $i$ prefers to park at the bottom of its column. 
This visual representation, which  has its origins in  \cite{GH:1996}, uses the Dyck path  
to assure that the resulting preference function  parks the cars. 

\noindent
\begin{minipage}[l]{0.70 \textwidth}

\hskip .2in The sequence of cells 
that joins the SW corner of the lattice square to the NE corner will be called  
the {\it main diagonal} or the  $0$-diagonal of the parking function. The successive diagonals 
above the main diagonal will be referred to as diagonals $1,2,\ldots, n-1$ respectively. 
On the left of  the adjacent display we have listed the diagonal numbers of the 
corresponding cars. It is also convenient to represent a parking function as  a two line array
\begin{equation}
PF = 
\begin{bmatrix}
v_1 & v_2 & \cdots  &  v_n\\
u_1 & u_2 & \cdots  &  u_n
\end{bmatrix}
\label{eq:II.1}
\end{equation}
\end{minipage}
\hfill$
\vcenter{\hbox{\includegraphics[width=1.7in]{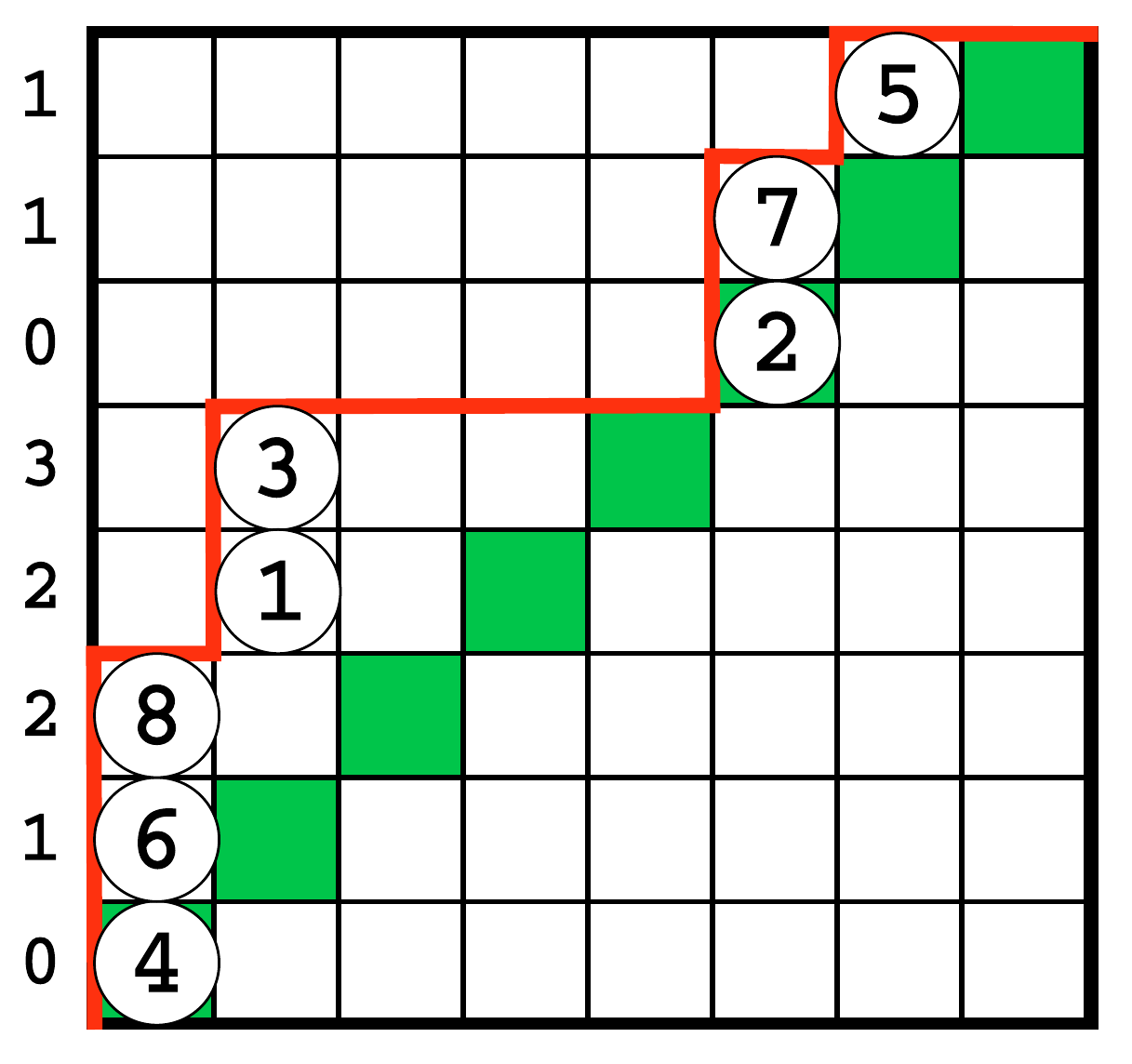}}}$ 

\noindent
where $v_1,v_2,\ldots ,v_n$ are the cars as we read them by  rows from bottom to top
and $u_1,u_2,\ldots ,u_n$ are their corresponding diagonal numbers. From the geometry of the above display we can immediately  see that for a two line array to represent a parking function it is necessary and sufficient that we have
\begin{equation}
u_1=0\ess\ess\ess \hbox{and }\ess\ess\ess\ess 0\le u_i\le u_{i-1}+1
\label{eq:II.2}
\end{equation}
with $V=(v_1 , v_2 , \ldots  ,  v_n)$ a permutation satisfying the condition
\begin{equation}
u_i=u_{i-1}+1\ess\Longrightarrow\ess v_i>v_{i-1}~.
\label{eq:II.3}
\end{equation}
For instance the parking function in the above display corresponds to the following two line array
$$
PF=\begin{bmatrix}
4 &  6 &  8 &  1 &  3  & 2 &  7 &  5 \\
0 &  1 &  2 &  2 &  3  & 0 &  1 &  1 
\end{bmatrix}
$$ 

Parking functions will be enumerated here by means of a weight that is easily defined in terms of their two line arrays. To this end, let us denote by $\sig(PF)$ the permutation 
obtained by successive right to left readings of the components 
of  $V=(v_1 , v_2 , \ldots  ,  v_n)$ according to decreasing values of
$u_1 , u_2 , \ldots  ,  u_n$. We will call $\sig(PF)$ the
{\it diagonal word} of $PF$. We will also let $ides(PF)$ 
denote the descent set of the inverse of $\sig(PF)$. 

\hskip .2in This given, each parking function is assigned the weight  
\begin{equation}
w(PF)\ses t^{area(PF)} q^{dinv(PF)} Q_{ides(PF)}[X]
\label{eq:II.4}
\end{equation}

where 
\begin{equation}
area(PF)\ses \sum_{i=1}^n u_i
\label{eq:II.5}
\end{equation}

\noindent 

\begin{equation}
dinv(PF)= \hskip -.07in
\sum_{1\le i<j\le n } \chi( u_i=u_j\, \&\, v_i<v_j)  +\hskip -.07in
\sum_{1\le i<j\le n }\hskip -.07in \chi( u_i=u_j+1\, \&\, v_i>v_j)
\label{eq:II.6}
\end{equation}
and for a subset $S\con\{1,2, \ldots, n-1\}$, $Q_S[X]$ denotes  Gessel's \cite{Gessel:1984}
fundamental quasi-symmetric function.

These statistics also have  a  geometrical meaning and can  be directly obtained from the visual representation. In fact we can easily see that 
 the diagonal numbers give  the number of lattice cells 
between the Dyck path and the main diagonal in the row of each corresponding car.
Thus  the sum in \eqref{eq:II.5} gives the total number of cells between the supporting Dyck path and 
the main diagonal.

It is also easily seen that two cars in the same diagonal with the car on the left smaller 
than the car on the right will contribute a unit to $dinv(PF)$ called {\it a primary diagonal inversion}.
Likewise,  
a car on the left that 
is bigger than a car on the right with the latter in the adjacent lower diagonal

\noindent
$\vcenter{\hbox{
\includegraphics[width=1.6in]{park2.pdf}}}$\hskip .15in
\begin{minipage}{.70 \textwidth}
contributes a unit to $dinv(PF)$ called {\it a secondary diagonal inversion}.
Thus the sum in \eqref{eq:II.6} gives the total number of {\it diagonal inversions} of the parking function.

Note that reading the cars  by diagonals from right to left
starting with the highest diagonal we see that car $3$ is in the third diagonal,
$1$ and $8$ are in the second 
diagonal, $5,7$ and $6$ are in the first diagonal and $2$ and $4$ are in 
the main diagonal. This gives

\begin{equation}
\sig(PF)\ses 3\,1\,8\,5 \,7\,6 \,2\,4
\ess\ess\ess\hbox{and}\ess\ess\ess 
 ides(PF)=\{2,4,6,7\}
\label{eq:II.7}
\end{equation}
\end{minipage}

\noindent
Thus  for the parking function given  above we have 
$$
 area(PF)=10,\ess dinv(PF)=4,
$$
which   together with \eqref{eq:II.7} gives
$$
w(PF)\ses t^{10}q^4 Q_{\{2,4,6,7\}}[X].
$$
In \cite{HMZ:2012}, Haglund, Morse and Zabrocki introduce an additional  statistic, the {\it diagonal composition} of a parking function,
which we denote by $p(PF)$. This is  the composition whose parts
determine the position of the  zeros in the vector  $U=(u_1,u_2,\ldots ,u_n)$,
or equivalently give the lengths of the segments between successive diagonal  touches of its Dyck path. Thus $p(PF)=(5,3)$ for the above example.

Denoting by $\CPF$  the collection of parking functions
 in the $n\times n$ lattice square one of the conjectures in   \cite{HMZ:2012} states that
 for any $p=(p_1,p_2,\ldots ,p_k)\models n$ we have
\begin{equation}
 \nabla \BC_{p_1}\BC_{p_2}\cdots \BC_{p_k}1\ses\hskip -.2in
\sum_{\multi {PF\in \CPF \cr p(PF)=(p_1,p_2,\ldots ,p_k)}}\hskip -.2in
t^{area(PF)} q^{dinv(PF)}Q_{ides(PF)}[X] 
\label{eq:II.8}
\end{equation} 
where $\nabla$ is the Bergeron-Garsia operator introduced in \cite{BG:1999} and, for each integer $a$,
$\BC_a$ is the  operator plethystically defined by setting for any symmetric function $P[X]$
\begin{equation}
 \BC_aP[X]\ses  \left(\frac{-1}{q}\right)^{a-1} \sum_{k\ge 0} P\left[X-\frac{1-1/q}{z}\right]
 z^k h_k[X]\Big|_{z^a}~.
\label{eq:II.9}
\end{equation} 
 It follows from a theorem of Gessel \cite{Gessel:1984} that the identity in \eqref{eq:II.9} is equivalent to the statement that  for any 
composition $(p_1,p_2,\ldots ,p_k)\models n$ and any partition 
$\mu=(\mu_1,\mu_2,\ldots ,\mu_\ell)\part n$
we have  
\begin{equation}
\LL\nabla \BC_{p_1}\BC_{p_2}\cdots \BC_{p_k}\, 1 \scs h_{\mu_1}h_{\mu_2}\cdots h_{\mu_\ell}\RR =\hskip -.2in
\sum_{\multi {PF\in \CPF \cr p(PF)=(p_1,p_2,\ldots ,p_k)}}\hskip -.2in
t^{area(PF) }q^{dinv(PF)}\chi(\sig(PF)\in 
E_1\shhh E_2\shhh \cdots \shhh E_\ell)
\label{eq:II.10}
\end{equation} 
where  
$E_1,E_2,\ldots ,E_\ell$ are successive segments 
of the word $1234\cdots n$ of respective lengths
$\mu_1,\mu_2,\ldots ,\mu_\ell$ and the symbol 
\hbox{$\chi(\sig(PF)\in 
E_1\shhh E_2\shhh \cdots \shhh E_\ell)$} is to indicate that the sum is to be carried out over parking functions in $\CPF$ whose diagonal word
is a shuffle of the words $E_1,E_2,\ldots,E_\ell$. In this paper we show that the symmetric function methods developed in 
\cite{BGHT:1999} and \cite{GHT:1999} can also be used to obtain the following identity
\sa

\begin{theorem} \label{thm:II.1}
For any triplet of integers $a,b,c\ge 0$ and compositions $p=(p_1,p_2,\ldots ,p_k)\models a+b+c$ we have  

\begin{equation}
\LL\nabla \BC_{p_1}\BC_{p_2}\cdots \BC_{p_k}\, 1 \scs  e_ah_{b} h_c\RR =
\sum_{\multi {PF\in \CPF \cr p(PF)=(p_1,p_2,\ldots ,p_k)}}
t^{area(PF) }q^{dinv(PF)}\chi\big(\sig(PF)\in\,\, 
 \DA E_1\, \shhh \,E_2\, \shhh E_3\,\big)
\label{eq:II.11}
\end{equation} 
where $E_1,E_2,E_3$ are successive segments 
of the word $1234\cdots n$ of respective lengths
$a,b,c$ and $\DA E_1$ denotes the reverse of the word $E_1$.
\end{theorem}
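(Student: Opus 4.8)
\medskip\noindent
The plan is to prove \eqref{eq:II.11} by showing that its two sides obey one and the same recursion and then matching a handful of base cases. Write
$$
F_p(a,b,c)\ses\LL\nabla \BC_{p_1}\BC_{p_2}\cdots \BC_{p_k}\, 1\scs e_ah_bh_c\RR
$$
for the left-hand side and let $G_p(a,b,c)$ denote the right-hand side of \eqref{eq:II.11}; both are polynomials in $q$ and $t$. I would argue $F_p=G_p$ by induction on $n=a+b+c$: the case $n=0$ is trivial (both sides equal $1$), the cases in which at least one of $a,b,c$ vanishes are the previously established compositional $q,t$--Catalan identity of \cite{GXZ:2011} together with the Schr\"oder and $h,h$ identities of \cite{GXZ}, and the remaining ``fully mixed'' cases $a,b,c\ge 1$ follow once both sides are shown to satisfy a common recursion expressing the $n$-instance in terms of $(n-1)$-instances. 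Within the inductive step, the composition $p$ is manipulated through the commutation identities for the operators $\BC_a$, so in practice the argument also carries a subsidiary induction on the number of parts of $p$ and the size of its first part.

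First I would establish the combinatorial recursion for $G_p(a,b,c)$. Since $E_2$ and $E_3$ are increasing and carry the largest labels while $\DA E_1$ is the decreasing word on the smallest $a$ cars, the largest car of any parking function counted by $G_p$ is forced to occupy the last slot of whichever of $E_2,E_3$ is non-empty (or, when $b=c=0$, the first slot of $\DA E_1$). Deleting that car together with its row and recording its diagonal number $d$ produces a parking function on $n-1$ cars whose shuffle type, diagonal composition, and statistics $area$ and $dinv$ all change in a controlled way: $d$ accounts for the drop in $area$, the lost primary and secondary diagonal inversions are determined by the relative position of the deleted car with respect to the cars in its own and the adjacent lower diagonal, and $ides$ is tracked directly through the definition of $\sig$. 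Summing over the admissible values of $d$, and isolating the boundary situation in which the deleted car sits at a diagonal touch (where a part of $p$ is shortened), expresses $G_p(a,b,c)$ as an explicit $q,t$--weighted sum of terms $G_{p'}(a',b',c')$ with $a'+b'+c'=n-1$.

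Next I would reproduce this recursion on the symmetric function side using the plethystic operator calculus of \cite{BGHT:1999} and \cite{GHT:1999}. Passing to adjoints, $F_p(a,b,c)=\LL h_c^{\perp}h_b^{\perp}\nabla \BC_{p_1}\cdots\BC_{p_k}1\scs e_a\RR$, the two skewing operators $h^{\perp}$ are resolved by the Pieri-type commutation rules between $\nabla$, the operators $D_k$, and multiplication, which convert a skewing into a $q,t$--combination of operators acting on the Hall--Littlewood-type vector $\BC_{p_1}\cdots\BC_{p_k}1$; the residual pairing against $e_a$ is then handled by the ``column addition'' identity already used in the Schr\"oder case of \cite{GXZ}. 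Finally the $\BC_a$ factors are unwound through the identities relating $\nabla\BC_a$ to $\BC_{a-1}$ and to the $D_k$, which either shorten the first part of $p$ or lower the number of parts. Collecting the contributions yields a formula for $F_p(a,b,c)$ as a $q,t$--weighted sum of $F_{p'}(a',b',c')$ of strictly smaller total degree, and a term-by-term comparison with the combinatorial recursion closes the induction.

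The step I expect to be the main obstacle is precisely this term-by-term comparison. In the one-increasing-word and two-increasing-word cases the deleted car interacts with a single increasing block, but here it can create secondary diagonal inversions simultaneously against $E_2$ and against $E_3$ as well as primary inversions inside its own block, while the reversal $\DA E_1$ contributes cross-terms at both the $E_1$--$E_2$ and the $E_1$--$E_3$ interfaces; reproducing all of these from the output of the Pieri and column-addition manipulations requires careful -- though in principle routine -- bookkeeping with the summation formulas of \cite{GHT:1999}. A secondary difficulty is choosing the order in which the two $h^{\perp}$'s, the pairing against $e_a$, and the $\BC_a$-recursion are applied so that the compositional refinement, i.e. the dependence on $p$, is respected at every intermediate stage.
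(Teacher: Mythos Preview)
Your outline is a reasonable high-level strategy, but it is not a proof, and it diverges from the paper's argument at the crucial point.

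The paper does proceed by a common recursion, but the recursion it uses is \emph{not} obtained by deleting the largest car. Instead it acts on the \emph{first part} $m$ of the composition $p=(m,\alpha)$: on the symmetric function side one computes $\BC_m^*\nabla h_a^*e_b^*e_c^*$ using the $*$-adjoints of $\BC_m$ and $\BB_{m-1},\BB_{m-2}$ and obtains a very specific identity (the paper's Theorem~\ref{thm:II.2}), including the unexpected $(q-1)\sum_{i:\alpha_i=1}q^{i-1}$ term in the $m=1$ case. Only \emph{after} this recursion has been found does one verify it combinatorially, by removing the car in cell $(1,1)$ (and, when that car is $a{+}b$ with $m=1$, by a delicate inclusion-exclusion over the singleton parts of $\alpha$). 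The paper is explicit that in this three-shuffle case the recursion could not have been guessed from the combinatorics; it had to be discovered on the symmetric function side and then matched.

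Your proposal inverts this: you want to delete the largest car on the combinatorial side and then reproduce the effect via $h_b^\perp,h_c^\perp$ on the symmetric function side. There are two concrete problems. First, deleting the largest car does not interact cleanly with the diagonal composition $p$: car $n$ can sit anywhere along the path (top of any column), and removing it may shorten a part, delete a singleton part, or leave $p$ unchanged while shifting area and dinv in a position-dependent way; you have not written down the resulting recursion, so there is nothing to compare. Second, and more seriously, there is no known commutation identity that passes $h_k^\perp$ through $\nabla\BC_{p_1}\cdots\BC_{p_k}1$ in a way that respects the compositional index $p$; the manipulations you allude to (``Pieri-type commutation rules between $\nabla$, the operators $D_k$, and multiplication'') would have to produce exactly the $(q-1)$-weighted sum over singleton parts of $\alpha$ that appears in the $m=1$ case, and nothing in your sketch indicates how that would arise. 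Until you can write down an explicit recursion on both sides and exhibit the match, the argument has a genuine gap at precisely the step you yourself flag as the obstacle.
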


Since in \cite{HMZ:2012} it is shown that  
\begin{equation}
\sum_{p\models n}\BC_{p_1}\BC_{p_2}\cdots \BC_{p_k}1\ses e_n,
\label{eq:II.12}
\end{equation} 
summing \eqref{eq:II.11} over all compositions of $n$ we obtain that
\begin{equation}
\LL \nabla e_n\scs e_ah_{b}h_c\RR\ses
\sum_  {PF\in \CPF }
 t^{area(PF) }q^{dinv(PF)}\chi\big(\sig(PF)\in \,\,
 \DA  E_1\, \shhh \,E_2\, \shhh E_3\,\big).
\label{eq:II.13}
\end{equation} 

Setting $b=c=0$ in \eqref{eq:II.13} gives 
\begin{equation}
\LL \nabla e_n\scs e_n\RR\ses
\sum_  {PF\in \CPF }
 t^{area(PF) }q^{dinv(PF)}\chi\big(\sig(PF)=n\cdots 321
\,\big)~,
\label{eq:II.14}
\end{equation} 
which is the $q,t$-Catalan result of \cite{GH:2002}.
Setting $c=0$ or $a=0$ gives the following two identities
proved by Haglund in \cite{Haglund:2004}, Namely the Schr\"oder result
\begin{equation}
\LL \nabla e_n\scs e_k h_{n-k}\RR\ses
\sum_  {PF\in \CPF }
 t^{area(PF) }q^{dinv(PF)}\chi\big(\sig(PF)\in \,\,  \DA  A\, \shhh B\,
\,\big)
\label{eq:II.15}
\end{equation}  
and the shuffle of two segments result \cite{Haglund:2004} (see also
\cite{Haglund:2008} and \cite{GHS:2011})
\begin{equation}
\LL \nabla e_n\scs h_k h_{n-k}\RR\ses
\sum_  {PF\in \CPF }
 t^{area(PF) }q^{dinv(PF)}\chi\big(\sig(PF)\in   A\, \shhh  B\,
\,\big)
\label{eq:II.16}
\end{equation} 
with $A$ and $B$ successive segments  of respective lengths
$k$ and $n-k$. We should also note that setting $c=0$ and $a=0$ in \eqref{eq:II.11} gives the  two   
identities proved in \cite{GXZ:2011} and \cite{GXZ}.


It will be good at this point to exhibit  at least an instance of the identity in \eqref{eq:II.11}.
Below we have  the six parking functions with diagonal composition $(3,2)$
whose diagonal word is in the shuffle $1\shhh 23\shhh 45$.

Reading the display by rows, starting from the top row, we see that the first $PF$ has area $4$ and the remaining ones have area $3$. Their dinvs are respectively created by the pairs
of cars  listed below each parking function
$$
\vcenter{\hbox{\includegraphics[width=4in]{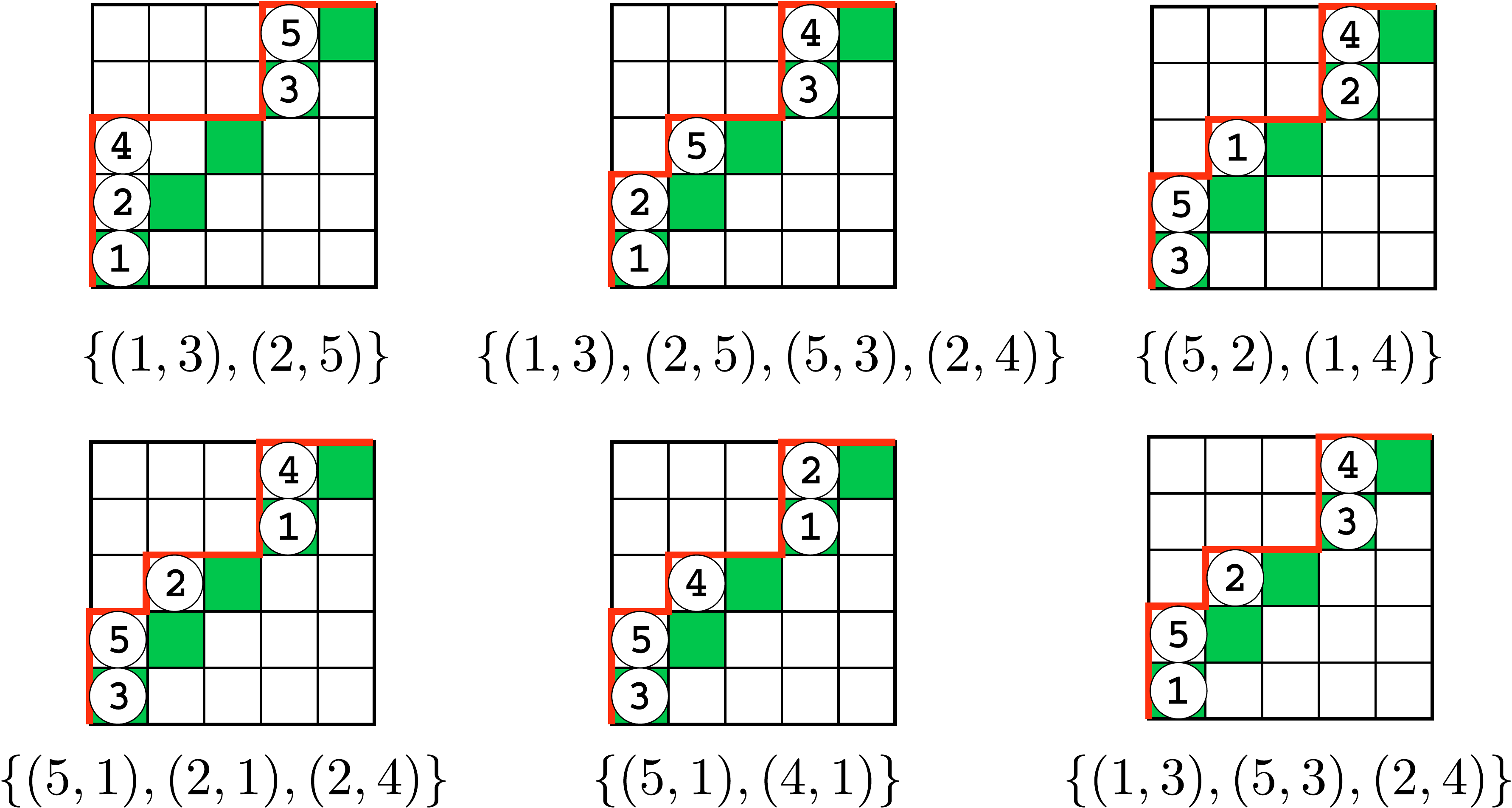}}}
$$
Thus Theorem \ref{thm:II.1} gives the equality
$$
\LL\nabla \BC_3\BC_2\, 1\scs e_1 h_2,h_2\RR\ses 
t^4q^2+t^3(q^4+2q^3+2q^2 )~.
$$

 
 Our proof of Theorem \ref{thm:II.1}, follows a similar path we used in \cite{GXZ:2011} and \cite{GXZ}. By means of  
a small collection of Macdonald polynomial identities 
(established much earlier in  \cite{BGHT:1999} and \cite{GHT:1999}) we prove a recursion satisfied by the 
left hand side of \eqref{eq:II.11}. We then show that the right hand side satisfies the same
recursion, with equality in the base cases. 

Setting, for a composition $\aa=(\aa_1,\aa_2,\ldots , \aa_\ell)$
$$
\BC_\aaa\ses \BC_{\aa_1}\BC_{\aa_2}\cdots \BC_{\aa_\ell}
$$
this  recursion,  which is the crucial result of this paper, 
may be stated as follows.
\sas

\begin{theorem}\label{thm:II.2}
Let and $a,b,c,m,n \in {\BZ}$ such that
$a+b+c = m + n$ and $\alpha \models n$.  If $m> 1$, then
\begin{align}
\LL \nabla \BC_m \BC_\alpha 1,   e_a h_bh_{c} \RR \, =\, 
 t^{m-1} &q^{\ell(\alpha)} \sum_{\beta \models m-1} 
\LL \nabla \BC_ \alpha\BC_\beta 1,  e_{a-1} h_{b } h_{c }  \RR\nonumber\\
 &+  t^{m-1} q^{\ell(\alpha)} \sum_{\beta \models m-2}    
\LL\nabla \BC_ \alpha\BC_\beta 1,  e_{a}  h_{b-1} h_{c-1}  \RR.
\label{eq:II.17}
\end{align}
If $m=1$, then 
\begin{align}
\LL \nabla \BC_1\BC_\alpha , e_a h_b h_c \RR = 
q^{\ell(\alpha)}  
\LL \nabla \BC_{\alpha} 1,  & e_{a-1} h_b h_{c}\RR +\LL \nabla \BC_{\alpha} 1,e_a h_{b-1} h_{c}+e_{a} h_b h_{c-1}\RR\nonumber\\
& + (q-1)\sum_{i:\alpha_i = 1} q^{i-1} \LL \nabla \BC_{{\widehat \alpha}^{(i)} }1, e_a h_{b-1} h_{c-1}\RR
\label{eq:II.18}
\end{align}
where  ${\widehat \alpha}^{(i)} = 
(\alpha_1, \alpha_2, \ldots, \alpha_{i-1}, \alpha_{i+1}, \ldots, \alpha_{\ell(\alpha)})$.
\end{theorem}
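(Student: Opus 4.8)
The plan is to derive the recursion \eqref{eq:II.17} and \eqref{eq:II.18} purely on the symmetric-function side, by manipulating the scalar product $\LL \nabla \BC_m \BC_\alpha 1, e_a h_b h_c\RR$ using the Macdonald-polynomial identities from \cite{BGHT:1999} and \cite{GHT:1999}. The starting point is the defining plethystic formula \eqref{eq:II.9} for $\BC_m$; the key structural fact I would exploit is the ``commutation'' or ``$t$-shift'' behavior of $\BC_m$ under $\nabla$, i.e.\ identities expressing $\nabla \BC_m P$ in terms of $\BC_\beta$ applied to $\nabla$-images of $P$, together with a formula for how $\BC_m$ acts when the operator is pushed past the dual basis element in the scalar product. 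Concretely, I would first rewrite $\LL \nabla \BC_m \BC_\alpha 1, e_ah_bh_c\RR$ by moving the operator $\BC_m$ onto the second argument of the Hall inner product, obtaining $\LL \nabla \BC_\alpha 1, \BC_m^* (e_a h_b h_c)\RR$ where $\BC_m^*$ is the Hall-scalar-product adjoint; the whole game is then to compute $\BC_m^*(e_a h_b h_c)$ in terms of $e_{a-1}h_bh_c$, $e_ah_{b-1}h_{c-1}$, etc., and to absorb the $\nabla$-conjugation into a replacement of the single part $m$ by a sum over compositions $\beta \models m-1$ or $\beta \models m-2$ with the indicated $t^{m-1}q^{\ell(\alpha)}$ prefactors.

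The key steps, in order, would be: (1) record the needed lemmas about $\BC_m$ — its adjoint $\BC_m^*$ with respect to $\LL\,,\,\RR$, the plethystic action on products $h$ and $e$, and the ``$\nabla \BC_m = (\text{shift}) \sum_\beta \BC_\beta \nabla$''-type identity that converts a single-part operator into a sum over compositions while producing the $t^{m-1}q^{\ell(\alpha)}$ factor (this is where $\ell(\alpha)$ enters, from counting the parts that get ``passed through''); (2) expand $e_a h_b h_c$ via the plethystic substitution $X \mapsto X - \tfrac{1-1/q}{z}$ demanded by \eqref{eq:II.9}, using $e_a[X-u] = \sum_j e_{a-j}[X](-u)^j/\!\cdots$ and the analogous $h$-expansions, then extract the coefficient of $z^m$; the terms surviving are exactly the ones hitting $e_{a-1}$ (one $e$-box removed) and $e_a h_{b-1}h_{c-1}$ (one box removed from each $h$), giving the two sums in \eqref{eq:II.17}; (3) treat $m=1$ separately, since then ``$\beta \models m-1$'' degenerates to the empty composition and ``$\beta \models m-2$'' is empty, so the recursion \eqref{eq:II.18} must instead pick up the lower-order plethystic corrections — the $h_{b-1}h_c + h_bh_{c-1}$ term and, crucially, the $(q-1)\sum_{i:\alpha_i=1} q^{i-1} \LL \nabla \BC_{\widehat\alpha^{(i)}}1,\dots\RR$ term, which comes from the interaction of $\BC_1$ with the $\BC_1$'s already present in $\BC_\alpha$ (a single box can be ``created and annihilated'' against an existing part of size $1$, at a cost tracked by $q^{i-1}$ from its position). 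The factor $q-1$ and the position weight $q^{i-1}$ are the fingerprints of the $1-1/q$ in \eqref{eq:II.9} combined with the operator-ordering; I would get these from the precise $\BC_1\BC_1$ and $\BC_1\BC_j$ commutation relations in \cite{BGHT:1999,GHT:1999}.

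I expect the main obstacle to be step (1)/(3): assembling exactly the right form of the $\BC_m$-commutation identities and, above all, pinning down the $m=1$ anomaly with the correct constant $q-1$ and the correct position-dependent weight $q^{i-1}$ running over $i$ with $\alpha_i=1$. The ``generic'' $m>1$ case is, I anticipate, a reasonably mechanical plethystic extraction once the lemmas are in place, but the $m=1$ case requires carefully separating the ``regular'' contribution from the degenerate one where the newly introduced box collides with a pre-existing part of size $1$ in $\alpha$; getting the bookkeeping of signs, the $q$-powers from operator positions, and the passage from $\BC_\alpha$ to $\BC_{\widehat\alpha^{(i)}}$ all consistent is the delicate part. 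A secondary obstacle is verifying that no other terms survive the $z^m$ extraction — i.e.\ that contributions removing two boxes from a single $h_b$, or removing boxes from both an $e$ and an $h$ simultaneously, either vanish or recombine — which I would handle by a careful degree/coefficient count in the generating variable $z$ together with the known vanishing properties of $\nabla$ on the relevant low-degree symmetric functions.
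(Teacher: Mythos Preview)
Your outline has the right overall shape but misses the crucial mechanism, and one early step is actually incorrect.

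First, the move $\LL \nabla \BC_m \BC_\alpha 1, e_a h_b h_c\RR = \LL \nabla \BC_\alpha 1, \BC_m^*(e_a h_b h_c)\RR$ is not valid: $\nabla$ does not commute with $\BC_m$, and $\nabla$ is \emph{not} self-adjoint for the Hall scalar product. The paper handles this by passing to the star scalar product $\LL\,,\,\RR_*$, with respect to which $\nabla$ \emph{is} self-adjoint, and rewriting $e_a h_b h_c$ as $\omega\phi(h_a^* e_b^* e_c^*)$. One is then left to compute $\BC_m^{*}\nabla\, h_a^* e_b^* e_c^*$ (the star-adjoint, with $\nabla$ trapped in the middle), and this is a genuinely hard symmetric-function computation carried out via Macdonald expansions and the Pieri summation formulas --- not a direct plethystic extraction from $e_a h_b h_c$.

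Second, and more importantly, your plan has no mechanism that produces the sums $\sum_{\beta\models m-1}$ and $\sum_{\beta\models m-2}$. There is no identity of the form ``$\nabla \BC_m = t^{m-1}\sum_\beta \BC_\beta \nabla$''. In the paper these sums arise from the companion operators $\BB_a$ of \cite{HMZ:2012}: one proves the symmetric-function identity
\[
\BC_m^{*}\nabla\, h_a^* e_b^* e_c^* \;=\; t^{m-1}\BB_{m-1}^{*}\nabla\, h_{a-1}^* e_b^* e_c^* \;+\; t^{m-1}\BB_{m-2}^{*}\nabla\, h_a^* e_{b-1}^* e_{c-1}^* \;+\; \chi(m=1)(\cdots),
\]
and only then the sums over compositions appear, via $\BB_{k}\,1=e_k=\sum_{\beta\models k}\BC_\beta 1$. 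The factor $q^{\ell(\alpha)}$ comes from commuting $\BB_{m-1}$ (resp.\ $\BB_{m-2}$) past $\BC_\alpha$ using $\BB_a\BC_b=q\,\BC_b\BB_a$ for $a+b>0$. Without the $\BB$ operators you have no way to package the right-hand side.

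Finally, the $m=1$ anomaly does not come from a $\BC_1\BC_1$ commutation. It comes from the failure of $\BB_a\BC_b=q\,\BC_b\BB_a$ when $a+b\le 0$: specifically $\BB_{-1}\BC_1 = q\,\BC_1\BB_{-1} + (q-1)I$, while $\BB_{-1}\BC_b = q\,\BC_b\BB_{-1}$ for $b\ge 2$. Pushing $\BB_{-1}$ through $\BC_\alpha$ and using $\BB_{-1}1=0$ is exactly what yields $(q-1)\sum_{i:\alpha_i=1} q^{i-1}\BC_{\widehat\alpha^{(i)}}1$. Your step (2), extracting $z^m$ from the plethystic expansion of $e_a h_b h_c$ at $X-\tfrac{1-1/q}{z}$, would produce many cross terms (e.g.\ $e_{a-1}h_{b-1}h_c$, $e_{a-2}h_b h_c$, \dots) and gives no reason why only the two displayed combinations survive; the cancellations are mediated by $\nabla$ and the Macdonald Pieri sums, not by a bare coefficient extraction.
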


What is  different in 
this case in contrast with the developments in \cite{GH:2002}, \cite{GXZ:2011} and \cite{Haglund:2004},  is that there
the combinatorial side suggested the recursion that both sides had to satisfy.
By contrast in the present case we could not have even remotely come up with
the combinatorics that unravels out of the above recursion. 
In fact we shall see that \eqref{eq:II.18} will guide us,  
in a   totally unexpected manner, to carry out some remarkable 
inclusion-exclusions sieving of parking functions to prove the right hand side of \eqref{eq:II.11} satisfies this recursion.
\sa

We must also mention that Theorem \ref{thm:II.1}  may also be viewed  as a path  
result with the same flavor as Haglund's Schr\"oder result \cite{Haglund:2004}. 
This is simply obtained by converting each of our parking functions into a lattice path with the following 5 steps
\vskip -.2in
\begin{equation}
\vcenter{\hbox{\includegraphics[width=2.4in]{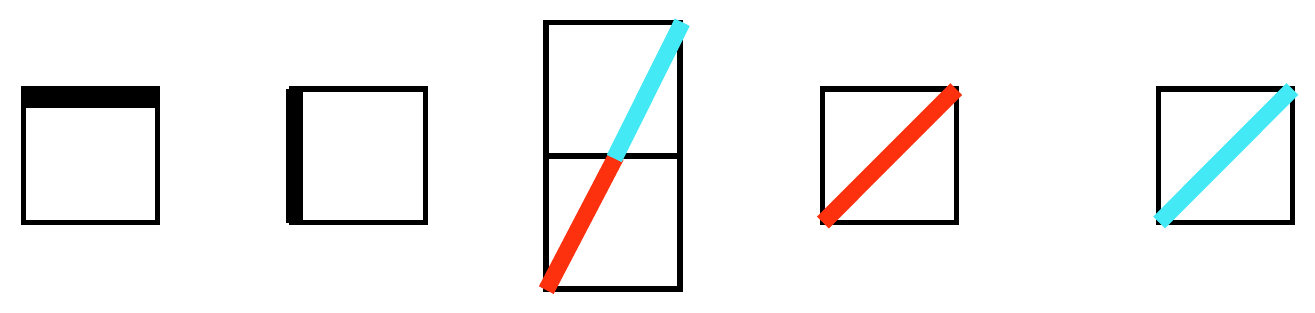}}}
\label{eq:II.19}
\end{equation} 
always remaining weakly above the main diagonal and touching it according to the
corresponding composition. The coefficient $\LL \nabla \BC_\alpha 1, e_a h_b h_c \RR$ is equal to the number of paths from $(0,0)$ to
$(n,n)$ using the 5 steps shown in the figure above
with $a$ vertical black steps, $b$ red steps and $c$ blue steps.
Areas and dinvs of our parking functions can easily
be converted  into  geometric properties  of the corresponding path.
This correspondence, which is dictated by the recursion in \eqref{eq:II.17}, 
 is obtained as follows. For convenience we will  refer to cars in the words 
 $\DA E_1$, $E_2$ and $E_3$ as \asmall,  \middle~ and 
\abig, or briefly as $S,M,B$. 
This given, the path corresponding to one of our parking functions is obtained by
deforming its supporting Dyck path according to the following rules:
\sas

\begin{enumerate}
\item {\it Every East step remains an east step.}
\item {\it  Every North step adjacent to an $S$  remains an North step.}
\item {\it  Every pair of successive North steps adjacent to a pair  $\begin{array}{c}B\\ M
\end{array}$ 
is replaced by the slope 2 step in \eqref{eq:II.19}.}
\item {\it The remaining North steps are replaced by the two slope 1 steps
in \eqref{eq:II.19}, the red  one  if adjacent to an $M$ and the blue  one if adjacent to a  $B$.}
\end{enumerate}

\noindent
In our example above the only $S$ is $1$ the $M's$ are $2,3$ and the $B's$ are $4,5$. Using the above rules these  parking functions   convert into the following six paths.
$$
\vcenter{\hbox{\includegraphics[width=5in]{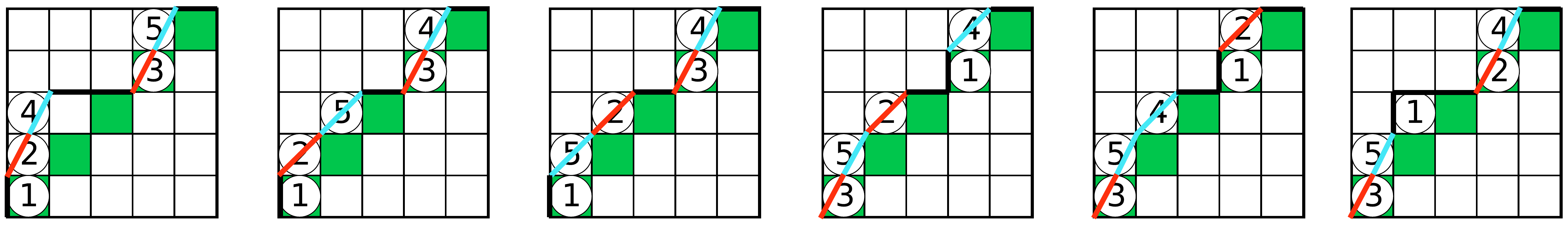}}}
$$

We have divided our presentation into three sections. In the first section we list the Macdonald polynomials identities  we plan to  use, referring to previous publications for their proofs. Next we give  proofs of some identities specifically  derived for our  present needs. This section also includes an outline of the symmetric function 
manipulations we plan to use to prove Theorem \ref{thm:II.2}. 

The second section is totally devoted to the proof of Theorem \ref{thm:II.2}. This is the most technical part of the paper, and perhaps the most suggestive of directions for  future work in this subject.
We tried whenever possible to motivate some  of the steps but in ultimate analysis 
the majority of them was forced on us by the complexities of the problem.

In the third and final section we prove that the combinatorial side satisfies the same recursion and verify  the  identities that cover the base cases. This section makes lighter reading, with only prerequisite  the statements of Theorems \ref{thm:II.1} and  \ref{thm:II.2}.
 It can be read  before embarking in \hbox{section 2.}

\sas
\noindent
{\bf Acknowledgement}: The authors would like to acknowledge the contributions and helpful guidance of
Angela Hicks on the combinatorial part of this work.

\end{section}

\begin{section}{Auxiliary  identities and results.}

In this section, after a few necessary definitions,  we  will list the identities  and prove a few preliminary results that are needed in our arguments.
The identities have been stated and proved elsewhere  but we include them here as a sequence of propositions without proofs
for the benefit of  the reader.

The space of symmetric polynomials will be denoted $\Lambda$. 
    The subspace of
homogeneous symmetric polynomials of degree $d$ will be denoted by
$\Lambda^{=d}$.  We must refer to Macdonald's exposition on symmetric functions \cite{Mac:1995} for the basic identities
that we will use here.
We will seldom work with symmetric polynomials expressed in terms of variables but rather express them
in terms of one of the classical symmetric function bases:
{\it power}    $\{p_\mu\}_\mu$, {\it monomial}  $\{m_\mu\}_\mu$, {\it homogeneous}  $\{h_\mu\}_\mu$,
{\it elementary}  $\{e_\mu\}_\mu$, and {\it Schur}  $\{s_\mu\}_\mu$.

Let us  recall that the fundamental involution $\om$ may be defined by setting 
for the power basis indexed by  $\mu=(\mu_1,\mu_2,\ldots ,\mu_k)\part n$
\begin{equation}
\om p_\mu\ses (-1)^{n-k}p_\mu\ses  (-1)^{|\mu|-l(\mu) }p_\mu
\label{eq:2.1}
\end{equation}
where for any vector $\ess v=(v_1,v_2,\cdots,v_k)$ we set
 $
\ess |v|=\sum_{i=1}^k v_i 
$
and 
$ l(v)=k $.

In dealing with symmetric function identities, specially with those arising
in the theory of Macdonald polynomials, we find it convenient
and often indispensable to use 
plethystic notation. This device has a straightforward definition. We simply set for
any expression $E=E(t_1,t_2 ,\ldots )$ and any power symmetric function $p_k$,
\begin{equation}
p_k[E]\ses E( t_1^k,t_2^k,\ldots ).
\label{eq:2.2}
\end{equation}

In particular, if $E$ is the sum of a set of variables $E = X = x_1 + x_2 + x_3 + \cdots$,
then 
$$p_k[X] = x_1^k + x_2^k + x_3^k + \cdots$$
is the power sum symmetric function evaluated at those variables.

This given, for any symmetric function $F$ we set
\begin{equation}
F[E]\ses Q_F(p_1,p_2, \ldots )\Big|_{p_k\RA E(t_1^k,t_2^k,\ldots )}
\label{eq:2.3}
\end{equation}

where $Q_F$ is the polynomial yielding the expansion of $F$ in 
terms of the power basis.

In this notation, $-E$ does not, as one might expect, correspond to replacing each
variable in an  expression $E$ with the negative of that variable.
Instead, we see that \eqref{eq:2.3}  gives
\begin{equation}
p_k[-E]\ses -p_k[E].
\label{eq:2.4}
\end{equation}

However, we will still need to carry out ordinary changes 
of signs of  the variables, and we will achieve this
by the introduction of an additional symbol $\eee$ that this to be acted upon  just like any other variable which
however, outside of the plethystic bracket, is simply replaced by $-1$. 
For instance, these conventions give for $X_k=x_1+x_2+\cdots +x_n$,

\begin{equation}
p_k[ -\eee X_n]\ses  -\eee^k\sum_{i=1}^nx_i^k\ses  (-1)^{k-1} \sum_{i=1}^nx_i^k~. 
\label{eq:2.5}
\end{equation}  

As a consequence, for any symmetric function $F\in \Lambda$ and any expression $E$ we have
$
\om F[E] \ses F[-\eee E]~.$ 
In particular,  if $F\in \Lambda^{=k} $  we may also write  
\begin{equation}
F[-E]\ses  \eee^{-k }F[-\eee E]\ses (-1)^k \om F[ E].
\label{eq:2.7}
\end{equation}

We must also mention that the formal power series
\begin{equation}
\OM\ses exp\left(\sum_{k\ge 1}\frac{p_k}{k}\right)
\label{eq:2.8}
\end{equation}
combined with plethysic substitutions will  provide   
a powerful way of dealing with the many generating functions occurring in
our manipulations.

\noindent
\begin{minipage}{0.68 \textwidth}
\hskip .2in Here and after it will be convenient to 
identify partitions with their (French) Ferrers diagram. Given
a partition $\mu$ and a cell $c\in \mu$, Macdonald introduces four parameters
$l =l_\mu(c)$, $l'=l'_\mu(c)$, $a =a_\mu(c)$ and  $a'=a'_\mu(c)$ 
called 
{\it leg, coleg, arm } and {\it coarm } which  give the 
number of lattice cells of $\mu$  strictly  {\rm NORTH},  
{\rm SOUTH}, {\rm EAST } and  {\rm WEST } of $c$, (see adjacent figure).

Following Macdonald we will set
\begin{equation}
\bigsp n(\mu)\ses \sum_{c\in\mu} l_\mu(c)\ses \sum_{c\in\mu} l'_\mu(c)
\ses \sum_{i=1}^{l(\mu)} (i-1)\mu_i. 
\label{eq:2.9}
\end{equation} 
\end{minipage}
$
\vcenter{\hbox{\hskip .1in\includegraphics[width=1.8in]{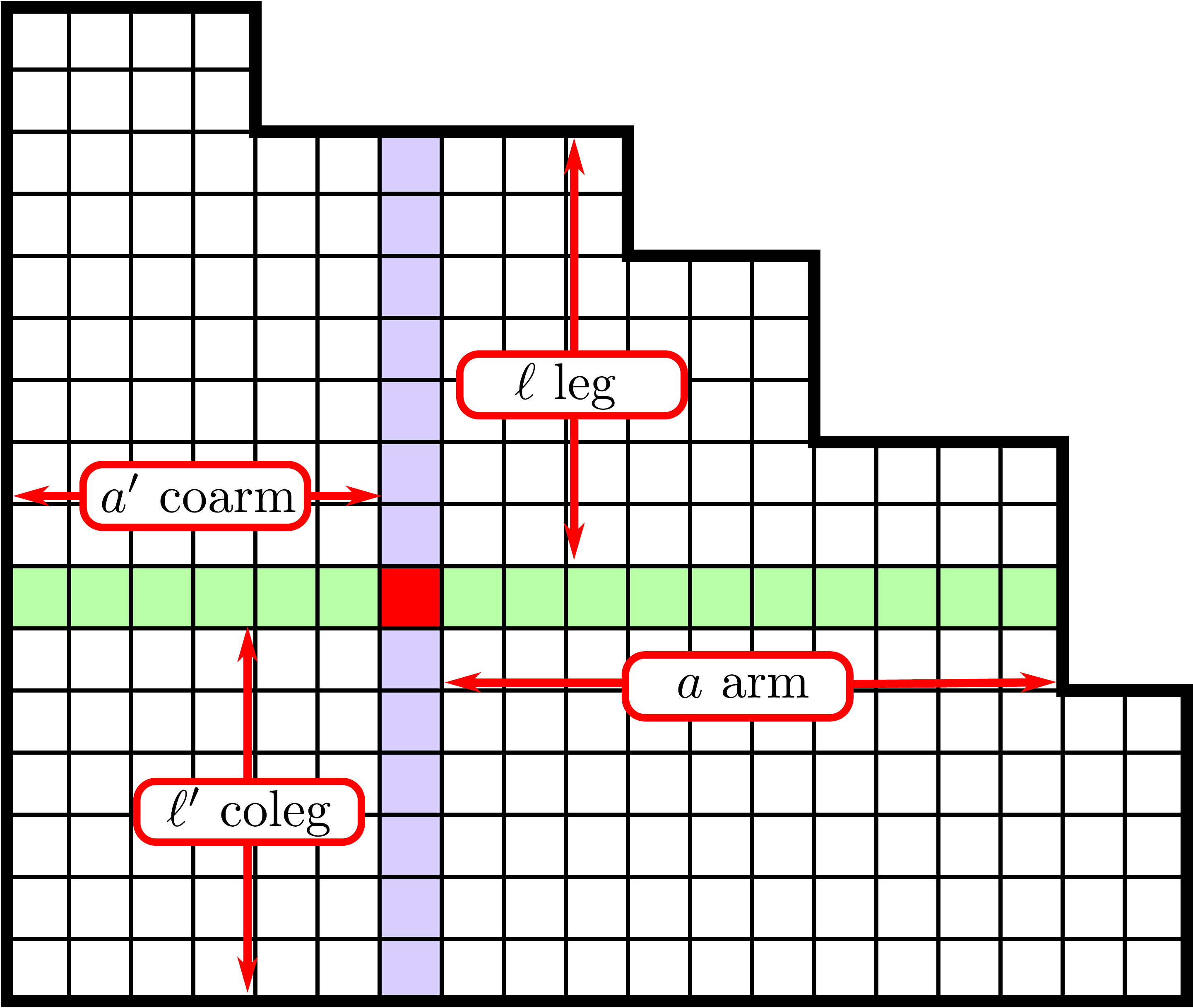}}}
$

Denoting by $\mu'$ the conjugate of $\mu$, 
the notational ingredients playing a role in the theory of Macdonald
polynomials are  

\begin{align}
T_\mu=t^{n(\mu)}q^{n(\mu')}~&,
\ess\ess B_\mu(q,t)  =\sum_{c\in \mu}t^{l'_\mu(c)} q^{a'_\mu(c)}~,\cr
\Pi_\mu(q,t)= \prod_{c\in \mu;c\neq(0,0)}(1-&t^{l'_\mu(c)}q^{a'_\mu(c)})~,\ess\ess M=(1-t)(1-q)~,\label{eq:2.10}\\
D_\mu(q,t)= MB_\mu(q,t)- 1 ~,\ess\ess&
 w_\mu(q,t)=\prod_{c\in \mu}(q^{a _\mu(c)} -t^{l _\mu(c)+1})(t^{l _\mu(c)} -q^{a _\mu(c)+1}). \nonumber
\end{align}
We will also use  a deformation of the Hall scalar product, which we call
the {\it star} scalar product,   defined by setting for the power basis 
$$
\LL p_\la\scs p_\mu \RR_*\ses 
(-1)^{|\mu|-l(\mu)} \prod_i (1-t^{\mu_i})(1-q^{\mu_i})\ssp z_\mu\ssp
\chi(\la=\mu) ,
$$
where $z_\mu$ gives the order of the stabilizer of a permutation with cycle structure $\mu$.
\sa

This given, the modified Macdonald Polynomials we will deal with here are the unique 
symmetric function basis $\big\{\TH_\mu(X;q,t)\big \}_\mu$ which  is
upper triangularly related to the basis $\{s_\la[\frac{X}{t-1}]\}_\la$
and satisfies the orthogonality condition
\begin{equation}
\LL \TH_\la\scs \TH_\mu\RR_*\ses \chi(\la=\mu) w_\mu(q,t)~.
\label{eq:2.11}
\end{equation}

In this writing we will make intensive use of the operator $\nabla$ 
defined by setting for all partitions $\mu$
$$
\nabla \TH_\mu \ses T_\mu \TH_\mu.
$$

The following identities will play a crucial role in our present developments. 
Their proofs can be found in \cite{BGHT:1999}, \cite{GH:1996} and \cite{GHT:1999}.


\begin{prop}(Macdonald's Reproducing Kernel)\label{prop:2.1}
The orthogonality relations in \eqref{eq:2.11} yield the Cauchy identity
for our Macdonald polynomials in the form
$$
\OM\left[ \frac{-\eee XY}{M} \right]\ses \sum_{\mu}
\frac{\TH_\mu[X]\TH_\mu[Y]}{w_\mu}
$$
which restricted to its homogeneous component of degree $n$ in $X$ and $Y$ 
reduces to
\begin{equation}
e_n\left[  \frac{XY}{M} \right]\ses \sum_{\mu\part n}
\frac{\TH_\mu[X]\TH_\mu[Y]}{w_\mu}~.
\label{eq:2.12}
\end{equation}
\end{prop}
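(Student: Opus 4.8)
The plan is to recognize both sides of \eqref{eq:2.12} as the reproducing kernel of the star scalar product on the finite dimensional space $\Lambda^{=n}$, and then to invoke the uniqueness of such a kernel. Everything is to be read degree by degree in $X$ and in $Y$, so every series that appears is in fact a finite sum, and the first displayed identity follows from \eqref{eq:2.12} simply by summing over $n$.

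First I would dispose of the right hand side. Since $\{\TH_\mu\}_{\mu\part n}$ is a linear basis of $\Lambda^{=n}$ and \eqref{eq:2.11} says $\LL\TH_\la\scs\TH_\mu\RR_*=\chi(\la=\mu)\,w_\mu$, the element $K_n(X,Y):=\sum_{\mu\part n}\TH_\mu[X]\TH_\mu[Y]/w_\mu$ satisfies the reproducing identity $\LL F[X]\scs K_n(X,Y)\RR_{*,X}=F[Y]$ for every $F\in\Lambda^{=n}$; indeed this only needs to be checked on $F=\TH_\la$, where it is immediate from orthogonality.

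Next I would show that the left hand side has the same reproducing property. Starting from $\OM[Z]=\sum_{k\ge 0}h_k[Z]$, the rule $\om F[E]=F[-\eee E]$ together with $\om h_k=e_k$ gives $\OM[-\eee XY/M]=\sum_{n\ge 0}e_n[XY/M]$, so it is enough to work with $\OM[-\eee XY/M]$ all at once. From $M=(1-t)(1-q)$ one has $p_k[M]=(1-t^k)(1-q^k)$, and the $\eee$-convention in \eqref{eq:2.5} gives
$$
p_k\!\left[\frac{-\eee XY}{M}\right]\ses\frac{(-1)^{k-1}\,p_k[X]\,p_k[Y]}{(1-t^k)(1-q^k)}\, ;
$$
expanding the exponential \eqref{eq:2.8} then yields
$$
\OM\!\left[\frac{-\eee XY}{M}\right]\ses\sum_{\la}\frac{(-1)^{|\la|-l(\la)}}{z_\la\,\prod_i(1-t^{\la_i})(1-q^{\la_i})}\;p_\la[X]\,p_\la[Y].
$$
Pairing $p_\mu[X]$ against this expansion via the defining formula $\LL p_\la\scs p_\mu\RR_*=(-1)^{|\mu|-l(\mu)}\prod_i(1-t^{\mu_i})(1-q^{\mu_i})\,z_\mu\,\chi(\la=\mu)$, every scalar factor cancels and one gets $\LL p_\mu[X]\scs\OM[-\eee XY/M]\RR_{*,X}=p_\mu[Y]$. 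By linearity the degree $n$ component $e_n[XY/M]$ satisfies $\LL F[X]\scs e_n[XY/M]\RR_{*,X}=F[Y]$ for all $F\in\Lambda^{=n}$.

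Finally, since $\LL\cdot\scs\cdot\RR_*$ is nondegenerate on $\Lambda^{=n}$ (it is diagonal in the power basis, with nonzero entries), its reproducing kernel is unique: if $\LL F[X]\scs L\RR_{*,X}=0$ for all $F\in\Lambda^{=n}$, then expanding $L$ in the power basis in $X$ and pairing against each $p_\mu[X]$ forces $L=0$. Hence $e_n[XY/M]=K_n(X,Y)$, which is \eqref{eq:2.12}, and summing over $n$ recovers the first display. I do not expect a genuine obstacle here; the only points demanding care are the sign and $\eee$ bookkeeping in evaluating $p_k[-\eee XY/M]$, the identity $p_k[M]=(1-t^k)(1-q^k)$, and the standing convention of working in a fixed homogeneous degree so that all sums are finite.
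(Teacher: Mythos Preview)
Your argument is correct and complete: you expand $\OM[-\eee XY/M]$ in the power basis, verify that both it and $\sum_{\mu\part n}\TH_\mu[X]\TH_\mu[Y]/w_\mu$ reproduce symmetric functions under $\LL\cdot,\cdot\RR_*$, and conclude by uniqueness of the reproducing kernel on the finite-dimensional space $\Lambda^{=n}$. The sign and plethystic bookkeeping (in particular $p_k[M]=(1-t^k)(1-q^k)$ and the $\eee$-calculus giving $(-1)^{k-1}$) are handled correctly.

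There is nothing to compare against in the paper itself: Proposition~\ref{prop:2.1} is explicitly listed among the auxiliary identities that the authors state ``as a sequence of propositions without proofs,'' referring instead to \cite{BGHT:1999}, \cite{GH:1996}, \cite{GHT:1999}. Your proof is precisely the standard derivation those references contain, and it matches the hint embedded in the statement (``The orthogonality relations in \eqref{eq:2.11} yield the Cauchy identity\ldots'').
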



\begin{prop}\label{prop:2.2}
For all pairs of partitions $\aaa,\bbb$ we have
\begin{equation}
a)\ess\ess\frac{\TH_\aaa[M B_\bbb ] }{  \Pi_\aa}
\ses 
\frac{\TH_\bbb[ M B_\aaa ]}{\Pi_\bb}
\scs \bigsp
b)\ess\ess(-1)^{|\aaa|}\frac{\TH_\aaa[D_\bbb ]}{T_\aaa}
\ses 
(-1)^{|\bbb|}\frac{\TH_\bbb[ D_\aaa ]}{T_\bb}
\label{eq:2.13}
\end{equation}
\end{prop}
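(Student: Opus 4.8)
The two identities in \eqref{eq:2.13} are the modified Macdonald incarnations of Macdonald's reciprocity (symmetry) theorem, and the plan is to reduce each of them to that classical statement and then carry out the bookkeeping of the various $q,t$-normalizations. The first observation is that $a)$ is equivalent to the bare assertion that the rational function $\TH_\aaa[MB_\bbb]/\Pi_\aaa$ is symmetric under $\aaa\leftrightarrow\bbb$, and that it already contains, as its simplest instance (the case $\bbb=(1)$, where $MB_{(1)}=M$ and $\TH_{(1)}=e_1$), the evaluation
\begin{equation}
\TH_\mu[M]\ses MB_\mu\,\Pi_\mu ,
\label{eq:plan.1}
\end{equation}
which is itself just the principal specialization of Macdonald's integral form $J_\mu$, repackaged by the $q,t$-hook formula. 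One should keep \eqref{eq:plan.1} in mind both as a sanity check and as the seed of the limiting argument below.

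To prove $a)$ I would invoke the standard dictionary $\TH_\mu[X;q,t]=t^{n(\mu)}J_\mu\big[\frac{X}{1-1/t};q,1/t\big]$ relating $\TH_\mu$ to Macdonald's integral form, together with Macdonald's reciprocity (see \cite{Mac:1995}, Chapter VI): for partitions with at most $N$ parts, the quotient of $P_\mu$ evaluated at the ``$q^\nu t^\delta$'' principal-type specialization by $P_\mu$ evaluated at the ``$t^\delta$'' specialization is symmetric in $\mu\leftrightarrow\nu$. Under the change of variables above, the denominator becomes, after clearing the usual $c_\mu$ and $b_\mu$ factors, exactly $\Pi_\mu$, while, letting $N\to\infty$, the numerator specialization packages the data of $\nu$ into $B_\nu(q,t)$ and the remaining plethystic prefactor into $MB_\nu$; hence the symmetric quotient becomes precisely $\TH_\mu[MB_\nu]/\Pi_\mu$. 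The only real labor is tracking the $t^{n(\mu)}$, $c_\mu$ and $b_\mu$ factors through the limit $N\to\infty$; this is routine but lengthy, which is why the identity is quoted from \cite{GHT:1999}. It is worth stressing that the purely symmetric-function route through the reproducing kernel of Proposition~\ref{prop:2.1} does not suffice on its own: substituting $Y\to MB_\nu$ there only produces $e_n[XB_\nu]=\sum_\mu\TH_\mu[X]\TH_\mu[MB_\nu]/w_\mu$, and any Cauchy-type expansion ultimately yields only the sums $\sum_\mu(\cdots)\TH_\mu[MB_\aaa]\TH_\mu[MB_\bbb]$, which are $\aaa\leftrightarrow\bbb$ symmetric for free and carry no information about the individual terms --- so Macdonald reciprocity is a genuine extra ingredient.

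For $b)$ I would first recast it, using the involution $\om$ and \eqref{eq:2.7}, in a form that makes its kinship with $a)$ transparent. Since $\TH_\mu$ is homogeneous of degree $|\mu|$ and $\om\,\TH_\mu[X;q,t]=T_\mu\,\TH_\mu[X;1/q,1/t]$, formula \eqref{eq:2.7} gives $\TH_\aaa[D_\bbb;q,t]=(-1)^{|\aaa|}T_\aaa\,\TH_\aaa[-D_\bbb;1/q,1/t]$, hence
$$(-1)^{|\aaa|}\,{\TH_\aaa[D_\bbb]\over T_\aaa}\ses\TH_\aaa\big[-D_\bbb(q,t)\,;\,1/q,1/t\big] ,$$
and, replacing $q$ by $1/q$ and $t$ by $1/t$ throughout, $b)$ becomes the assertion that $\TH_\aaa\big[\,1-M(1/q,1/t)\,B_\bbb(1/q,1/t)\,;\,q,t\,\big]$ is symmetric in $\aaa\leftrightarrow\bbb$. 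Thus $D_\nu=MB_\nu-1$ is, up to the $(q,t)\mapsto(1/q,1/t)$ reflection just performed, the ``arm--leg reversed'' analogue of the alphabet $MB_\nu$ of $a)$, and the remaining step is a cell-by-cell complementation identity for the diagram of $\nu$: one rewrites $1-M(1/q,1/t)B_\nu(1/q,1/t)$ in terms of the $MB$-data of $\nu$ and of the transformation laws $M(1/q,1/t)=M/(qt)$ and $\Pi_\mu(1/q,1/t)=(-1)^{|\mu|-1}T_\mu^{-1}\Pi_\mu(q,t)$, after which the $\Pi$-normalized symmetry of $a)$ converts into the $T$-normalized symmetry of $b)$, the signs $(-1)^{|\aaa|}$ and $(-1)^{|\bbb|}$ coming out exactly as in \eqref{eq:2.13}. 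I expect this last step --- matching the $(q,t)\leftrightarrow(1/q,1/t)$-reflected data against $T_\mu$ and $D_\mu$, and passing from the $\Pi$-normalization to the $T$-normalization --- to be the main obstacle, since $B_\mu$ on its own has no clean behaviour under $(q,t)\mapsto(1/q,1/t)$ and one is forced to juggle the full combinations $D_\mu$, $\Pi_\mu$ and $T_\mu$ simultaneously; everything else reduces either to the quoted Macdonald reciprocity or to elementary plethystic manipulation.
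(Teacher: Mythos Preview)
The paper does not actually prove Proposition~\ref{prop:2.2}: as stated at the beginning of Section~2, these identities are quoted without proof from \cite{BGHT:1999}, \cite{GH:1996} and \cite{GHT:1999}. So there is no in-paper argument to compare against, only the cited sources.

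Your plan for $a)$ is exactly the route taken in those references: the identity is the modified-Macdonald repackaging of Koornwinder--Macdonald reciprocity, and the work is precisely the hook-factor bookkeeping you describe. Your side remark that the Cauchy kernel alone cannot give the termwise symmetry is also correct and worth making.

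Your plan for $b)$ has a genuine gap, and it is the one you yourself flag. After the $(q,t)\mapsto(1/q,1/t)$ substitution you are left with showing that $\TH_\aaa\big[\,1-\tfrac{M}{qt}\,B_\bbb(1/q,1/t)\,;\,q,t\,\big]$ is symmetric in $\aaa\leftrightarrow\bbb$, and the quantity $1-\tfrac{M}{qt}B_\bbb(1/q,1/t)$ simply does not rewrite as $MB_\gamma$ for any partition $\gamma$, nor does it differ from $MB_\bbb$ by anything that the $\Pi$-normalization of $a)$ can absorb. So there is no passage from $a)$ to $b)$ along this line; the two identities encode genuinely different specializations and one does not imply the other by a change of parameters. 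In the cited references $b)$ is proved independently, again from Koornwinder--Macdonald reciprocity but with a different principal specialization (one that produces the extra ``$-1$'' in $D_\bbb=MB_\bbb-1$); alternatively one can verify it by showing that both sides satisfy the same Pieri-type recursion in $\aaa$, using the summation formulas of Proposition~\ref{prop:2.3}. Either of those would close the argument; the $\omega$-trick as you have outlined it does not.
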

   
Let us also recall that the  coefficients  $c_{\mu\nu}$ and $d_{\mu\nu}$ occurring in the Macdonald Pieri formulas
\begin{equation}
a)\ess\ess e_1\TH_\nu\ses \sum_{\mu\LA \nu}d_{\mu\nu}\TH_\mu\scs
\ess\ess\ess\ess\ess\ess\ess\ess\ess
b)\ess\ess e_1^\perp\TH_\mu\ses \sum_{\nu\RA \mu}c_{\mu\nu}\TH_\nu\scs
\label{eq:2.15}
\end{equation}
are related by the identity
\begin{equation}
d_{\mu\nu}\ses M c_{\mu\nu} \frac{w_\nu}{w_\mu} ~.
\label{eq:2.16}
\end{equation}
This given, the following summations formulas proved in \cite{GH:1998} and \cite{Zab:2005} are also indispensable here.
\sas
\sas

\begin{prop}\label{prop:2.3}
For $\mu$ a partition of a positive integer,
\begin{equation}
\sum_{\nu\RA\mu}c_{\mu\nu}(q,t)\, (T_\mu/T_\nu)^k\ses 
\begin{cases}
\frac{tq}{M}\ssp h_{k+1}\big[D_\mu(q,t)/tq\big] &\hbox{ if }k\geq 1\\
B_\mu(q,t) &\hbox{ if }k=0
\end{cases}
\label{eq:2.17}
\end{equation}
For $\nu$ a partition of a non-negative integer,
\def \mur {\leftarrow}
\begin{equation}
\sum_{\mu\mur\nu}d_{\mu\nu}(q,t)\, (T_\mu/T_\nu)^k = 
\begin{cases}
(-1)^{k-1}\ssp e_{k-1}\big[D_\nu(q,t) \big] &\hbox{ if } k\geq 1 \\
1 &\hbox{ if }k=0 .
\end{cases}
\label{eq:2.18}
\end{equation}
Here $\nu\RA \mu$ simply means that the sum is over  $\nu$'s   obtained 
from $\mu$ by removing a corner cell and  $\mu\LA \nu$ 
means  that the sum is over $\mu$'s   obtained 
from $\nu$ by adding a corner cell.
\end{prop}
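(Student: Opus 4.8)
The plan is to prove \eqref{eq:2.17} and \eqref{eq:2.18} by collecting each $k$-indexed family into a single rational function of an auxiliary variable $z$ and identifying that function through its poles, residues, and value at infinity. For a partition $\la$ let $\CA(\la)$ and $\CR(\la)$ denote its addable and removable corners, and for a cell $c$ (either of $\la$ or addable to $\la$) put $u_c=t^{l'_\la(c)}q^{a'_\la(c)}$, the content monomial. Removing a removable corner $r$ of $\mu$ (the move $\nu\RA\mu$) gives $T_\mu/T_\nu=u_r$, while adding an addable corner $a$ to $\nu$ (the move $\mu\LA\nu$) gives $T_\mu/T_\nu=u_a$. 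Multiplying by $z^k$, summing over $k\ge0$, and using $\sum_k h_k[A]z^k=\OM[zA]$ together with $\sum_j e_j[A](-z)^j=1/\OM[zA]$, the two assertions become the rational-function identities
\[
\sum_{r\in\CR(\mu)}\frac{c_{\mu,\mu-r}}{1-z\,u_r}\ses B_\mu+\frac{tq}{Mz}\Big(\OM\big[zD_\mu/tq\big]-1-z\,D_\mu/tq\Big)
\]
and
\[
\sum_{a\in\CA(\nu)}\frac{d_{\nu+a,\nu}}{1-z\,u_a}\ses 1+\frac{z}{\OM[zD_\nu]}.
\]
In each identity the left-hand side is a sum of simple fractions, hence regular at $z=0$ and $z=\infty$ with only simple poles at distinct points; I will check the right-hand side has the same structure, after which each is determined by its residues and its value at $\infty$, so it suffices to match those.

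The first ingredient is a closed product form for these plethystic exponentials, obtained by writing $B_\mu=\sum_{j\ge0}q^j(1-t^{\mu'_j})/(1-t)$, grouping by columns, and telescoping:
\[
\OM\big[zD_\mu/tq\big]\ses\frac{\prod_{a\in\CA(\mu)}(1-z\,u_a/tq)}{\prod_{r\in\CR(\mu)}(1-z\,u_r)}\qquad\OM\big[zD_\nu\big]\ses\frac{\prod_{a\in\CA(\nu)}(1-z\,u_a)}{\prod_{r\in\CR(\nu)}(1-z\,tq\,u_r)}.
\]
Thus the right-hand side of the first identity has simple poles exactly at $z=1/u_r$ ($r\in\CR(\mu)$), matching the left, and since $\OM[zD_\mu/tq]=1+z\,D_\mu/tq+O(z^2)$ it is regular at $z=0$; similarly for the second. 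For the value at infinity one uses the elementary corner facts $|\CA(\la)|=|\CR(\la)|+1$ and $\prod_{a\in\CA(\la)}u_a=\prod_{r\in\CR(\la)}(tq\,u_r)$, which give $\OM[zD_\mu/tq]\sim -z/tq$ and $\OM[zD_\nu]\sim -z$ as $z\to\infty$; combined with $B_\mu-D_\mu/M=1/M$ (immediate from $D_\mu=MB_\mu-1$) this forces both right-hand sides to vanish at $\infty$, as the left-hand sides do. Consequently each identity is reduced to the single task of matching residues, and once that is done the two rational functions coincide identically — in particular the cases $k=0$ emerge automatically as the equal values at $z=0$.

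The genuine work — and the step I expect to be the main obstacle — is the residue match. Equating residues at $z=1/u_r$ in the first identity forces the Pieri coefficient to equal the explicit corner product
\[
c_{\mu,\mu-r}\ses\frac{tq\,u_r}{M}\cdot\frac{\prod_{a\in\CA(\mu)}\big(1-u_a/(tq\,u_r)\big)}{\prod_{r'\in\CR(\mu),\,r'\ne r}\big(1-u_{r'}/u_r\big)},
\]
and equating residues in the second forces the companion formula for $d_{\nu+a,\nu}$. What remains is to prove that the true Macdonald Pieri coefficients really equal these products. The coefficients $c_{\mu\nu},d_{\mu\nu}$ are governed by Macdonald's Pieri rule, whose coefficients for the integral forms are explicit products of arm--leg factors over the row and column of the cell added or removed (Macdonald \cite{Mac:1995}, Ch.~VI). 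The task is to transport those formulas through the change of normalization to the modified basis $\{\TH_\mu\}$ — which inserts the factors $w_\mu$, $T_\mu$ and powers of $M$ recorded in \eqref{eq:2.10}--\eqref{eq:2.11} — and then to collapse the resulting arm--leg products down to the compact corner form above by the standard telescoping within a single row and column. I would first establish the $d$-formula, where the residue product is cleanest, and then deduce the $c$-formula from it through \eqref{eq:2.16}, $d_{\mu\nu}=Mc_{\mu\nu}w_\nu/w_\mu$, which converts one residue product into the other once the $w$-ratios are expanded against the arm--leg factors. With the residue formulas in hand, the pole, residue, and infinity matches established above complete the identification of both rational functions, and extracting $[z^k]$ recovers \eqref{eq:2.17} and \eqref{eq:2.18}.
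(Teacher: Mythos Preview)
The paper does not actually prove Proposition~\ref{prop:2.3}: it is listed among the identities ``stated and proved elsewhere'' and referred to \cite{GH:1998} and \cite{Zab:2005}. So there is no in-paper argument to compare against; I can only assess your proposal on its own merits and against what those references do.

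Your approach is correct and is essentially the one used in \cite{GH:1998}. Packaging the $k$-sums into generating functions in $z$, recognizing both sides as rational functions with simple poles at the content monomials of the removable (resp.\ addable) corners, and then matching residues plus the value at $\infty$ is exactly the partial-fraction mechanism behind these identities. Your product formulas for $\OM[zD_\mu/tq]$ and $\OM[zD_\nu]$ are the standard ``corner identity'' $D_\la = tq\sum_{r\in\CR(\la)}u_r - \sum_{a\in\CA(\la)}u_a$, and your checks of regularity at $0$ and vanishing at $\infty$ (via $|\CA(\la)|=|\CR(\la)|+1$, $\prod_a u_a=(tq)^{|\CR(\la)|}\prod_r u_r$, and $1+D_\mu=MB_\mu$) go through as stated.

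The only place where you defer real content is the residue step: you need the explicit corner-product expressions for $c_{\mu,\mu-r}$ and $d_{\nu+a,\nu}$. These are genuine identities, not formalities, but they are exactly what is established in \cite{GH:1998} (and rederived in \cite{Zab:2005}) by transporting Macdonald's Pieri coefficients through the normalization $\TH_\mu$ and telescoping the arm--leg products down the row and column of the corner. Your plan to prove the $d$-formula first and then deduce the $c$-formula from \eqref{eq:2.16} is sensible and is the cleaner order. Once those corner formulas are in hand, your Liouville-type argument (matching poles, residues, and behavior at $\infty$) uniquely pins down both rational functions, and the $k=0$ cases indeed fall out as the values at $z=0$. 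So your outline is sound; just be aware that the residue identities are precisely the substantive content of the cited references rather than a routine verification.
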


Recall that the Hall scalar product in the theory of Symmetric functions
may be defined by setting, for the power basis
\begin{equation}
\LL p_\la\scs p_\mu \RR \ses 
 z_\mu\ssp
\chi(\la=\mu) ~.
\label{eq:2.19}
\end{equation}
It follows from this that
the $*$-scalar product, is simply related to the   
Hall scalar product
by setting for all pairs of symmetric functions $f,g$,
\begin{equation}
\LL f\scs g\RR_* \ses \LL f\scs \om \phi g\RR,
\label{eq:2.20}
\end{equation}
where it has been customary to let $\phi$ be the operator
defined by setting for any symmetric function $f$
\begin{equation}
\phi\, f[X]\ses f[MX].
\label{eq:2.21}
\end{equation}
Note that the inverse of $\phi$ is usually written in the form
\begin{equation}
f^*[X]\ses f[X/M]~.
\label{eq:2.22}
\end{equation}
In particular we also have  for all symmetric functions $f,g$
\begin{equation}
\LL f\scs g\RR \ses \LL f, \om g^* \RR_* 
\label{eq:2.23}
\end{equation}

Note that the orthogonality relations in \eqref{eq:2.11} yield us 
the following  Macdonald polynomial expansions 

\begin{prop}\label{prop:2.4}
For all $n\ge 1$, we have
\begin{align}
&a)\ess\ess  
e_n\big[\tttt{}\frac{X}{M} \big]= \sum_{\mu\part n} \frac{\TH_\mu[X]}{w_\mu}~,\\
&b)\ess \ess 
h_n\big[\tttt{}\frac{X}{M} \big]= \sum_{\mu\part n} \frac{T_\mu \TH_\mu[X]}{w_\mu}~,\\
&c)\ess 
h_k\big[\tttt{}\frac{X}{M} \big] e_{n-k}\big[\tttt{}\frac{X}{M} \big] 
= \sum_{\mu\part n} \frac{e_k[B_\mu] \TH_\mu[X]}{w_\mu}~.
\end{align}
\end{prop}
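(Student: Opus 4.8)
The plan is to prove each of the three identities in Proposition~\ref{prop:2.4} by extracting coefficients against the Macdonald basis $\{\TH_\mu\}$ using the star scalar product and its orthogonality relation \eqref{eq:2.11}. Since $\big\{\TH_\mu[X]/w_\mu\big\}_{\mu\part n}$ and $\{\TH_\mu[X]\}_{\mu\part n}$ are dual bases of $\Lambda^{=n}$ with respect to $\LL\,\cdot\,,\,\cdot\,\RR_*$, it suffices, for each claimed expansion $F[X]=\sum_{\mu\part n}c_\mu\,\TH_\mu[X]/w_\mu$, to verify that $\LL F,\TH_\la\RR_*=c_\la$ for every $\la\part n$. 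So I would first record the elementary fact that $\LL f[X/M],\TH_\la\RR_*=\LL f,\TH_\la\RR$ for $f\in\Lambda^{=n}$; this follows directly from \eqref{eq:2.20}, \eqref{eq:2.21}, \eqref{eq:2.22}, since $\phi^{-1}$ applied to $\TH_\la$ inside the Hall pairing converts $\LL\,\cdot\,,\,\cdot\,\RR_*$ into $\LL\,\cdot\,,\,\cdot\,\RR$ up to the plethystic rescaling, and the $\om$ can be absorbed by the corresponding $\om$ in the definition. In fact the cleanest route is to observe that parts (a),(b),(c) are all instances of the single master identity obtained by expanding $e_n[XY/M]$ via the Cauchy formula \eqref{eq:2.12}: one specializes the $Y$ alphabet appropriately and reads off the coefficient of $\TH_\mu[X]/w_\mu$ as the value $\TH_\mu[Y^{\mathrm{spec}}]$.

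Concretely, for part (a) I would set $Y=M$ (i.e.\ substitute the alphabet $1-t-q+qt$) in \eqref{eq:2.12}; then the left side becomes $e_n[XM/M]=e_n[X]$ — wait, that is not what we want, so instead I would run the duality the other way: expand $e_n[X/M]$ directly. Write $e_n[X/M]=\sum_{\mu\part n}\LL e_n[X/M],\TH_\mu\RR_*\,\TH_\mu[X]/w_\mu$ by orthogonality, and compute the coefficient as $\LL e_n[X/M],\TH_\mu\RR_*=\LL e_n,\TH_\mu\RR=\TH_\mu[1]$ using the reduction above together with the standard fact that $\LL e_n,f\RR$ picks out the coefficient of $h_n$... more precisely $\LL e_n[X],\TH_\mu[X]\RR$ equals the coefficient when $\TH_\mu$ is expanded in the $m_\la$ basis paired against $h$; the slick statement is that $e_n\big[\tfrac{X}{M}\big]=\sum_\mu \TH_\mu[X]/w_\mu$ is exactly \eqref{eq:2.12} with the $Y$-alphabet set so that $\TH_\mu[Y]=1$ for all $\mu$, namely $Y$ a single variable $=1$ won't do it in general, but $XY/M$ with $Y$ chosen as the alphabet for which $\TH_\mu$ evaluates to $1$. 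The honest and safest approach, which I would actually carry out, is: in \eqref{eq:2.12} replace $X$ by $X$ and $Y$ by $M\cdot Z$ where $Z$ is a single variable set to $1$; then $e_n[XY/M]=e_n[XZ]\big|_{Z=1}=e_n[X]$ — still not it. I will therefore instead simply invoke the well-known evaluations $\TH_\mu[1]=1$, $\TH_\mu[1-u]\big|_{\text{appropriate}}$, and $e_k[B_\mu]=\TH_\mu[\,\cdot\,]$-type specializations, citing \cite{GH:1996} or \cite{Mac:1995}, and match coefficients: for (a), the coefficient of $\TH_\mu[X]/w_\mu$ in $e_n[X/M]$ is $\TH_\mu[1]=1$; for (b), the coefficient in $h_n[X/M]$ is $T_\mu$, which follows because $h_n[X/M]=\om\big(e_n[X/M]\big)$ composed with the $\om$-action that turns $w_\mu$-weighted $\TH_\mu$ into $T_\mu$-weighted ones via $\nabla$; for (c), the coefficient in $h_k[X/M]e_{n-k}[X/M]$ is $e_k[B_\mu]$, obtained from the Cauchy kernel by specializing $Y$ to a two-row rectangle alphabet and using $\TH_\mu[1+\cdots]$ generating-function identities for $B_\mu$.

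More carefully, the engine for all three is: substitute into the reproducing kernel \eqref{eq:2.12} a product alphabet $Y\mapsto Y_1+Y_2$ and use the multiplicativity $\OM[-\eee X(Y_1+Y_2)/M]=\OM[-\eee XY_1/M]\,\OM[-\eee XY_2/M]$ together with the degree-$n$ truncation; this yields $\sum_{k}h_k[X/M_?]e_{n-k}[\cdots]$ on one side and $\sum_\mu \TH_\mu[X]\TH_\mu[Y_1+Y_2]/w_\mu$ on the other. Choosing $Y_1,Y_2$ to be suitable single variables or $\{1\}$, $\{t q\}$, etc., and using $\TH_\mu[1]=1$, $\TH_\mu[1+z]=\prod_{c}(1+z t^{l'}q^{a'})$-type product formulas (these are in \cite{Mac:1995}), one reads off (a) by collapsing $Y_2\to 0$, (b) by the $\om$-dual specialization producing the $T_\mu$, and (c) by keeping both variables and extracting $e_k[B_\mu]=\sum_{|S|=k,\,S\subseteq\mu}\prod_{c\in S}t^{l'_\mu(c)}q^{a'_\mu(c)}$ from the product expansion of $\TH_\mu$ on a two-element alphabet. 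I would present (a) in full detail and then note that (b) follows by applying $\om$ and the identity $\om\TH_\mu[X]=(\,\cdot\,)$ relating to $T_\mu$, while (c) follows by the same coefficient extraction with one more parameter left free.

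The main obstacle I anticipate is purely bookkeeping: getting the plethystic signs and the $\eee$-bookkeeping right when passing between $e_n[X/M]$, $h_n[X/M]$, and the kernel $\OM[-\eee XY/M]$, and correctly identifying which specialization of the $Y$-alphabet produces $T_\mu$ versus $e_k[B_\mu]$ as the evaluation $\TH_\mu[Y^{\mathrm{spec}}]$. In particular one must be careful that $\TH_\mu[X]$ here is the \emph{modified} Macdonald polynomial (triangular against $s_\la[X/(t-1)]$), so its principal specializations differ by the usual $t$-to-$1/t$ and sign twists from the integral-form ones; I would pin these down once, using the normalization fixed by \eqref{eq:2.11} and the known value $\TH_\mu[1]=1$, and then the rest is mechanical. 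No genuinely new idea is needed beyond orthogonality plus the Cauchy kernel; the proposition is essentially a dictionary entry, and the proof is a short derivation from \eqref{eq:2.11} and \eqref{eq:2.12}.
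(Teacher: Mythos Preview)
The paper does not actually prove Proposition~\ref{prop:2.4}: it merely remarks that ``the orthogonality relations in \eqref{eq:2.11} yield us the following Macdonald polynomial expansions'' and defers to \cite{BGHT:1999}, \cite{GH:1996}, \cite{GHT:1999}. Your core idea --- use the Cauchy kernel \eqref{eq:2.12} and specialize the $Y$-alphabet so that the coefficient of $\TH_\mu[X]/w_\mu$ becomes the value $\TH_\mu[Y^{\mathrm{spec}}]$ --- is exactly the standard argument in those references, so in spirit you are aligned with the paper.

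That said, your sketch wanders through several false starts and contains one genuinely wrong assertion. The clean specializations you are groping for are these: for (a) set $Y=1$ in \eqref{eq:2.12} and use $\TH_\mu[1]=1$; for (b) set $Y=-1$ and use $h_n[X/M]=(-1)^n e_n[-X/M]$ together with $\TH_\mu[-1]=(-1)^{|\mu|}T_\mu$; for (c) set $Y=1-z$ (not $1+z$) in \eqref{eq:2.12}, invoke the product formula
\[
\TH_\mu[1-z]\;=\;\prod_{c\in\mu}\bigl(1-z\,t^{l'_\mu(c)}q^{a'_\mu(c)}\bigr)\;=\;\sum_{k\ge 0}(-z)^k\,e_k[B_\mu],
\]
and equate coefficients of $(-z)^k$ on both sides. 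Your sentence for (b), that it ``follows because $h_n[X/M]=\om(e_n[X/M])$ composed with the $\om$-action that turns $w_\mu$-weighted $\TH_\mu$ into $T_\mu$-weighted ones via $\nabla$,'' is not right: $\nabla$ plays no role here, and $\om$ applied in $X$ to $\TH_\mu[X]$ does not give $T_\mu\TH_\mu[X]$. Replace that passage with the $Y=-1$ specialization above and the three proofs become two or three lines each, with the only external input being the principal specializations $\TH_\mu[1]=1$ and $\TH_\mu[1-z]=\prod_{c\in\mu}(1-z\,t^{l'}q^{a'})$, both of which are standard and available in the cited references.
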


The following identity (proved in \cite{GHT:1999}) may sometimes provide an explicit  expression for the scalar product
of a Macdonald polynomial  with a symmetric polynomial which contains an $h$ factor.
More precisely, we have the following proposition.


\begin{prop}\label{prop:2.5}
 For all $f\in \Lambda^{= r}$ and $\mu\vdash n$ we have
$$
\langle f h_{n-r}\, , \, \tilde H_\mu\rangle \, =\, \nabla^{-1}\big(\omega 
f\left[\frac{X-\epsilon }{ M }\right]  \big)
\big|_{X\rightarrow MB_\mu-1}~.
$$
\end{prop}

For instance  for $f=e_r$ this identity combined with a) and  b) of Proposition \ref{prop:2.4} gives
\begin{align}
\LL \TH_\mu\scs e_rh_{n-r}\RR
&=
\nabla^{-1}h_r\left[\frac{X-\eee}{M}\right]\Big|_{X\RA MB_\mu-1}
= \sum_{i=0}^re_{r-i}\left[\frac{1}{M}\right]
\nabla^{-1}h_i\left[\frac{X}{M}\right]\Big|_{X\RA MB_\mu-1}
\cr
 &= \sum_{i=0}^re_{r-i}\left[\frac{1}{M}\right]
\sum_{\ggg\part i } \frac{ \TH_\ggg [MB_\mu-1]}{w_\ggg}
=
\sum_{i=0}^re_{r-i}\left[\frac{1}{M}\right]
e_i\left[\frac{  {MB_\mu-1}}{ M}  \right] 
\ses e_r[B_\mu]~.
\end{align}
We thus have
\begin{equation}
\LL  \TH_\mu\scs e_rh_{n-r}\RR\ses e_r[B_\mu]~.
\label{eq:2.24}
\end{equation}
This is an identity  which will provide an important step in the proof of Theorem \ref{thm:II.2}.
\sas

In addition to the $\BC_a$ operator given in \eqref{eq:II.9} which we recall acts on a symmetric polynomial according to the plethystic formula
\begin{equation}  
 \BC_a P[X]\ses \left(-\frac{1}{q}\right)^{a-1} P\left[X-\frac{1-1/q }{ z}
\right]\displaystyle
\sum_{m\ge 0}z^m h_{m}[X]\, \Big|_{z^a},
\label{eq:2.25}
\end{equation}
Haglund-Morse-Zabrocki in  \cite{HMZ:2012} introduce also the $\BB_a$ operator obtained by setting
\begin{equation}
 \BB_a  P[X]\ses  P\left[  X+\eee  \frac{1-q }{ z}\right]
\sum_{m\ge 0}\,z^m e_m[X] \Big|_{z^a}~.
\label{eq:2.26}
\end{equation}
They showed there that these operators, for $a+b>0$ have the commutativity relation
\begin{equation}
\BB_a\BC_b\ses q \, \BC_b \BB_a~.
\label{eq:2.27}
\end{equation}
  However,  we will 
need here the  more refined identity that  yields the interaction of the  $\BB_{-1}$
and $\BC_1$ operators.  The  following identity  also 
explains what happens  when $a+b< 0$.

\begin{prop}\label{prop:2.6}
For all pairs of integers $a,b$   we have
\begin{equation}
 \ess\big( q\,  \BC_b \BB_a - \BB_a\TBH_b\big) P[X]
\, =\, 
 (q-1)(-1)^{a+b-1}/q^{b-1} \times \begin{cases}
0 &\hbox{ if }a+b>0\\
P[X]&\hbox{ if }a+b=0\\
P [X + (\tttt{}\frac{1}{q}-q)/z ] \Big|_{z^{a+b}}~.
&\hbox{ if }a+b<0~.
\end{cases}
\label{eq:2.28}
\end{equation}
\end{prop}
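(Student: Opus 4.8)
The plan is to prove Proposition~\ref{prop:2.6} by unwinding the plethystic definitions \eqref{eq:2.25} and \eqref{eq:2.26} of $\BC_b$ and $\BB_a$, composing them in both orders, and tracking the discrepancy via generating functions in auxiliary variables. Concretely, introduce two independent formal parameters $z$ and $w$: write $\BB_a P[X]$ as the coefficient of $w^a$ in $P\big[X+\eee\tfrac{1-q}{w}\big]\sum_{m\ge 0}w^m e_m[X]$, and then apply $\BC_b$ (with parameter $z$) to the result, extracting $z^b$ at the end. The key technical point is that $\BC_b$ and $\BB_a$ each act by a plethystic shift of the argument combined with multiplication by a generating series ($\OM$-type factor in disguise, since $\sum_m z^m h_m[X]=\OM[zX]$ and $\sum_m w^m e_m[X]=\OM[\eee wX]$). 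So both $q\,\BC_b\BB_a P$ and $\BB_a\BC_b P$ become, before coefficient extraction, an expression of the shape $P\big[X+(\text{shift in }z)+(\text{shift in }w)\big]$ times a product of four $\OM$ factors coupling $X$, $z$, $w$ to each other. The only place the two orders can differ is in the cross term $\OM$-factor that pairs the $z$-shift of one operator against the $w$-series of the other.

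First I would record the two composite kernels explicitly. Using $\OM[\eee\, uv]=\sum_k u^k e_k[v]$ and the plethystic addition rule $\OM[A+B]=\OM[A]\OM[B]$, one gets
\begin{align}
\BB_a\BC_b P[X]&=\Big(-\tfrac1q\Big)^{b-1} P\Big[X-\tfrac{1-1/q}{z}+\eee\tfrac{1-q}{w}\Big]\,\OM[zX]\,\OM[\eee wX]\,\OM\big[\eee w\big(-\tfrac{1-1/q}{z}\big)\big]\Big|_{z^b w^a},\nonumber
\end{align}
and the analogous formula for $\BC_b\BB_a P[X]$ in which the extra cross factor is instead $\OM\big[z\,\eee\tfrac{1-q}{w}\big]$ (the $z$-series of $\BC_b$ hitting the $w$-shift of $\BB_a$), since $\BC_b$ contains the $h$-series $\OM[zX]$ while $\BB_a$ contains the $e$-series and performs the shift $X\mapsto X+\eee(1-q)/w$. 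Thus $q\,\BC_b\BB_a P-\BB_a\BC_b P$ equals the same prefactor $P[\cdots]$ times the same $\OM[zX]\OM[\eee wX]$, now multiplied by the difference
\[
q\,\OM\Big[z\eee\tfrac{1-q}{w}\Big]-\OM\Big[\eee w\cdot\Big(-\tfrac{1-1/q}{z}\Big)\Big]
= q\,\OM\Big[\tfrac{\eee z(1-q)}{w}\Big]-\OM\Big[\tfrac{\eee w(1/q-1)}{z}\Big],
\]
after which one extracts $z^b w^a$.

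The heart of the matter is then a clean bookkeeping of this last difference. Expanding $\OM[\eee uv]=\sum_{k\ge0}u^k e_k[v]$ with $e_k$ of a single letter, $\OM\big[\tfrac{\eee z(1-q)}{w}\big]=\sum_{k\ge0}(z/w)^k (1-q)^k$ is a geometric-type series in $z/w$, and likewise the other term is a series in $w/z$; the prefactor $P[X-\tfrac{1-1/q}{z}+\eee\tfrac{1-q}{w}]$ expands in nonnegative powers of $1/z$ and $1/w$. So extracting $z^b w^a$ picks out exactly the terms with matching total degree, and one sees that if $a+b>0$ there is no way to balance the powers and the coefficient vanishes; if $a+b=0$ only the degree-zero term of the prefactor survives, contributing $P[X]$ times $q\cdot(\text{coeff})-(\text{coeff})$ which collapses to $(q-1)(-1)^{a+b-1}q^{-(b-1)}P[X]$ after simplifying $(1-q)^k$ and $(1/q-1)^k$ against the $(-1/q)^{b-1}$ prefactor; and if $a+b<0$ one residual shift $(\tfrac1q-q)/z$ in the argument of $P$ survives from combining the two shifts, giving the stated $P\big[X+(\tfrac1q-q)/z\big]\big|_{z^{a+b}}$. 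I would carry out the sign/power reconciliation once carefully in the $a+b=0$ case, observe it propagates verbatim, and present the $a+b<0$ case as the combinatorially transparent ``leftover shift'' computation.

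The main obstacle I anticipate is purely notational rather than conceptual: keeping the four $\OM$ factors, the two formal variables, and the $\eee$-sign conventions \eqref{eq:2.4}--\eqref{eq:2.5} straight long enough to see that \emph{three} of the four $\OM$ factors are common to both orderings and cancel, leaving the single asymmetric cross term above. In particular one must be careful that the shift inside $P[\cdots]$ really is order-independent (it is, since $-\tfrac{1-1/q}{z}$ and $\eee\tfrac{1-q}{w}$ are added, and plethystic addition of arguments commutes), so that \emph{all} of the discrepancy is forced into the cross $\OM$-factor — this is the one step where a sign error would silently corrupt the final formula, so I would double-check it by testing the identity on $P=1$ and $P=p_1[X]=X$ for small $a,b$ before declaring victory.
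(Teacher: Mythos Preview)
Your strategy coincides with the paper's: expand both compositions via \eqref{eq:2.25}--\eqref{eq:2.26}, note that the shifted argument of $P$ and the two series $\OM[zX]$ and $\sum_m w^m e_m[X]$ are common to both orderings, and isolate the single asymmetric cross factor. The paper does this with explicit summation indices $r_1,r_2$ (your $1/z$- and $1/w$-degrees inside $P$) rather than keeping $z,w$ free until the end, but the content is identical.

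Where your argument has a real gap is the step you call ``degree balancing.'' The vanishing for $a+b>0$ is \emph{not} a degree count. Using $h_k[1-q]=1-q$ for $k\ge1$ (not $(1-q)^k$; the alphabet $1-q$ has two letters, and in any case $\OM[\eee uv]=\sum_k(-u)^k h_k[v]$, not $\sum_k u^k e_k[v]$), the cross-factor difference works out to $(q-1)\sum_{n\in\mathbb Z}(-w/z)^n$, a bilateral ``delta.'' Multiplying the common kernel by this and extracting $z^bw^a$ amounts to summing its $(i,j)$-coefficient over all $i+j=a+b$ with an alternating sign, and what makes that sum collapse is the convolution identity
\[
\sum_{u=0}^{N}(-1)^u h_u[X]\,e_{N-u}[X]\;=\;h_N[X-X]\;=\;\chi(N=0),
\]
applied with $N=a+b+r_1+r_2$. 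This is exactly the step the paper makes explicit just before \eqref{eq:2.30}, and it is what simultaneously forces $r_1+r_2=-(a+b)$ and hence yields all three cases at once. You should also repair a few bookkeeping slips before the prefactor $(q-1)(-1)^{a+b-1}/q^{b-1}$ will come out right: $\sum_m w^m e_m[X]=\OM[-\eee wX]$ rather than $\OM[\eee wX]$; and the cross factor you display under $\BB_a\BC_b$ actually belongs to $\BC_b\BB_a$ --- in $\BB_a\BC_b P$ it is the $\BB_a$-shift $X\mapsto X+\eee(1-q)/w$ that hits the already-present $\OM[zX]$, producing $\OM[\eee z(1-q)/w]$, a series in $z/w$ rather than $w/z$.
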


\begin{proof}
Using \eqref{eq:2.25} we get for $P\in \La^{=d}$,
\begin{align}
(-q)^{b-1}\TBH_b P[X]
&= \sum_{r_1=0}^dP \left[X-\frac{1-1/q }{ z_1}\right]\Big|_{z_1^{-r_1}}
\sum_{m\ge 0}z ^mh_m[X]\, \Big|_{z^{b+r_1}} \\
&\ses  
\sum_{r_1=0}^dP\left[X-\frac{1-1/q }{z_1}\right]\Big|_{z_1^{-r_1}}h_{b+r_1}[X] ~,
\end{align}
Now it will be convenient go set  for symmetric polynomial $P[X]$
\begin{equation}
P^{(r_1,r_2)}[X]\ses 
P\left[X  -\frac{1-1/q}{z_1}+\eee  \frac{1-q }{ z_2}\right]
\Big|_{z_1^{-r_1}z_2^{-r_2}}
\end{equation}
This given, \eqref{eq:2.26}  gives
\begin{align}
(-q)^{b-1}\BB_a\TBH_b P[X]\nonumber
&\ses
\sum_{r_1=0}^dP \left[ X-\frac{1-1/q }{ z_1}+\eee \frac{1-q }{ z_2}\right]\Big|_{z_1^{-r_1} z_2^{-r_2}} h_{b+r_1}
\left[ X+\eee \frac{1-q}{z_2}\right]\sum_{m\ge 0}\,z_2^m e_m[X] \, \Big|_{z_2^{a+r_2}}\\
&\ses
\sum_{r_1,r_2=0}^dP^{(r_1,r_2)}[X]  \sum_{s=0}^{b+r_1}h_{b+r_1-s}[X](-1)^s
h_s\big[  1-q \big]\sum_{m\ge 0}\,z_2^m e_m[X] \, \Big|_{z_2^{a+r_2+s}}\\
&\ses
\sum_{r_1,r_2=0}^d\sum_{s=0}^{b+r_1}
P^{(r_1,r_2)}[X] 
h_{b+r_1-s}[X](-1)^sh_s\big[  1-q\big] e_{a+r_2+s}[ X]\, . 
\end{align}

 Since it is easily shown that 
\begin{equation}
 h_s\big[  1-q\big] = 
\begin{cases}
  1 &\hbox{ if }s=0 \\
  1-q &\hbox{ if }s>0~,
\end{cases}
\label{eq:2.29}
\end{equation}
 we can  write
\begin{align}
 (-q)^{b-1}\BB_a\TBH_b P[X]
 &\ses 
 \sum_{r_1,r_2=0}^d 
 P^{(r_1,r_2)}[X] 
 h_{b+r_1 }[X]  e_{a+r_2 }[   X]\sps \\
 &\sps 
 (1-q)\sum_{r_1,r_2=0}^d\sum_{s=1}^{b+r_1}
 P^{(r_1,r_2)}[X] 
 h_{b+r_1-s}[X](-1)^s  e_{a+r_2+s}[   X]
\end{align}
 and the change of summation index $u=a+r_2+s$ gives
\begin{align}
\nonumber (-q)^{b-1}\BB_a\TBH_b P[X]
 &\ses 
 \sum_{r_1,r_2=0}^d 
 P^{(r_1,r_2)}[X] 
 h_{b+r_1 }[X]  e_{a+r_2 }[   X]\sps \\
 &\ess\ess\ess\ess\ess\sps 
 (-1)^a(1-q)\sum_{r_1,r_2=0}^d\sum_{u=a+r_2+1}^{a+b+r_1+r_2}
 P^{(r_1,r_2)}[X] 
 h_{a+b+r_1+r_2- u }[X](-1)^{u -r_2} e_u[   X]~.
\end{align}

 \noindent
 An entirely similar manipulation  gives 
\begin{align}
 (-q)^{b-1}\ess q\, \BC_b \BB_aP[X]
 &\ses \nonumber
 \sum_{r_1,r_2=0}^dP^{(r_1,r_2)} [X ]  
 e_{r_2+a }[ X]\,  
 h_{r_1+b}[X]  \\
 &\ess\ess  
 \sms(-1)^a\big( 1-q    \big)
 \sum_{r_1,r_2=0}^d P^{(r_1,r_2)} [X ]  
 \sum_{u=0}^{a+r_2} 
 e_u[  X]\,(-1)^{u+r_2  } \, 
 h_{a+b+r_1+r_2-u}[X]~,
\end{align}
 and thus by subtraction we get
\begin{align}
\nonumber (-q)^{b-1}\ess\big( &q\, \BC_b \BB_aP[X]\sms \BB_a\TBH_b\big) P[X]\\
 &\ses
  \big(         q-1    \big)(-1)^a
 \sum_{r_1,r_2=0}^d P^{(r_1,r_2)} [X ]  
 \sum_{u=0}^{a+b+r_1+r_2} 
 h_u[-  X]\,(-1)^{r_2  } \, 
 h_{a+b+r_1+r_2-u}[X]\\
&\ses
  \big(         q-1    \big)(-1)^a
 \sum_{r_1,r_2=0}^d P^{(r_1,r_2)} [X ]  (-1)^{r_2  } \,   
 h_{a+b+r_1+r_2 }[X-X]~.
\end{align}
 Since the homogeneous symmetric function $h_a$ on an empty alphabet vanishes  unless $a=0$, using the definition of $P^{(r_1,r_2)} [X ]$, this reduces to
\begin{equation}
  \big( q\, \BC_b \BB_aP[X]\sms \BB_a\TBH_b\big) P[X]
\,=\,  \frac{(q-1)(-1)^{a+b-1} }{ q^{b-1}}\hskip -.2in
\sum_{\multi{r_1,r_2= 0\cr
r_1+r_2=-(a+b)}} ^{d}\hskip -.2in
(-1)^{r_2}
 P[X  -\tttt{1-1/q \over z_1}+\tttt{\eee  \tttt{1-q \over z_2}}]
 \Big|_{
 {1\over z_1^{r_1} }
 {1\over z_2^{ r_2}  }
 }
\label{eq:2.30}
\end{equation}
which immediately gives the first two cases of \eqref{eq:2.28}. 
For the remaining case, notice that for suitable coefficients $a_{r_1,r_2}$
we may write 
$$
 P[X  -\tttt{1-1/q \over z_1}+\tttt{\eee  \tttt{1-q \over z_2}}]
 \Big|_{
 {1\over z_1^{r_1} }
 {1\over z_2^{ r_2}  }}\ses \eee^{r_2}a_{r_1,r_2}
 $$
and the sum on the right hand side of \eqref{eq:2.30} may be rewritten as 
$$
\sum_{\multi{r_1,r_2= 0\cr
r_1+r_2=-(a+b)}} ^{d}\hskip -.2in
(-1)^{r_2} \eee^{r_2}a_{r_1,r_2}
\ses\hskip -.2in
\sum_{\multi{r_1,r_2= 0\cr
r_1+r_2=-(a+b)}} ^{d}\hskip -.2in
a_{r_1,r_2}
\ses\hskip -.2in
\sum_{\multi{r_1,r_2= 0 
 }} ^{d }
a_{r_1,r_2}
\tttt{ {1\over z ^{r_1} }
 {1\over z ^{ r_2}  }}\Big|_{1\over z^{-(a+b)}}
 \ses 
 P[X  -\tttt{1-1/q \over z }+\tttt{   \tttt{1-q \over z }}]
 \Big|_{z^{a+b}},
$$
which is precisely as asserted in \eqref{eq:2.28}.
\end{proof} 

We are now finally in a position to give an outline of the manipulations that we will carry out to prove the recursion of Theorem \ref{thm:II.2}.
\sas

A  close examination of our previous work, most particularly our result in \cite{GXZ:2011} and \cite{GXZ}, 
might suggest that  also in the present case  we should also have
(for $m>1$)  an  identity of the form
\begin{equation}
\LL \nabla \BC_m \BC_\alpha 1,   e_a h_bh_{c} \RR \, =\, 
 t^{m-1} q^{\ell(\alpha)} \sum_{\beta \models m-1}  
\LL \nabla \BC_\alpha\BC_\beta 1,  e_{a-1} h_{b } h_{c }\RR~.
\label{eq:2.31}
\end{equation}
Now we can easily see from Theorem \ref{thm:II.2} that an additional  term  is need  to obtain a true identity. But to give an idea of how one may 
end up discovering the  correct recursion, let us pretend that \eqref{eq:2.31} is true and try to derive from it what  is needed to prove it.

Note first that using  \eqref{eq:II.12}  for $n=m-1$ and \eqref{eq:2.26} with $P[X]= 1$ we obtain that 
$$
\sum_{\bbb\models m-1}\BC_\bbb\, 1 \ses e_{m-1}[X]\ses \BB_{m-1}\, 1~.
$$
This allows us to rewrite \eqref{eq:2.31} in the more compact form 
$$
\LL \nabla \BC_m \BC_\beta 1,   e_a h_bh_{c} \RR \, =\, 
 t^{m-1} q^{\ell(\alpha)}  
\LL \nabla \BC_ \alpha\ \BB_{m-1}1,  e_{a-1} h_{b } h_{c }\RR
$$
and,  by a multiple use of the commutativity relation in \eqref{eq:2.27}, we can further simplify  this to
$$
\LL \nabla \BC_m \BC_\alpha 1,   e_a h_bh_{c} \RR \, =\, 
 t^{m-1}    
\LL \nabla \BB_{m-1} \BC_ \alpha\ 1,  e_{a-1} h_{b } h_{c }\RR~.
$$
Following the manipulations carried out in \cite{GXZ:2011} our next step is to pass to $*$-scalar products (using \eqref{eq:2.23})  and obtain
$$
\LL \nabla \BC_m \BC_\alpha 1,   h_a^* e_b^*e_{c}^* \RR_* \, =\, 
 t^{m-1}    
\LL \nabla \BB_{m-1} \BC_ \alpha\ 1,  h_{a-1}^* e_{b }^* e_{c }^*\RR_*~.
$$
Since the $\nabla$ operator is self adjoint with respect to the $*$-scalar product, this in turn can be rewritten as
\begin{equation}
\LL    \BC_\alpha 1   \scs \BC_m^* \nabla\,  h_a^* e_b^*e_{c}^* \RR_* \, =\, 
 t^{m-1}    
\LL    \BC_ \alpha\, 1  \scs \BB_{m-1} ^* \nabla\,  h_{a-1}^* e_{b }^* e_{c }^*\RR_*\scs
\label{eq:2.32}
\end{equation}
 where $\BC_m^*$ and $\BB_{m-1} ^* $ denote the $*$-scalar product  adjoints of  $\BC_m $ and $\BB_{m-1}  $.

Now,  except  for a minor rescaling, the    $ \BC_ \alpha\,1$ are essentially   Hall-Littlewood polynomials
and the latter, for $\aaa$  a partition,  are a well known basis. Thus  \eqref{eq:2.32} can be true if and only if 
we have the symmetric function identity 
\begin{equation}
\BC_m^* \nabla\,  h_a^* e_b^*e_{c}^*\ses t^{m-1}  \BB_{m-1} ^* \nabla\,  h_{a-1}^* e_{b }^* e_{c }^*~.
\label{eq:2.33}
\end{equation}

Starting from the left hand side of \eqref{eq:2.33}, a remarkable sequence of manipulations, step by step guided by our previous work, reveals that
\eqref{eq:2.33} is but a tip of an iceberg. In fact, as we will show in the next section that the correct form of \eqref{eq:2.33} may be stated as follows.
\sas

\begin{theorem}\label{thm:2.1}
Let and $a,b,c,m,n \in {\BZ}$ be such that
$a+b+c = m + n$ and $\alpha \models n$.  Then
\begin{align}
 \BC_m^* \nabla\,  h_a^* e_b^*e_{c}^*\ses    t^{m-1}   
 \BB_{m-1} ^* \nabla\, &  h_{a-1}^* e_{b }^* e_{c }^*\sps  t^{m-1}  \BB_{m-2} ^* \nabla\,  h_{a}^* e_{b-1 }^* e_{c-1 }^*\sps \nonumber\\
&\ess\ess\ess\ess
\sps \chi(m=1)\big( \nabla   h_{a}^* e_{b-1 }^* e_{c }^*\sps   \nabla h_{a}^* e_{b }^* e_{c-1 }^*\big)~.
\label{eq:2.34}
\end{align}
\end{theorem}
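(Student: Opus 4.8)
The strategy is to compute the $*$-adjoints $\BC_m^*$ and $\BB_k^*$ explicitly as plethystic operators and then expand both sides of \eqref{eq:2.34} in a convenient basis. First I would record the action of $\nabla$ on the products $h_a^* e_b^* e_c^*$ by using Proposition~\ref{prop:2.4}: since $h_a^*[X]=h_a[X/M]$ and $e_b^*[X]=e_b[X/M]$, the product $h_a^* e_b^* e_c^*$ is a sum over triples of partitions, and each $\TH_\mu$-coefficient can be read off from the Cauchy-type expansions in a)--c). Equivalently, and more usefully, I would pair both sides of \eqref{eq:2.34} against an arbitrary $\TH_\mu$ in the $*$-scalar product; by \eqref{eq:2.11} this reduces the theorem to a scalar identity for each $\mu$, with the left side becoming $\LL \nabla h_a^* e_b^* e_c^*, \BC_m \TH_\mu\RR_* / w_\mu$ and similarly on the right with $\BB_{m-1},\BB_{m-2}$ in place of $\BC_m$. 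Thus the whole theorem is equivalent to an identity among the symmetric functions $\BC_m\TH_\mu$, $\BB_{m-1}\TH_\mu$, $\BB_{m-2}\TH_\mu$ paired against $\nabla h_a^* e_b^* e_c^*$.

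The key computational input is Proposition~\ref{prop:2.6}, the refined commutation of $\BB_a$ and $\BC_b$. The plan is to run an induction on $\ell(\alpha)$ — or rather to recognize, exactly as in the outline preceding the statement, that \eqref{eq:2.34} is the ``symmetric function shadow'' of the recursion of Theorem~\ref{thm:II.2}, so that proving \eqref{eq:2.34} proves Theorem~\ref{thm:II.2} after pairing with $\BC_\alpha 1$. Concretely: starting from $\BC_m^* \nabla h_a^* e_b^* e_c^*$, I would use \eqref{eq:2.23}/\eqref{eq:2.20} to move between the Hall and $*$ pairings, use Proposition~\ref{prop:2.5} (with $f = e_a$-type factors, exploiting \eqref{eq:2.24}) to get closed forms for the scalar products $\LL \TH_\mu, e_a h_b h_c\RR$ in terms of $B_\mu$, and then apply the operator identity in Proposition~\ref{prop:2.6} with $(a,b)$ chosen so that $a+b = 0$ and $a+b<0$ cases both appear — the $a+b=0$ case contributes the main term $t^{m-1}\BB_{m-1}^*\nabla h_{a-1}^* e_b^* e_c^*$, while the $a+b<0$ case produces the second term $t^{m-1}\BB_{m-2}^*\nabla h_a^* e_{b-1}^* e_{c-1}^*$ together with the $\chi(m=1)$ correction. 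The factor $(q-1)(-1)^{a+b-1}/q^{b-1}$ in \eqref{eq:2.28} is precisely what will reassemble into the $t^{m-1}$ prefactors and the $\chi(m=1)$ indicator after the rescaling relating $\BC$ operators to Hall--Littlewood polynomials is accounted for.

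More explicitly, I expect the argument to proceed by writing $\BC_m = (\BB_{-1})^{?}\cdots$ — no: better, by using \eqref{eq:2.27} to push a $\BB_{-1}$ or $\BB_{m-1}$ through, so that $\BC_m^*$ acting after $\nabla$ gets traded, via the $*$-adjoint of \eqref{eq:2.28} with $b = m$, $a = -(m-1)$ (so $a+b = 1 > 0$) and with $b=m$, $a=-m$ (so $a+b=0$), for $\BB_{m-1}^*$ and $\BB_{m-2}^*$ terms plus the boundary piece that is nonzero only when the relevant index forces $m=1$. The degrees match because $h_a^* e_b^* e_c^*$ has degree $a+b+c = m+n$ and applying $\BC_m^*$ drops the degree by $m$, landing in degree $n$, consistent with later pairing against $\BC_\alpha 1 \in \Lambda^{=n}$; on the right, $\BB_{m-1}^*$ applied to the degree-$(m-1+n)$ function $h_{a-1}^* e_b^* e_c^*$ also lands in degree $n$, and similarly for the other terms.

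\textbf{The main obstacle.} The hard part is bookkeeping the plethystic substitutions: applying $\nabla$ between two non-commuting plethystic operators means I must track how $\nabla$ interacts with the ``$X - (1-1/q)/z_1 + \epsilon(1-q)/z_2$'' shifts inside $\BB_a$ and $\BC_b$, and $\nabla$ does not act nicely on such shifted alphabets. The way around this — and the technical heart of the proof — is to never expand $\nabla$ directly, but rather to pair everything against $\TH_\mu$ first (so $\nabla$ becomes the scalar $T_\mu$), then use the explicit Pieri-type summation formulas of Proposition~\ref{prop:2.3} and the reciprocity of Proposition~\ref{prop:2.2} to collapse the resulting sums over $\mu$. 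I anticipate that the second term (the one with $e_b \to e_{b-1}$, $e_c\to e_{c-1}$) and the $\chi(m=1)$ term will only emerge after a somewhat delicate separation of the $a+b<0$ tail in \eqref{eq:2.28} into its ``$z^{a+b}$-extraction'' piece, and matching that extraction against $h_{a}^* e_{b-1}^* e_{c-1}^*$ will require the identity \eqref{eq:2.24} in an essential way together with the observation that $e_{b-1}^* e_{c-1}^*$ differs from $e_b^* e_c^*$ by exactly the kind of single-box Pieri move that \eqref{eq:2.17}--\eqref{eq:2.18} control.
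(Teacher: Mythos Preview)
Your proposal has a genuine gap: you have identified the wrong engine for the proof. You make Proposition~\ref{prop:2.6} (the refined $\BB_a$--$\BC_b$ commutation) the ``key computational input,'' and you claim that the factor $(q-1)(-1)^{a+b-1}/q^{b-1}$ from \eqref{eq:2.28} ``is precisely what will reassemble into the $t^{m-1}$ prefactors.'' But \eqref{eq:2.28} contains no $t$ whatsoever, so it cannot be the source of $t^{m-1}$. Moreover, that commutation relation compares $\BB_a\BC_b P$ with $\BC_b\BB_a P$ for the \emph{same} $P$ and with no $\nabla$ present; it does not explain how applying $\BC_m^*$ after $\nabla$ on $h_a^* e_b^* e_c^*$ turns into applying $\BB_{m-1}^*$ after $\nabla$ on the \emph{different} function $h_{a-1}^* e_b^* e_c^*$. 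You are, in effect, conflating the role Proposition~\ref{prop:2.6} plays in Proposition~\ref{prop:2.7} (deducing Theorem~\ref{thm:II.2} from Theorem~\ref{thm:2.1}) with the proof of Theorem~\ref{thm:2.1} itself.

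The paper's actual proof is a direct plethystic computation and does not use Proposition~\ref{prop:2.6} at all. It first writes $\nabla h_a^* e_b^* e_c^*$ explicitly (Lemma~\ref{lemma:3.1}) as a double sum over $r,s$ and $\nu\vdash r+s$ with kernel $e_r[B_\nu]\, e_n[XD_\nu/M]/w_\nu$; this step uses Propositions~\ref{prop:2.5} and~\ref{prop:2.2}~b) and the identity \eqref{eq:2.24}. It then applies $\BC_m^*$ via its explicit plethystic formula \eqref{eq:3.1} to the factor $e_n[XD_\nu/M]$ (Lemma~\ref{lemma:3.2}); the $t^{m-1}$ emerges here, from the Pieri sum \eqref{eq:2.17}, which replaces $h_u[MB_\nu-1]$ by $(tq)^{u-1}M\sum_{\tau\to\nu}c_{\nu\tau}(T_\nu/T_\tau)^{u-1}$ (minus a $\chi(u=1)$ correction). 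After reorganizing the resulting sum over $\nu\leftarrow\tau$, the second Pieri sum \eqref{eq:2.18} is used to collapse $\sum_{\nu\leftarrow\tau} d_{\nu\tau}(T_\nu/T_\tau)^k$, and this splits the expression into two pieces $\Phi^{(1)},\Phi^{(2)}$ plus the two $\chi(m=1)$ terms. Finally the same plethystic formula \eqref{eq:3.2} for $\BB_{m-1}^*$ and $\BB_{m-2}^*$, applied to the analogous expansions of $\nabla h_{a-1}^* e_b^* e_c^*$ and $\nabla h_a^* e_{b-1}^* e_{c-1}^*$, is shown to reproduce $\Phi^{(1)}$ and $\Phi^{(2)}$ exactly. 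So the correct skeleton is: explicit expansion of $\nabla h_a^* e_b^* e_c^*$ $\to$ apply $\BC_m^*$ plethystically $\to$ Pieri sums \eqref{eq:2.17}--\eqref{eq:2.18} (this is where $t$ enters) $\to$ recognize the pieces as $\BB_{m-1}^*$ and $\BB_{m-2}^*$ applied to the shifted arguments.
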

 
Postponing to next section the proof of this result, we will terminate this section by showing the following connection with the results
stated in the introduction.
\sas

\begin{prop}\label{prop:2.7}
Theorems \ref{thm:II.2} and \ref{thm:2.1} are equivalent.
\end{prop}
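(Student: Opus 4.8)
\textbf{Proof plan for Proposition \ref{prop:2.7}.}
The plan is to show that the scalar-product identity \eqref{eq:2.34} of Theorem \ref{thm:2.1} is literally a disguised form of the recursion \eqref{eq:II.17}--\eqref{eq:II.18} of Theorem \ref{thm:II.2}, by pairing both sides of \eqref{eq:2.34} against the Hall-Littlewood-type basis element $\BC_\alpha 1$ in the $*$-scalar product and unwinding the resulting expression using the tools already set up in the excerpt. The one new ingredient beyond the outline surrounding \eqref{eq:2.32}--\eqref{eq:2.33} is that we can no longer restrict to $\alpha$ a partition: since the $\BC_\alpha 1$ for $\alpha$ ranging over \emph{all} compositions of $n$ span the relevant space (indeed by \eqref{eq:II.12} their sum is $e_n$, and more is true), the identity \eqref{eq:2.34} among symmetric functions is equivalent to the family of numerical identities obtained by taking $\LL \BC_\alpha 1,\, \cdot\, \RR_*$ for every composition $\alpha \models n$.

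First I would take the $*$-scalar product of \eqref{eq:2.34} with $\BC_\alpha 1$ and move the adjoint operators $\BC_m^*$, $\BB_{m-1}^*$, $\BB_{m-2}^*$ back across, using that $\nabla$ is $*$-self-adjoint, to land on expressions of the shape $\LL \nabla \BC_m \BC_\alpha 1,\, h_a^* e_b^* e_c^*\RR_*$ and $\LL \nabla \BB_{m-1}\BC_\alpha 1,\, h_{a-1}^* e_b^* e_c^*\RR_*$, exactly reversing the steps that led from \eqref{eq:2.31} to \eqref{eq:2.33}. Then I would convert each $*$-scalar product back to a Hall scalar product via \eqref{eq:2.23}, turning $h_a^* e_b^* e_c^*$ back into $e_a h_b h_c$ (and its shifted variants), so that every term becomes of the form $\LL \nabla(\text{operator word})\,1,\, e_\ast h_\ast h_\ast\RR$. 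The term $t^{m-1}\BB_{m-1}^*\nabla h_{a-1}^* e_b^* e_c^*$ produces $t^{m-1}\LL \nabla \BB_{m-1}\BC_\alpha 1, e_{a-1}h_b h_c\RR$, which by repeated use of the commutation relation \eqref{eq:2.27} equals $t^{m-1}q^{\ell(\alpha)}\LL \nabla \BC_\alpha \BB_{m-1}1, e_{a-1}h_bh_c\RR$, and then by \eqref{eq:II.12} applied with $n = m-1$ (in the form $\BB_{m-1}1 = e_{m-1} = \sum_{\beta\models m-1}\BC_\beta 1$) this becomes $t^{m-1}q^{\ell(\alpha)}\sum_{\beta\models m-1}\LL \nabla \BC_\alpha\BC_\beta 1, e_{a-1}h_bh_c\RR$ --- precisely the first term on the right of \eqref{eq:II.17}. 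The term with $\BB_{m-2}^*$ is handled identically and yields the second term of \eqref{eq:II.17}, with $e_{m-2} = \sum_{\beta\models m-2}\BC_\beta 1$ and the scalar product $\LL\nabla\BC_\alpha\BC_\beta 1, e_a h_{b-1}h_{c-1}\RR$.

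The only genuine subtlety --- and the step I expect to be the main obstacle --- is the $\chi(m=1)$ correction term, which is where the commutation relation \eqref{eq:2.27} fails because $\BB_a\BC_b = q\BC_b\BB_a$ requires $a+b>0$, exactly the hypothesis that breaks down when we try to commute $\BB_0$ (arising from $e_0 = 1$) past $\BC_1$. Here the refined Proposition \ref{prop:2.6} is what is needed: in the $m=1$ case one has $\BB_{m-1} = \BB_0$, and pushing it past $\BC_\alpha$ through those $\alpha_i$ equal to $1$ produces, by \eqref{eq:2.28} in the $a+b=0$ case, exactly the extra sum $(q-1)\sum_{i:\alpha_i=1}q^{i-1}\LL\nabla\BC_{\widehat\alpha^{(i)}}1, e_a h_{b-1}h_{c-1}\RR$ of \eqref{eq:II.18}, while the two ``base'' terms $\nabla h_a^* e_{b-1}^* e_c^* + \nabla h_a^* e_b^* e_{c-1}^*$ on the right of \eqref{eq:2.34} convert under \eqref{eq:2.23} to $\LL\nabla\BC_\alpha 1, e_a h_{b-1}h_c + e_a h_b h_{c-1}\RR$, and $t^{m-1} = t^0 = 1$ collapses the first two terms into $q^{\ell(\alpha)}\LL\nabla\BC_\alpha 1, e_{a-1}h_bh_c\RR$ after $\BB_0 1 = e_0 = 1$ removes the summation over $\beta\models 0$. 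Carefully tracking the power $q^{i-1}$ --- which records how many of the earlier $\BC_{\alpha_j}$ the $\BB_0$ has already commuted past (each contributing a factor $q$) before it hits the $\BC_1$ it annihilates --- against the bookkeeping in Proposition \ref{prop:2.6} is the one place where sign and exponent errors are easy to make, so I would carry out that commutation one $\BC_{\alpha_j}$ at a time and verify the telescoping explicitly. Conversely, since every step above is reversible, the numerical identities \eqref{eq:II.17}--\eqref{eq:II.18} for all $\alpha\models n$ imply \eqref{eq:2.34}, giving the equivalence.
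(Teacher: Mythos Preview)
Your overall strategy matches the paper's proof exactly: pair \eqref{eq:2.34} against $\BC_\alpha 1$ in the $*$-scalar product, push the adjoints back across, convert to the Hall scalar product via \eqref{eq:2.23}, and then commute the $\BB$-operators past $\BC_\alpha$ and expand $\BB_k 1 = e_k = \sum_{\beta\models k}\BC_\beta 1$. For $m>1$ your treatment is correct and essentially identical to the paper's.

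However, your analysis of the $m=1$ case contains a concrete misattribution that would derail the computation. You write that the commutation \eqref{eq:2.27} ``breaks down when we try to commute $\BB_0$ past $\BC_1$,'' and that pushing $\BB_{m-1}=\BB_0$ past the $\alpha_i=1$ factors is what produces the correction sum $(q-1)\sum_{i:\alpha_i=1}q^{i-1}\LL\nabla\BC_{\widehat\alpha^{(i)}}1, e_a h_{b-1}h_{c-1}\RR$. This is not right: for $\BB_0\BC_b$ one has $a+b=b\ge 1>0$, so \eqref{eq:2.27} holds cleanly and $\BB_0\BC_\alpha 1 = q^{\ell(\alpha)}\BC_\alpha\BB_0 1 = q^{\ell(\alpha)}\BC_\alpha 1$ with no correction; this term carries the argument $h_{a-1}^* e_b^* e_c^*$ and yields precisely $q^{\ell(\alpha)}\LL\nabla\BC_\alpha 1, e_{a-1}h_b h_c\RR$. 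The correction terms come instead from $\BB_{m-2}=\BB_{-1}$, for which $\BB_{-1}\BC_1$ has $a+b=0$ and \eqref{eq:2.28} gives $\BB_{-1}\BC_b = q\BC_b\BB_{-1} + \chi(b=1)(q-1)I$. Since $\BB_{-1}1=0$, iterating this yields $\BB_{-1}\BC_\alpha 1 = (q-1)\sum_{i:\alpha_i=1}q^{i-1}\BC_{\widehat\alpha^{(i)}}1$; and crucially this operator is paired with $h_a^* e_{b-1}^* e_{c-1}^*$, which is why the correction sum carries $e_a h_{b-1} h_{c-1}$. Your sentence ``$t^0=1$ collapses the first two terms into $q^{\ell(\alpha)}\LL\nabla\BC_\alpha 1, e_{a-1}h_bh_c\RR$'' is therefore wrong as stated: the two $\BB$-terms of \eqref{eq:2.34} do not merge --- the $\BB_0$ term alone gives $q^{\ell(\alpha)}\LL\nabla\BC_\alpha 1, e_{a-1}h_bh_c\RR$, the $\BB_{-1}$ term alone gives the $(q-1)$ correction sum, and the two $\chi(m=1)$ terms of \eqref{eq:2.34} give $\LL\nabla\BC_\alpha 1, e_a h_{b-1}h_c + e_a h_b h_{c-1}\RR$. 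Once you swap the roles of $\BB_0$ and $\BB_{-1}$ in your write-up, the argument goes through exactly as in the paper.
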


\begin{proof}
Note first  that for $m\ge 2$ the identity in \eqref{eq:2.34} reduces to 
\begin{equation}
\BC_m^* \nabla\,  h_a^* e_b^*e_{c}^*\ses    t^{m-1}   \BB_{m-1} ^* \nabla\,    h_{a-1}^* e_{b }^* e_{c }^*\sps  t^{m-1}  \BB_{m-2} ^* \nabla\,  h_{a}^* e_{b-1 }^* e_{c-1 }^*~,
\label{eq:2.35}
\end{equation}
and the recursion in \eqref{eq:II.17} is simply obtained by reversing the steps that got us from \eqref{eq:2.31} to \eqref{eq:2.33}. We will carry them out for sake of completeness.
 To begin we take the $*$-scalar product with by $\BC_\aaa\, 1$ on both sides of this equation to obtain
 $$
\LL \BC_\aaa\, 1 \scs \BC_m^* \nabla\,  h_a^* e_b^*e_{c}^*\RR_*\ses    t^{m-1}  
\LL \BC_\aaa\, 1  \scs  \BB_{m-1} ^* \nabla\,    h_{a-1}^* e_{b }^* e_{c }^*\RR_*\sps  t^{m-1} \LL \BC_\aaa\, 1  
\scs  \BB_{m-2} ^* \nabla\,  h_{a}^* e_{b-1 }^* e_{c-1 }^*\RR_*
$$
 and moving  all the operators from  the right to the left of the $*$-scalar product gives
  $$
\LL \nabla\,  \BC_m \BC_\aaa\scs   h_a^* e_b^*e_{c}^*\RR_*\ses    t^{m-1}  
\LL \nabla\,  \BB_{m-1}  \BC_\aaa\scs       h_{a-1}^* e_{b }^* e_{c }^*\RR_*\sps  t^{m-1} \LL  \nabla\,  \BB_{m-2}  \BC_\aaa
\scs    h_{a}^* e_{b-1 }^* e_{c-1 }^*\RR_*~.
$$
In terms of the ordinary Hall scalar product this identity becomes
\begin{equation}
\LL \nabla\,  \BC_m \BC_\aaa\, 1 \scs   e_a h_b h_{c} \RR \ses    t^{m-1}  
\LL \nabla\,  \BB_{m-1}  \BC_\aaa\, 1  \scs       e_{a-1}  h_{b } h_{c } \RR \sps  t^{m-1} \LL  \nabla\,  \BB_{m-2}  \BC_\aaa\, 1  
\scs    e_{a}  h_{b-1 }  h_{c-1}  \RR ~.
\label{eq:2.36}
\end{equation}
The commutativity relation in \eqref{eq:2.27} then  gives 
  $$
\LL \nabla\,  \BC_m \BC_\aaa\, 1  \scs   e_a h_b h_{c} \RR \ses    t^{m-1} q^{\ell(\aaa)} 
\LL \nabla\,  \BC_\aaa  \BB_{m-1} \, 1\scs       e_{a-1}  h_{b } h_{c } \RR \sps  t^{m-1}  q^{\ell(\aaa)} 
\LL  \nabla\,    \BC_\aaa \BB_{m-2}\, 1  \scs    e_{a}  h_{b-1 }  h_{c-1}  \RR ~.
$$
\vskip -.08in
\noindent
Finally the two expansions
\vskip -.2in
$$
\BB_{m-1}\, 1\ses \sum_{\bbb\models m-1 }\BC_\bbb 1\ess\ess\ess \ess\ess\ess
\hbox{and} \ess\ess\ess \ess\ess\ess  \BB_{m-2}\, 1\ses \sum_{\bbb\models m-2 }\BC_\bbb 1
$$

\vskip -.08in
\noindent
give that
\begin{align}
\LL \nabla \BC_m \BC_\alpha 1,   e_a h_bh_{c} \RR \, =\, 
 t^{m-1} q^{\ell(\alpha)} \sum_{\beta \models m-1}& 
\LL \nabla \BC_ \alpha\BC_\beta 1,  e_{a-1} h_{b } h_{c }  \RR\\
 &+  t^{m-1} q^{\ell(\alpha)} \sum_{\beta \models m-2}    
\LL\nabla \BC_ \alpha\BC_\beta 1,  e_{a}  h_{b-1} h_{c-1}  \RR~.
\end{align}
\vskip -.13in
\noindent
This shows that, for $m>1$, \eqref{eq:2.34} implies \eqref{eq:II.17}.

Next note that for $m=1$, \eqref{eq:2.34} reduces to
$$ 
\BC_1^* \nabla\,  h_a^* e_b^*e_{c}^*\ses \BB_{0} ^* \nabla\,  h_{a-1}^* e_{b }^* e_{c }^*\sps     
\BB_{-1} ^* \nabla\,  h_{a}^* e_{b-1 }^* e_{c-1 }^*\sps 
\big( \nabla   h_{a}^* e_{b-1 }^* e_{c }^*\sps \nabla h_{a}^* e_{b }^* e_{c-1 }^*\big)
$$

and the same steps that brought us from \eqref{eq:2.35} to \eqref{eq:2.36} yield us the identity
\begin{align}
\LL\nabla  \BC_1\BC_\aaa 1\scs e_{a}h_{b}h_{c}\RR 
\ses 
\LL\nabla  \BB_0\BC_\aaa 1\scs & e_{a-1}h_{b}h_{c}\RR\sps  
\LL\nabla  \BB_{-1}\BC_\aaa 1\scs e_{a }h_{b-1}h_{c-1}\RR\sps
\nonumber\\
&\sps
\LL \nabla \BC_\aaa\, 1 \scs e_ah_{b-1}h_c\RR\sps \LL \nabla \BC_\aaa\, 1 \scs e_ah_{b}h_{c-1}\RR~.
\label{eq:2.37}
\end{align}

Here the only thing that remains to be done is moving $\BB_0$ and $\BB_{-1}$ to the right past all components of $\BC_\aaa$.
This can be achieved by the following two cases of \eqref{eq:2.28}
\begin{equation}
a)\ess \BB_0 \BC_b=q \BC_b \BB_0\bigsp\hbox{and}\bigsp 
b)\ess \BB_{-1} \BC_b=q \BC_b \BB_{-1}\sps 
\chi(b=1) \, (q-1) I
\bigsp (\hbox{for all $b\ge 1$})
\label{eq:2.38}
\end{equation}
where $I$ denotes the identity operator. Now using the first we  immediately obtain
\begin{equation}
\LL\nabla  \BB_0\BC_\aaa 1\scs   e_{a-1}h_{b}h_{c}\RR\ses q^{\ell(\aaa)} \LL \nabla \BC_\aaa \, 1\scs e_{a-1}h_{b}h_{c}\RR,
\label{eq:2.39}
\end{equation}
since \eqref{eq:2.26} gives $\BB_0\,1 =1$.

The effect of \eqref{eq:2.38} b) on the polynomial  $\LL\nabla  \BB_{-1}\BC_\aaa 1\scs e_{a }h_{b-1}h_{c-1}\RR$ is best understood
by working out an example. Say $\aaa=\bb\, 1\,\gg\, 1\,\dd\, 1\TT $ with $\bb,\gg,\dd,\TT$ compositions, 
the $1\,'s$  in positions $i_1,i_2,i_3$ and $\ell(\aa)=k$.
In this case multiple applications of \eqref{eq:2.38} b) yield the sum of the three terms on the right.
\begin{equation*}
\begin{matrix}
B_{-1}\BC_\aaa\, 1=&B_{-1}\BC_{\bb}C_1\BC_{\gg}C_1\BC_{\dd}C_1\BC_{\TT} \, 1&  & \\
&\DA & & \\
&q^{i_1-1}\BC_{\bb}B_{-1}C_1\BC_{\gg}C_1\BC_{\dd}C_1\BC_{\TT}\, 1 & \SRA &  (q-1) q^{i_1-1}\BC_{\bb}\bu \BC_{\gg}C_1\BC_{\dd}C_1\BC_{\TT}\, 1\\
&\DA & & \\
&q^{i_2-1}\BC_{\bb}C_1\BC_{\gg}B_{-1} C_1\BC_{\dd}C_1\BC_{\TT}\, 1 & \SRA & (q-1) q^{i_2-1}\BC_{\bb}C_1\BC_{\gg}\bu \BC_{\dd}C_1\BC_{\TT}\, 1\\
&\DA & & \\
&q^{i_3-1}\BC_{\bb}C_1\BC_{\gg} C_1\BC_{\dd}B_{-1} C_1\BC_{\TT} \, 1& \SRA & (q-1) q^{i_3-1}\BC_{\bb}C_1\BC_{\gg}C_1\BC_{\dd}\bu \BC_{\TT}\, 1\\
&\DA & & \\
&q^k\,\,\,  \BC_{\bb}C_1\BC_{\gg} C_1\BC_{\dd} C_1\BC_{\TT} B_{-1} \, 1&\ses 0&  \hbox{(since $\BB_{-1}\, 1=0$)}\\
\end{matrix}
\end{equation*}

and  we can easily see that in general we will have
$$
\BB_{-1}\BC_\aaa\, 1 \ses (q-1)\sum_{i:\alpha_i = 1} q^{i-1}  \BC_{{\widehat \alpha}^{(i)} }1~.
$$

\vskip-.3in
\noindent
This gives that
\begin{equation}
\LL\nabla  \BB_{-1}\BC_\aaa 1\scs e_{a }h_{b-1}h_{c-1}\RR
\ses (q-1)\sum_{i:\alpha_i = 1} q^{i-1}\LL \nabla  \BC_{{\widehat \alpha}^{(i)} }1  \scs e_{a }h_{b-1}h_{c-1}\RR
\label{eq:2.40}
\end{equation}
where, as before we have set ${\widehat \alpha}^{(i)} = 
(\alpha_1, \alpha_2, \ldots, \alpha_{i-1}, \alpha_{i+1}, \ldots, \alpha_{\ell(\alpha)})$. 

Thus combining \eqref{eq:2.37}, \eqref{eq:2.39} and \eqref{eq:2.40} we  obtain 
\begin{align*}
\LL \nabla \BC_1\BC_\alpha , e_a h_b h_c \RR \, =\, 
q^{\ell(\alpha)}  
\LL \nabla \BC_{\alpha}   1\scs  & e_{a-1} h_b \, h_{c}\RR +\LL \nabla \BC_{\alpha}   1,e_a h_{b-1}\,  h_{c}+e_{a} h_b  \, h_{c-1}\RR\\
& + 
(q-1)\sum_{i:\alpha_i = 1} q^{i-1} \LL \nabla \BC_{{\widehat \alpha}^{(i)} }1\scs  
 e_a h_{b-1} h_{c-1}\RR
\end{align*}
proving that \eqref{eq:2.34} for $m=1$ implies \eqref{eq:II.18}. Since all the steps that yielded 
\eqref{eq:II.17} and \eqref{eq:II.18} from \eqref{eq:2.34} are reversible our proof of the equivalence of
Theorems \ref{thm:II.1} and \ref{thm:II.2} is now complete.
\end{proof}

\end{section}
\begin{section}{Proof of Theorem \ref{thm:2.1}}
\begin{proof} (of Theorem \ref{thm:2.1})
It was shown in \cite{GXZ:2011} and  will not be repeated here that the $*$-duals of the $C$ and $B$ operators can be computed by means of the 
following plethysic formulas
\begin{equation}
\BC_a^* P[X]\, =\, (\tttt{ -1\over q})^{a-1} 
P\big[X-\tttt{\eee M\over z}  \big]
\OM\big[\tttt{-\eee zX\over q(1-t)} \big]
\Big|_{z^{-a}},
\label{eq:3.1}
\end{equation}
\begin{equation}
\BB_a^* P[X]\, =\, P\big[X+\tttt{M\over z}  \big]
\OM\big[\tttt{-zX\over 1-t} \big]
\Big|_{z^{-a}}
\label{eq:3.2}
\end{equation}
where $P[X]$ denotes a generic symmetric polynomial and we must recall, for the benefit of the reader, that
\begin{equation}
\OM\big[\tttt{-\eee zX\over q(1-t)} \big]\, =\,  \sum_{m\ge 0}z^m e_m[\tttt{X\over q(1-t)}] 
\ess\ess\ess\ess\ess\hbox{and}\ess\ess\ess\ess\ess
\OM\big[\tttt{-  zX\over (1-t)} \big]\, =\,  \sum_{m\ge 0}(-z)^m e_m[\tttt{X\over  (1-t)}] 
\label{eq:3.3}
\end{equation}
\end{proof}

In the next few pages we will prove a collection of lemmas and propositions which, when combined, yield the identity in \eqref{eq:2.34}. We will try whenever possible to motivate the sequence of steps converging to the final result.
\sas

To start computing the left hand side of \eqref{eq:2.33} as well as  for later purposes
we need the following auxiliary  identity.
\sas
\sas

\begin{lemma} \label{lemma:3.1}
For any $a,b,c\ge 0$, we have
\begin{equation}
\nabla h_a^*e_b^*e_{c}^*
\ses
\sum_{r=0}^a\sum_{s=0}^b  
e_{a-r}[\tttt{1\over M}]h_{b-s}[\tttt{1\over M}]
(-1)^{n-r-s}
\sum_{\nu\part r+s} {e_r\big[B_\nu  \big]e_{a+b+c} \big[{XD_\nu\over M}\big]
\over w_\nu}~.
\label{eq:3.4}
\end{equation}
\end{lemma}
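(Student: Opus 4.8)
The plan is to expand $\nabla h_a^* e_b^* e_c^*$ by first unravelling the star operations into evaluations at $X/M$ and then using the Macdonald polynomial expansions of Proposition~\ref{prop:2.4}. Recall that $h_a^* = h_a[X/M]$, and by \eqref{eq:2.22} similarly for $e_b^*,e_c^*$. First I would use the addition formula $h_a[X/M] = \sum_{r=0}^a h_r[X/M]\,h_{a-r}[\cdot]$ --- more precisely, since we want to pull out the parts that do \emph{not} involve $X$, write $h_a[\tfrac{X}{M}]$ as a sum over how the total degree is split; but the cleaner route is to keep everything in terms of $X/M$ and handle the product $h_a[\tfrac{X}{M}]e_b[\tfrac{X}{M}]e_c[\tfrac{X}{M}]$ directly. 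The key input is part c) of Proposition~\ref{prop:2.4}, which expands a product $h_k[\tfrac{X}{M}]e_{n-k}[\tfrac{X}{M}]$ in the $\{\TH_\mu\}$ basis with coefficient $e_k[B_\mu]/w_\mu$. Since here we have a \emph{triple} product, I expect to need to first merge two of the three factors. Writing $h_a[\tfrac{X}{M}]e_b[\tfrac{X}{M}] = \sum_{\text{stuff}} \ldots$ is awkward because this is not directly one of the listed expansions; instead I would merge $e_b^* e_c^*$ using the dual Pieri-type identity, or --- more simply --- peel off the constant-in-$X$ pieces of $h_a^*$ and $e_b^*$ one plethystic variable at a time.

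Concretely, the cleanest approach: introduce an auxiliary variable and write $h_a\big[\tfrac{X}{M}\big] = \sum_{r=0}^a h_{a-r}\big[\tfrac{1}{M}\big]\,\tilde h_r$ where I split $\tfrac{X}{M} = \tfrac{X-1}{M} + \tfrac{1}{M}$... but that is not quite a clean split either. The honest move is: we are ultimately going to evaluate against $D_\nu = MB_\nu - 1$, and the combination $e_{a-r}[\tfrac1M]h_{b-s}[\tfrac1M]$ appearing on the right-hand side of \eqref{eq:3.4} signals that we should expand $h_a^* = \sum_{r} e_{a-r}[\tfrac1M]\,(\text{something}_r)$ and $e_b^* = \sum_s h_{b-s}[\tfrac1M]\,(\text{something}_s)$ via the plethystic identities $h_a[\tfrac{X}{M}] = \sum_r h_r[\tfrac{X-\epsilon}{M}\cdot(\ldots)]$... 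Let me restate what I would actually do: use that for a degree-$r$ symmetric function, $f[\tfrac{X}{M}]$ can be re-expanded, and exploit the generating-function identity $\OM[\tfrac{Z}{M}] = \OM[\tfrac{Z}{M}]\cdot 1$ trivially, so that $\sum_a h_a[\tfrac XM]z^a = \OM[\tfrac{zX}{M}]$. Then $\OM[\tfrac{zX}{M}] = \prod(\ldots)$, and the product $h_a^* e_b^* e_c^*$ comes from extracting coefficients in three generating variables of $\OM[\tfrac{z_1 X}{M}]\,\OM[-\tfrac{\epsilon z_2 X}{M}]\,\OM[-\tfrac{\epsilon z_3 X}{M}]$ (using \eqref{eq:2.7} to write $e_b[Y] = (-1)^b h_b[-\epsilon Y]$-type relations, carefully). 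Combining the $\OM$'s into $\OM[\tfrac{X}{M}(z_1 - \epsilon z_2 - \epsilon z_3)]$ and then using Proposition~\ref{prop:2.4}a) (which says $e_N[\tfrac{X}{M}] = \sum_{\mu\vdash N}\TH_\mu[X]/w_\mu$) after extracting the degree-$(a+b+c)$ part in $X$.

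So the step-by-step plan is: (1) convert each of $h_a^*$, $e_b^*$, $e_c^*$ to its generating function in an auxiliary variable, merge into a single $\OM[\tfrac{X}{M}\cdot(\text{linear in }z_i)]$, and identify the degree-$(a+b+c)$-in-$X$ part as $\sum_\mu c_\mu(z_1,z_2,z_3)\TH_\mu[X]/w_\mu$ where $c_\mu$ collects the coefficient; (2) apply $\nabla$, which multiplies the $\mu$-term by $T_\mu$; (3) recognize that $\nabla \TH_\mu[X]/w_\mu$ combined with the evaluation machinery produces $e_{a+b+c}[\tfrac{X D_\nu}{M}]$ after re-summing --- here I would use Proposition~\ref{prop:2.5} or its consequence \eqref{eq:2.24}, together with the symmetry identity \eqref{eq:2.13}b) relating $\TH_\alpha[D_\beta]/T_\alpha$ to $\TH_\beta[D_\alpha]/T_\beta$, to flip the roles of the summation partition and $\mu$; (4) peel off the $X$-free factors: the terms $e_{a-r}[\tfrac1M]$ and $h_{b-s}[\tfrac1M]$ emerge precisely from the pieces of $h_a[\tfrac{X}{M}]$ and $e_b[\tfrac{X}{M}]$ that, upon the substitution that sets the "$X$-part" to have bounded degree, leave behind $e_{a-r}[\tfrac1M]$, $h_{b-s}[\tfrac1M]$ --- i.e. from the binomial-type expansion $h_a[\tfrac{X}{M}]$ when one isolates a degree-$r$ "active" part. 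The $(-1)^{n-r-s}$ sign with $n = a+b+c-m$... wait, here $n$ should be read as in the lemma's own context; I would track it as $(-1)^{(a+b+c)-r-s}$ coming from the $\epsilon$'s in the $e$-to-$h$ conversions via \eqref{eq:2.7}, being careful that $e_c^*$ contributes no leftover constant factor (hence no sum over a third index --- the $c$ factor is entirely "active", which is why $e_{a+b+c}[\tfrac{XD_\nu}{M}]$ carries the full degree).

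The main obstacle, I expect, is the bookkeeping in steps (3)--(4): correctly passing from $\nabla\TH_\mu$ evaluated/paired against the remaining $e$'s to the clean closed form $\sum_{\nu\vdash r+s} e_r[B_\nu]\,e_{a+b+c}[\tfrac{XD_\nu}{M}]/w_\nu$, and in particular getting the index of summation to be $\nu\vdash r+s$ (not $\vdash a+b$ or $\vdash$ something larger). This is where Proposition~\ref{prop:2.4}c) must be invoked with the \emph{right} $k$ and $n$: the factor $e_r[B_\nu]$ is the signature of $h_?[\tfrac{X}{M}]e_?[\tfrac{X}{M}] = \sum_\nu e_r[B_\nu]\TH_\nu/w_\nu$, forcing $\nu \vdash r+s$ where $r$ counts the $h$-degree and $s$ the $e$-degree among the "active" parts coming from $h_a^*$ and $e_b^*$ respectively, while the whole $e_c^*$ sits in the other slot. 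I would double-check this merging by testing small cases ($a=b=c=1$, or $c=0$ to recover a known two-factor identity from \cite{GXZ:2011}) before trusting the general manipulation, and I would be especially careful with the $\epsilon$ and sign conventions in \eqref{eq:2.5}--\eqref{eq:2.7}, since a misplaced $\epsilon$ changes $e \leftrightarrow h$ and corrupts the whole formula. The rest --- generating-function bookkeeping and coefficient extraction --- is routine once the combinatorial skeleton is fixed.
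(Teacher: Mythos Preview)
Your proposal names every tool that the paper actually uses --- Proposition~\ref{prop:2.4}c), the reciprocity \eqref{eq:2.13}b), Proposition~\ref{prop:2.5}, and the Cauchy identity \eqref{eq:2.12} --- but the order in which you deploy them is inside-out, and the generating-function route through $\OM\big[\tfrac{X}{M}(z_1-\epsilon z_2-\epsilon z_3)\big]$ is a detour that will not hand you the coefficients you want without eventually forcing you back onto the paper's path anyway.

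The paper's argument is organized around one observation you circle but never commit to: by \eqref{eq:2.23} and the orthogonality \eqref{eq:2.11},
\[
\nabla\, h_a^* e_b^* e_c^* \;=\; \sum_{\mu\vdash n} \frac{T_\mu\,\TH_\mu[X]}{w_\mu}\,\LL \TH_\mu,\, e_a h_b h_c\RR,
\qquad n=a+b+c.
\]
Everything then hinges on computing the Hall pairing $\LL \TH_\mu,\, e_a h_b h_c\RR$, and this is exactly what Proposition~\ref{prop:2.5} is built for (applied with $f=e_a h_b$, not merely through its corollary \eqref{eq:2.24}): it rewrites the pairing as $\nabla^{-1}\big(h_a[\tfrac{X-\epsilon}{M}]\,e_b[\tfrac{X-\epsilon}{M}]\big)\big|_{X\to D_\mu}$. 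The factors $e_{a-r}[\tfrac1M]$ and $h_{b-s}[\tfrac1M]$ now drop out in one line from the addition formula on the alphabet $\tfrac{X-\epsilon}{M}=\tfrac{X}{M}+\tfrac{-\epsilon}{M}$; you do not need to reverse-engineer them from the target. What remains is $\nabla^{-1}\big(h_r[\tfrac{X}{M}]e_s[\tfrac{X}{M}]\big)\big|_{X\to D_\mu}$, and \emph{this} (size $r+s$, not size $n$) is where Proposition~\ref{prop:2.4}c) enters, producing the sum over $\nu\vdash r+s$ with weight $e_r[B_\nu]$. The flip \eqref{eq:2.13}b) is then applied to $\TH_\nu[D_\mu]/T_\nu$, which is the source of the sign $(-1)^{n-r-s}$; the resulting $T_\mu^{-1}$ cancels the $T_\mu$ from $\nabla$, and the outer sum over $\mu$ collapses via \eqref{eq:2.12} into $e_n\big[\tfrac{XD_\nu}{M}\big]$.

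The concrete gap in your plan is therefore this: you try to invoke Proposition~\ref{prop:2.4} at the \emph{outer} level (partitions of $n$) and then hope to ``flip'' your way to $\nu\vdash r+s$, whereas the mechanism runs the other way --- Proposition~\ref{prop:2.5} first strips off the $-\epsilon/M$ pieces and reduces the degree to $r+s$, and only then does Proposition~\ref{prop:2.4}c) explain why the inner partition has that size. Reorder your steps accordingly and the obstacle you flag in your step~(3)--(4) disappears; the sign tracking you worry about becomes a single application of \eqref{eq:2.13}b) rather than $\epsilon$-bookkeeping across three generating variables.
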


\begin{proof}
Setting $n=a+b+c$ and using \eqref{eq:2.23} we obtain the expansion 
\begin{equation}
\nabla h_a^*e_b^*e_{n-a-b}^*\ses
 \sum_{\mu\part n} {T_\mu\TH_\mu[X;q,t]\over w_\mu}\LL\TH_\mu\scs e_ah_bh_{n-a-b}\RR.
\label{eq:3.5}
\end{equation}
Now Proposition \ref{prop:2.5} gives
\begin{align}
\LL\TH_\mu\scs e_ah_bh_{n-a-b}\RR
&\ses 
\nabla^{-1} h_a[\tttt{X-\eee\over M}]e_b[\tttt{X-\eee\over M}]
\Big|_{X\RA D_\mu}\nonumber\\
&\ses 
\sum_{r=0}^a\sum_{s=0}^b e_{a-r}[\tttt{1\over M}]
h_{b-s}[\tttt{1\over M}]
\nabla^{-1}h_r[\tttt{X \over M}]e_s[\tttt{X \over M}]
\Big|_{X\RA D_\mu}\label{eq:3.6}
\end{align}
and again Proposition \ref{prop:2.5} gives
\begin{align}
\nabla^{-1}h_r[\tttt{X \over M}]e_s[\tttt{X \over M}]
\Big|_{X\RA D_\mu}
&= \sum_{\nu\part r+s}
{T_\nu^{-1}\TH_\nu[D_\mu;q,t]\over w_\nu}
\LL \TH_\nu\scs e_rh_s\RR\nonumber\\
\hbox{(by \eqref{eq:2.13} a) and \eqref{eq:2.24})}&=
(-1)^{n-r-s}\sum_{\nu\part r+s}
{T_\mu^{-1}\TH_\mu[D_\nu;q,t]\over w_\nu}
\ssp e_r[B_\nu]~.\label{eq:3.7}
\end{align}
Combining \eqref{eq:3.5}, \eqref{eq:3.6}  and \eqref{eq:3.7} we can thus write
\begin{align*}
\nabla h_a^*e_b^*e_{n-a-b}^*
&\ses
 \sum_{\mu\part n} {T_\mu\TH_\mu[X;q,t]\over w_\mu}\sum_{r=0}^a\sum_{s=0}^b e_{a-r}[\tttt{1\over M}]
h_{b-s}[\tttt{1\over M}]
(-1)^{n-r-s}
\sum_{\nu\part r+s}
{T_\mu^{-1}\TH_\mu[D_\nu;q,t]\over w_\nu}
e_r[B_\nu] \\
&\ses
\sum_{r=0}^a\sum_{s=0}^b  
e_{a-r}[\tttt{1\over M}]h_{b-s}[\tttt{1\over M}]
(-1)^{n-r-s}
\sum_{\nu\part r+s} {e_r\big[B_\nu  \big]\over w_\nu}
 \sum_{\mu\part n} { \TH_\mu[X;q,t]\TH_\mu[D_\nu;q,t]\over w_\mu}
\end{align*}
or better
$$
\nabla h_a^*e_b^*e_{n-a-b}^*
\ses
\sum_{r=0}^a\sum_{s=0}^b  
e_{a-r}[\tttt{1\over M}]h_{b-s}[\tttt{1\over M}]
(-1)^{n-r-s}
\sum_{\nu\part r+s} {e_r\big[B_\nu  \big]e_n\big[{XD_\nu\over M}\big]
\over w_\nu}~.
$$
This completes our proof.
\end{proof}

Our next task is to compute the following expression
\begin{equation}
\GG_{m,a,b,n}[X,q,t]\ses \BC_m^* \nabla h_a^*e_b^*e_{n-a-b}^*~.
\label{eq:3.8}
\end{equation}

In view of \eqref{eq:3.4}  we need the following auxiliary result.
\sas
\sas

\begin{lemma}\label{lemma:3.2}
\begin{equation}
\BC_m^*
  e_n\big[\tttt{XD_\nu\over M  }\big]
  \, =\, 
(-1)^{m-1}\sum_{u=m}^n e_{n-u}\big[\tttt{XD_\nu\over M}\big]
e_{u-m}\big[\tttt{   X\over  (1-t)} \big]t^{u-1 }
   M \sum_{\tau\RA \nu }c_{\nu\tau}(\tttt{T_\nu\over T_\tau})^{u-1}
   - 
\chi(m=1) e_{n-1}\big[\tttt{XD_\nu\over M}\big]
\label{eq:3.9}
\end{equation}
\end{lemma}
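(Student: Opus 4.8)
The plan is to apply the explicit plethystic formula \eqref{eq:3.1} for $\BC_m^*$ directly to the expression $e_n\big[\tttt{XD_\nu\over M}\big]$, track the contributions order by order in the auxiliary variable $z$, and then recognize the resulting coefficient sum as a Pieri-type sum over $\tau\RA\nu$ via \eqref{eq:2.17}. First I would recall that $\BC_m^* P[X] = (\tttt{-1\over q})^{m-1} P\big[X-\tttt{\eee M\over z}\big]\,\OM\big[\tttt{-\eee zX\over q(1-t)}\big]\big|_{z^{-m}}$, so I need the expansion of $e_n\big[\tttt{(X-\eee M/z)D_\nu\over M}\big]$. Using the addition formula for $e_n$ this is $\sum_{j=0}^n e_{n-j}\big[\tttt{XD_\nu\over M}\big]\, e_j\big[-\tttt{\eee D_\nu\over z}\big]$, and since $e_j[-\eee Y] = h_j[Y]$ (by \eqref{eq:2.7}, as $e_j$ is homogeneous of degree $j$), the $z$-dependent factor is $z^{-j}h_j[D_\nu]$. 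Combining with the factor $\OM\big[\tttt{-\eee zX\over q(1-t)}\big] = \sum_{i\ge 0} z^i e_i\big[\tttt{X\over q(1-t)}\big]$ from \eqref{eq:3.3}, and extracting the coefficient of $z^{-m}$, forces $i - j = -m$, i.e. $i = j-m$; so only $j\ge m$ contributes and I get
\[
\BC_m^* e_n\big[\tttt{XD_\nu\over M}\big] = \Big(\tttt{-1\over q}\Big)^{m-1}\sum_{j=m}^n e_{n-j}\big[\tttt{XD_\nu\over M}\big]\, h_j[D_\nu]\, e_{j-m}\big[\tttt{X\over q(1-t)}\big].
\]
Pulling the $q$'s out of the plethystic bracket, $e_{j-m}\big[\tttt{X\over q(1-t)}\big] = q^{-(j-m)} e_{j-m}\big[\tttt{X\over 1-t}\big]$, and collecting powers of $q$ gives an overall $(-1)^{m-1} q^{-(j-1)}$ times $e_{n-j}\big[\tttt{XD_\nu\over M}\big]\, e_{j-m}\big[\tttt{X\over 1-t}\big]\, h_j[D_\nu]$.

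The crux is then to identify $h_j[D_\nu]$ (equivalently $q^{-(j-1)}h_j[D_\nu]$) with the Pieri sum appearing on the right-hand side of \eqref{eq:3.9}. This is exactly where \eqref{eq:2.17} enters: for $j\ge 1$ we have $\sum_{\tau\RA\nu} c_{\nu\tau}(T_\nu/T_\tau)^j = \tttt{tq\over M} h_{j+1}\big[D_\nu/tq\big]$, and since $h_{j+1}$ is homogeneous of degree $j+1$, $h_{j+1}[D_\nu/tq] = (tq)^{-(j+1)} h_{j+1}[D_\nu]$. Re-indexing with $u=j$ (so $h_{j+1}\leadsto h_{u+1}$, or shifting so that the exponent matches $u-1$ as in the statement) and solving for $h_j[D_\nu]$ in terms of $\sum_{\tau\RA\nu} c_{\nu\tau}(T_\nu/T_\tau)^{u-1}$, one obtains precisely the factor $t^{u-1} M \sum_{\tau\RA\nu} c_{\nu\tau}(T_\nu/T_\tau)^{u-1}$ up to the bookkeeping of powers of $q$ and $t$, with $u$ running from $m$ to $n$. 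I expect the main obstacle to be getting this power bookkeeping exactly right — matching the $q^{-(j-1)}$ coming from $\BC_m^*$ against the $(tq)$-powers produced by \eqref{eq:2.17}, and confirming that they cancel to leave only $t^{u-1}$ as claimed; this is the kind of step the authors describe as "forced on us by the complexities of the problem."

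Finally I would handle the $j=0$ (i.e. $u=0$, the $k=0$ case of \eqref{eq:2.17}) term separately. When $m\ge 2$ there is no $j=0$ term in the sum since $j\ge m\ge 2>0$, so nothing extra appears. When $m=1$ the constraint is $j\ge 1$ as well (the sum starts at $j=m=1$), but the Pieri identity \eqref{eq:2.17} for the exponent $u-1=0$ gives $B_\nu$ rather than a multiple of $h_1[D_\nu]=D_\nu=MB_\nu-1$; the discrepancy between $h_1[D_\nu]$ and $MB_\nu$ is exactly $-1$, which after multiplying by $e_{n-1}\big[\tttt{XD_\nu\over M}\big]$ and the prefactors accounts for the correction term $-\chi(m=1)\,e_{n-1}\big[\tttt{XD_\nu\over M}\big]$. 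Assembling the main sum and this boundary correction yields \eqref{eq:3.9}. The whole argument is a bounded computation once the three inputs — the addition formula for $e_n$, the sign rule $e_j[-\eee Y]=h_j[Y]$, and the Pieri summation \eqref{eq:2.17} — are in place, so I would present it as a direct verification rather than anything requiring a clever idea.
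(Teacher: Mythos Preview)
Your proposal is correct and follows essentially the same route as the paper's own proof: apply the plethystic formula \eqref{eq:3.1} for $\BC_m^*$, expand $e_n$ by the addition formula, use $e_j[-\eee D_\nu]=h_j[D_\nu]$, extract the $z^{-m}$ coefficient, and then invoke \eqref{eq:2.17} to rewrite $h_u[D_\nu]$ as $(tq)^{u-1}M\sum_{\tau\RA\nu}c_{\nu\tau}(T_\nu/T_\tau)^{u-1}$, with the $u=1$ boundary case producing the $-\chi(m=1)e_{n-1}[\tttt{XD_\nu\over M}]$ correction. The ``power bookkeeping'' you flag as the main obstacle is exactly what the paper carries out explicitly, and it does cancel to leave $t^{u-1}$ as you anticipate.
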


\begin{proof}
Recalling   that 
$$
\BC_a^* P[X]\ses(\tttt{ -1\over q})^{a-1} 
P\big[X-\tttt{\eee M\over z}  \big]
\OM\big[\tttt{-\eee zX\over q(1-t)} \big]
\Big|_{z^{-a}},
$$
we get
\begin{align*}
 \BC_m^*
  e_n\big[\tttt{XD_\nu\over M  }\big]
&=
(\tttt{ -1\over q})^{m-1}
 e_n\big[\tttt{(X-\tttt{\eee M\over z})D_\nu\over M  }\big]
 \OM\big[\tttt{-\eee zX\over q(1-t)} \big]
\Big|_{z^{-m}}\\
&=
(\tttt{ -1\over q})^{m-1} e_n\big[\big(\tttt{ X\over M}-\tttt{\eee  \over z}\big)D_\nu    \big] 
 \OM\big[\tttt{-\eee zX\over q(1-t)} \big]
\Big|_{z^{-m}}\\
 &=(\tttt{ -1\over q})^{m-1}\sum_{u=0}^n e_{n-u}\big[\tttt{XD_\nu\over M}\big]
e_{u}\big[\tttt{  -\eee D_\nu } \big] 
\OM\big[\tttt{-\eee zX\over q(1-t)} \big]
\Big|_{z^{u-m}}\\
\hbox{(by \eqref{eq:2.7}, \eqref{eq:2.10} and  \eqref{eq:3.3})} &=(\tttt{ -1\over q})^{m-1}\sum_{u=0}^n e_{n-u}\big[\tttt{XD_\nu\over M}\big]
e_{u-m}\big[\tttt{ X\over q(1-t)} \big]
h_{u}\big[\tttt{   M B_\nu-1 } \big] \\
\hbox{(by \eqref{eq:2.17})} &=(\tttt{ -1\over q})^{m-1}\sum_{u=m}^n e_{n-u}\big[\tttt{XD_\nu\over M}\big]
e_{u-m}\big[\tttt{   X\over  (1-t)} \big](q)^{m-u}
\Big( (tq)^{u-1}M \sum_{\tau\RA \nu }c_{\nu\tau}(\tttt{T_\nu\over T_\tau})^{u-1} 
\sms \chi(u=1)\Big)\\
 &=(-1)^{m-1}\sum_{u=m}^n e_{n-u}\big[\tttt{XD_\nu\over M}\big]
e_{u-m}\big[\tttt{   X\over  (1-t)} \big]t^{u-1 }
   M \sum_{\tau\RA \nu }c_{\nu\tau}(\tttt{T_\nu\over T_\tau})^{u-1}
   \sms \\
&\bigsp\bigsp\bigsp\bigsp
\sms 
(-1)^{m-1}\sum_{u=m}^n 
e_{n-u}\big[\tttt{XD_\nu\over M}\big]
e_{u-m}\big[\tttt{   X\over  (1-t)} \big] 
 \chi(u=1) ~.
\end{align*}
This equality proves \eqref{eq:3.9}.
\end{proof}

And now the first special case peals off.
\sas
\sas

\begin{prop}\label{prop:3.1}
\begin{equation}
\GG_{m,a,b,n}[X,q,t]\ses \BC_m^* \nabla h_a^*e_b^*e_{n-a-b}^*=\GG_{m,a,b,n}^{(1)}[X,q,t]
\sps \chi(m=1)\nabla h_a^*e_b^*e_{n-1-a-b}^*
\label{eq:3.10}
\end{equation}
with 
\begin{align}
\GG_{m,a,b,n}^{(1)}[X,q,t]
&\ses  
(-1)^{m-1}\sum_{r=0}^a\sum_{s=0}^b  
e_{a-r}[\tttt{1\over M}]h_{b-s}[\tttt{1\over M}]
(-1)^{n-r-s}
\ess\ess
\times\nonumber\\
&\ess\ess\ess\ess\ess\ess\ess
\times \sum_{\nu\part r+s} 
{e_r\big[B_\nu  \big]
\over w_\nu}
\sum_{u=m}^n e_{n-u}\big[\tttt{XD_\nu\over M}\big]
e_{u-m}\big[\tttt{   X\over  (1-t)} \big]t^{u-1 }
   M \sum_{\tau\RA \nu }c_{\nu\tau}(\tttt{T_\nu\over T_\tau})^{u-1}~.
\label{eq:3.11}
\end{align} 
\end{prop}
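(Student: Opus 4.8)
The plan is to substitute the expansion of Lemma~\ref{lemma:3.1} for $\nabla h_a^*e_b^*e_{n-a-b}^*$ into the definition \eqref{eq:3.8} of $\GG_{m,a,b,n}$ and then apply $\BC_m^*$ term by term. Since $\BC_m^*$ acts plethystically on the $X$-variables only, and the factors $e_{a-r}[\tttt{1\over M}]$, $h_{b-s}[\tttt{1\over M}]$, $e_r[B_\nu]/w_\nu$ in \eqref{eq:3.4} are scalars (independent of $X$), the operator $\BC_m^*$ passes through the sums over $r,s,\nu$ and lands on the single $X$-dependent factor $e_{a+b+c}\big[{XD_\nu\over M}\big] = e_n\big[{XD_\nu\over M}\big]$. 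Thus $\GG_{m,a,b,n}$ becomes
\[
\sum_{r=0}^a\sum_{s=0}^b e_{a-r}[\tttt{1\over M}]h_{b-s}[\tttt{1\over M}](-1)^{n-r-s}\sum_{\nu\part r+s}\frac{e_r[B_\nu]}{w_\nu}\,\BC_m^* e_n\big[\tttt{XD_\nu\over M}\big],
\]
and the whole computation reduces to knowing $\BC_m^* e_n\big[{XD_\nu\over M}\big]$, which is exactly the content of Lemma~\ref{lemma:3.2}.

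Next I would insert the right-hand side of \eqref{eq:3.9} into the display above. The first piece of \eqref{eq:3.9}, namely
\[
(-1)^{m-1}\sum_{u=m}^n e_{n-u}\big[\tttt{XD_\nu\over M}\big]e_{u-m}\big[\tttt{X\over(1-t)}\big]t^{u-1}M\sum_{\tau\RA\nu}c_{\nu\tau}(\tttt{T_\nu\over T_\tau})^{u-1},
\]
when plugged back in, produces precisely the quantity $\GG_{m,a,b,n}^{(1)}[X,q,t]$ as written in \eqref{eq:3.11}: the sums over $r,s,\nu$ from Lemma~\ref{lemma:3.1} and the scalar prefactors match verbatim, and the sum over $u$ and $\tau$ comes from Lemma~\ref{lemma:3.2}. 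The second piece of \eqref{eq:3.9}, namely $-\chi(m=1)\,e_{n-1}\big[{XD_\nu\over M}\big]$, contributes, after reinserting the $r,s,\nu$ sums, the quantity
\[
\chi(m=1)\Big(-\sum_{r=0}^a\sum_{s=0}^b e_{a-r}[\tttt{1\over M}]h_{b-s}[\tttt{1\over M}](-1)^{n-r-s}\sum_{\nu\part r+s}\frac{e_r[B_\nu]}{w_\nu}\,e_{n-1}\big[\tttt{XD_\nu\over M}\big]\Big).
\]
The final step is to recognize the bracketed sum here: comparing with \eqref{eq:3.4} applied with $n$ replaced by $n-1$ (so that $a+b+c = n-1$, i.e. the last index in the homogeneous/elementary product is $n-1-a-b$), the sign $(-1)^{n-r-s}$ differs by one factor of $-1$ from the $(-1)^{(n-1)-r-s}$ appearing there, so the bracketed sum equals $-\nabla h_a^*e_b^*e_{n-1-a-b}^*$, and the two minus signs cancel to give exactly the asserted extra term $+\chi(m=1)\,\nabla h_a^*e_b^*e_{n-1-a-b}^*$ in \eqref{eq:3.10}.

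The only genuinely delicate point is the bookkeeping of signs and degree shifts in that last step: one must be careful that the degree-$(n-1)$ instance of Lemma~\ref{lemma:3.1} is legitimate (it requires $a,b,c\ge 0$ with $a+b+c=n-1$, which holds since the $\chi(m=1)$ forces $n-a-b-c = m = 1$) and that the exponent in the sign $(-1)^{n-r-s}$ versus $(-1)^{(n-1)-r-s}$ is tracked correctly, since an off-by-one error would flip the sign of the correction term. Everything else is a direct substitution of two already-proved lemmas, so I expect no structural obstacle beyond this careful matching of indices and signs.
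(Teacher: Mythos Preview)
Your argument is correct and follows the paper's proof essentially line for line: expand via Lemma~\ref{lemma:3.1}, push $\BC_m^*$ through the $X$-independent scalars onto $e_n[XD_\nu/M]$, invoke Lemma~\ref{lemma:3.2}, and recognize the $\chi(m=1)$ piece by reapplying Lemma~\ref{lemma:3.1} in degree $n-1$ after the sign shift $(-1)^{n-r-s}=-(-1)^{(n-1)-r-s}$. One small slip to fix in your parenthetical: here $n=a+b+c$ throughout, so $n-a-b-c=0$ always and is not forced to equal $m$; the legitimacy of the degree-$(n-1)$ instance of Lemma~\ref{lemma:3.1} is simply that the new third index is $n-1-a-b=c-1$, which is $\ge 0$ when $c\ge 1$ and yields a vanishing term (since $e_{-1}=0$) when $c=0$.
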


\begin{proof}
By  Lemma \ref{lemma:3.1} we get
$$
\nabla h_a^*e_b^*e_{n-a-b}^*
\ses
\sum_{r=0}^a\sum_{s=0}^b  
e_{a-r}[\tttt{1\over M}]h_{b-s}[\tttt{1\over M}]
(-1)^{n-r-s}
\sum_{\nu\part r+s} {e_r\big[B_\nu  \big]e_n\big[{XD_\nu\over M}\big]
\over w_\nu}~.
$$
Now applying $\BC_m^*$ on both sides of the equation, \eqref{eq:3.8} becomes
$$
\GG_{m,a,b,n}[X,q,t]\ses  
\sum_{r=0}^a\sum_{s=0}^b  
e_{a-r}[\tttt{1\over M}]h_{b-s}[\tttt{1\over M}]
(-1)^{n-r-s}
\sum_{\nu\part r+s} 
{e_r\big[B_\nu  \big]
\over w_\nu}
\BC_m^*e_n\big[{XD_\nu\over M}\big]~.
$$
Since Lemma \ref{lemma:3.2} gives
$$
\BC_m^*
  e_n\big[\tttt{XD_\nu\over M  }\big]
  \ses 
(-1)^{m-1}\sum_{u=m}^n e_{n-u}\big[\tttt{XD_\nu\over M}\big]
e_{u-m}\big[\tttt{   X\over  (1-t)} \big]t^{u-1 }
   M \sum_{\tau\RA \nu }c_{\nu\tau}(\tttt{T_\nu\over T_\tau})^{u-1}
   \sms 
\chi(m=1) e_{n-1}\big[\tttt{XD_\nu\over M}\big]~,
 $$
we obtain
\begin{equation}
\GG_{m,a,b,n}[X,q,t]=\GG_{m,a,b,n}^{(1)}[X,q,t]
\sps \GG_{m,a,b,n}^{(2)}[X,q,t]
\label{eq:3.12}
\end{equation}

\noindent
with 
\begin{align}
\GG_{m,a,b,n}^{(1)}[X,q,t]
&\ses  
(-1)^{m-1}\sum_{r=0}^a\sum_{s=0}^b  
e_{a-r}[\tttt{1\over M}]h_{b-s}[\tttt{1\over M}]
(-1)^{n-r-s}
\ess\ess
\times\nonumber\\
&\ess\ess\ess\ess\ess\ess\ess
\times \sum_{\nu\part r+s} 
{e_r\big[B_\nu  \big]
\over w_\nu}
\sum_{u=m}^n e_{n-u}\big[\tttt{XD_\nu\over M}\big]
e_{u-m}\big[\tttt{   X\over  (1-t)} \big]t^{u-1 }
   M \sum_{\tau\RA \nu }c_{\nu\tau}(\tttt{T_\nu\over T_\tau})^{u-1}
\label{eq:3.13}
\end{align}
and
\begin{equation}
 \GG_{m,a,b,n}^{(2)}[X,q,t]\ses  
\sms \chi(m=1)\sum_{r=0}^a\sum_{s=0}^b  
e_{a-r}[\tttt{1\over M}]h_{b-s}[\tttt{1\over M}]
(-1)^{n-1-r-s}(-1)
\sum_{\nu\part r+s} 
{e_r\big[B_\nu  \big]
\over w_\nu}
 e_{n-1}\big[\tttt{XD_\nu\over M}\big]~.
\label{eq:3.14}
\end{equation}
Now recall that Lemma \ref{lemma:3.1} gave
$$
\nabla h_a^*e_b^*e_{n-a-b}^*
\ses
\sum_{r=0}^a\sum_{s=0}^b  
e_{a-r}[\tttt{1\over M}]h_{b-s}[\tttt{1\over M}]
(-1)^{n-r-s}
\sum_{\nu\part r+s} {e_r\big[B_\nu  \big]e_n\big[{XD_\nu\over M}\big]
\over w_\nu}~.
$$
Thus, \eqref{eq:3.14} is none other than
\begin{equation}
 \GG_{m,a,b,n}^{(2)}[X,q,t]\ses  
\chi(m=1)\nabla h_a^*e_b^*e_{n-1-a-b}^*
\label{eq:3.15}
\end{equation}
and we see that \eqref{eq:3.12}, \eqref{eq:3.13} and \eqref{eq:3.14} complete our proof.
\end{proof}

\noindent
Our next task is to obtain the desired expression for the polynomial 
in \eqref{eq:3.11}. That is,
\begin{align*}
\GG_{m,a,b,n}^{(1)}[X,q,t]
&\ses  
(-1)^{m-1}\sum_{r=0}^a\sum_{s=0}^b  
e_{a-r}[\tttt{1\over M}]h_{b-s}[\tttt{1\over M}]
(-1)^{n-r-s}
\ess\ess
\times\\
&\ess\ess\ess\ess\ess\ess\ess
\times \sum_{\nu\part r+s} 
{e_r\big[B_\nu  \big]
\over w_\nu}
\sum_{u=m}^n e_{n-u}\big[\tttt{XD_\nu\over M}\big]
e_{u-m}\big[\tttt{   X\over  (1-t)} \big]t^{u-1 }
   M \sum_{\tau\RA \nu }c_{\nu\tau}(\tttt{T_\nu\over T_\tau})^{u-1}~.
\end{align*}
This will be obtained by a sequence of 
transformations. 

To begin, rearranging the order of summations  we get 
\begin{align*}
&\GG_{m,a,b,n}^{(1)}[X,q,t]=\\
&\ses  
(-1)^{m-1}\sum_{r=0}^a\sum_{s=0}^b  
e_{a-r}[\tttt{1\over M}]h_{b-s}[\tttt{1\over M}]
(-1)^{n-r-s}\sum_{u=m}^n
e_{u-m}\big[\tttt{   X\over  (1-t)} \big]t^{u-1 }
\ess\ess
\times\\
&\ess\ess\ess\ess\ess\bigsp\bigsp\bigsp
\times  
\sum_{\tau\part r+s-1}{1\over w_\tau} \sum_{\nu\LA \tau}
M
\tttt {  w_\tau 
\over w_\nu}c_{\nu\tau}(\tttt{T_\nu\over T_\tau})^{u-1}
 e_r\big[B_\nu  \big] e_{n-u}\big[\tttt{XD_\nu\over M}\big] \\
  &\ses  
(-1)^{m-1}\sum_{r=0}^a\sum_{s=0}^b  
e_{a-r}[\tttt{1\over M}]h_{b-s}[\tttt{1\over M}]
(-1)^{n-r-s}\sum_{u=m}^n
e_{u-m}\big[\tttt{   X\over  (1-t)} \big]t^{u-1 }
\ess\ess
\times\\
&\ess\ess\ess\ess\ess\hbox{(by \eqref{eq:2.16})}\ess\ess\bigsp
\times  
\sum_{\tau\part r+s-1}{1\over w_\tau} \sum_{\nu\LA \tau}
 d_{\nu\tau}(\tttt{T_\nu\over T_\tau})^{u-1}
 e_r\big[B_\tau +\tttt{T_\nu\over  T_\tau} \big] e_{n-u}
   \big[\tttt{X D_\tau  \over M}+X\tttt{T_\nu\over  T_\tau}\big]  \\
&\ses  
(-1)^{m-1}\sum_{r=0}^a\sum_{s=0}^b  
e_{a-r}[\tttt{1\over M}]h_{b-s}[\tttt{1\over M}]
(-1)^{n-r-s}\sum_{u=m}^n
e_{u-m}\big[\tttt{   X\over  (1-t)} \big]t^{u-1 }
\ess\ess
\times\\
&\ess\ess\ess\ess\ess\ess\ess
\times  
\sum_{\tau\part r+s-1}{1\over w_\tau} \sum_{\nu\LA \tau}
 d_{\nu\tau}(\tttt{T_\nu\over T_\tau})^{u-1}
\big( 
e_r\big[B_\tau\big] \sps
 e_{r-1}\big[B_\tau\big]  \tttt{T_\nu\over  T_\tau} \big)
\sum_{k=0}^{n-u}
 e_{n-u-k}  \big[\tttt{X D_\tau  \over M}\big] 
 e_{k}  \big[X\big]   (\tttt{T_\nu\over  T_\tau})^k
\end{align*}
and this is best rewritten as
\begin{align*}
&\GG_{m,a,b,n}^{(1)}[X,q,t]\\
&\ses  
(-1)^{m-1}\sum_{r=0}^a\sum_{s=0}^b  
e_{a-r}[\tttt{1\over M}]h_{b-s}[\tttt{1\over M}]
(-1)^{n-r-s}\sum_{u=m}^n
e_{u-m}\big[\tttt{   X\over  (1-t)} \big]t^{u-1 }
\ess\ess
\times\\
&\ess\ess\ess\ess\ess\ess\ess
\times  
\sum_{k=0}^{n-u}
 e_k \big[X \big] 
\sum_{\tau\part r+s-1}{1\over w_\tau}
e_{n-u-k}\big[ \tttt{X   D_\tau  \over M} \big]
 \sum_{\nu\LA \tau}
 d_{\nu\tau}(\tttt{T_\nu\over T_\tau})^{u+k-1}
\big( 
e_r\big[B_\tau\big] \sps
 e_{r-1}\big[B_\tau\big]  \tttt{T_\nu\over  T_\tau} \big)~.
\end{align*}

Notice next that the following summation interchanges
$$
\sum_{u=m}^n\sum_{k=0}^{n-u}
\ses \sum_{u=m}^n\sum_{k=0}^{n}\chi(k\le n-u)
\ses \sum_{k=0}^{n} \sum_{u=m}^{n}\chi(u+k\le n  )
$$
give 
\begin{align*}
&\GG_{m,a,b,n}^{(1)}[X,q,t]\\
&\ses  
(-t)^{m-1}\sum_{r=0}^a\sum_{s=0}^b  
e_{a-r}[\tttt{1\over M}]h_{b-s}[\tttt{1\over M}]
(-1)^{n-r-s}\sum_{k=0}^{n} \sum_{u=m}^{n}\chi(u+k\le n )
e_{u-m}\big[\tttt{   X\over  (1-t)} \big]t^{u-m }e_k \big[X \big] \\
&  
\hskip .5in \times  
\sum_{\tau\part r+s-1}{1\over w_\tau}
e_{n-u-k}\big[ \tttt{X   D_\tau  \over M} \big]
 \sum_{\nu\LA \tau}
 d_{\nu\tau}(\tttt{T_\nu\over T_\tau})^{u+k-1}
\big( 
e_r\big[B_\tau\big] \sps
 e_{r-1}\big[B_\tau\big]  \tttt{T_\nu\over  T_\tau} \big).
\end{align*}

\noindent
Making the change of variables $u\RA v=u+k$ we get
\begin{align*}
\GG_{m,a,b,n}^{(1)}[X,q,t]
&\ses  
(-t)^{m-1}\sum_{r=0}^a\sum_{s=0}^b  
e_{a-r}[\tttt{1\over M}]h_{b-s}[\tttt{1\over M}]
(-1)^{n-r-s}\sum_{k=0}^{n} \sum_{v=m+k}^{n} 
e_{v-m-k}\big[\tttt{   tX\over  (1-t)} \big] e_k \big[X \big]\\
&  
\times   
\sum_{\tau\part r+s-1}{1\over w_\tau}
e_{n-v}\big[ \tttt{X   D_\tau  \over M} \big]
 \sum_{\nu\LA \tau}
 d_{\nu\tau}(\tttt{T_\nu\over T_\tau})^{v-1}
\big( 
e_r\big[B_\tau\big] \sps
 e_{r-1}\big[B_\tau\big]  \tttt{T_\nu\over  T_\tau} \big)~.
\end{align*}
Or better yet, since
$$
\sum_{k=0}^{n} \sum_{v=m+k}^{n} 
e_{v-m-k}\big[\tttt{   tX\over  (1-t)} \big] e_k \big[X \big]
\ses
\sum_{v=m }^{n}\sum_{k=0}^{n} \chi(k\le   v-m) 
e_{v-m-k}\big[\tttt{   tX\over  (1-t)} \big] e_k \big[\tttt{(1-t)X\over  (1-t)} \big]
\ses 
 \sum_{v=m }^{n} e_{v-m}\big[\tttt{   X\over  (1-t)} \big]
$$
we have 
\begin{align*}
\GG_{m,a,b,n}^{(1)}[X,q,t]
&\ses  
(-t)^{m-1}\sum_{r=0}^a\sum_{s=0}^b  
e_{a-r}[\tttt{1\over M}]h_{b-s}[\tttt{1\over M}]
(-1)^{n-r-s}
 \sum_{v=m }^{n} e_{v-m}\big[\tttt{   X\over  (1-t)} \big]\\
&  
\times   
\sum_{\tau\part r+s-1}{1\over w_\tau}
e_{n-v}\big[ \tttt{X   D_\tau  \over M} \big]
 \sum_{\nu\LA \tau}
 d_{\nu\tau}(\tttt{T_\nu\over T_\tau})^{v-1}
\big( 
e_r\big[B_\tau\big] \sps
 e_{r-1}\big[B_\tau\big]  \tttt{T_\nu\over  T_\tau} \big)~.
\end{align*}
Now note that since $\tau\part r+s-1$ we must have $r+s-1\ge r$
for $e_r[B_\tau]\neq 0$ and $r\ge 1$  for $e_{r-1}[B_\tau]\neq 0$
We are thus brought to split $\GG_{m,a,b,n}^{(1)}[X,q,t]$ into two 
terms obtained by the corresponding variable changes
$s\RA s+1$ and $r\RA r+1$, obtaining
\begin{equation}
\GG_{m,a,b,n}^{(1)}[X,q,t]\ses \GG_{m,a,b,n}^{(1b)}[X,q,t]
\sps
\GG_{m,a,b,n}^{(1a)}[X,q,t]
\label{eq:3.16}
\end{equation}
with 
\begin{align*}
\GG_{m,a,b,n}^{(1b)}[X,q,t]
&\ses  
(-t)^{m-1}\sum_{r=0}^a\sum_{s=0}^{b-1}  
e_{a-r}[\tttt{1\over M}]h_{b-1-s}[\tttt{1\over M}]
(-1)^{n-1-r-s}
 \sum_{v=m }^{n} e_{v-m}\big[\tttt{   X\over  (1-t)} \big]
 \cr
 &  \bigsp\bigsp
\times   
\sum_{\tau\part r+s}
 {e_r [B_\tau ]  
\over w_\tau}
e_{n-v}\big[ \tttt{X   D_\tau  \over M} \big]
 \sum_{\nu\LA \tau}
 d_{\nu\tau}(\tttt{T_\nu\over T_\tau})^{v-1}
\end{align*}
and
\begin{align*}
\GG_{m,a,b,n}^{(1a)}[X,q,t]
&\ses  
(-t)^{m-1}\sum_{r=0}^{a-1}\sum_{s=0}^{b}  
e_{a-1-r}[\tttt{1\over M}]h_{b-s}[\tttt{1\over M}]
(-1)^{n-1-r-s}
 \sum_{v=m }^{n} e_{v-m}\big[\tttt{   X\over  (1-t)} \big]\\
 &  \bigsp\bigsp
\times   
\sum_{\tau\part r+s}
{e_{r} [B_\tau ]  
\over w_\tau}
e_{n-v}\big[ \tttt{X   D_\tau  \over M} \big]
 \sum_{\nu\LA \tau}
 d_{\nu\tau}(\tttt{T_\nu\over T_\tau})^{v}~.
\end{align*}

Now \eqref{eq:2.18} gives
$$
\sum_{\nu\LA \tau}d_{\nu\tau}(\tttt{T_\nu\over T_\tau})^s\ses (-1)^{s-1} e_{s-1}[D_\tau]\sps \chi(s=0)~,
$$
thus
$$
\sum_{\nu\LA \tau}d_{\nu\tau}(\tttt{T_\nu\over T_\tau})^{v-1}= (-1)^{{v-2} } e_{v-2}[D_\tau]\sps \chi(v=1)
\ess\ess\ess\hbox{and}\ess\ess\ess
\sum_{\nu\LA \tau}d_{\nu\tau}(\tttt{T_\nu\over T_\tau})^{v}= (-1)^{v-1} e_{v-1}[D_\tau]\sps \chi(v=0)~.
$$
Using these identities we get (since $v\ge m\ge 1$)
\begin{align}
\GG_{m,a,b,n}^{(1a)}[X,q,t]
&\ses  
(-t)^{m-1}\sum_{r=0}^{a-1}\sum_{s=0}^{b}  
e_{a-1-r}[\tttt{1\over M}]h_{b-s}[\tttt{1\over M}]
(-1)^{n-1-r-s}
 \sum_{v=m }^{n} e_{v-m}\big[\tttt{   X\over  (1-t)} \big]\nonumber\\
 &  \bigsp\bigsp
\times   
\sum_{\tau\part r+s}
{e_{r} [B_\tau ]  
\over w_\tau}
e_{n-v}\big[ \tttt{X   D_\tau  \over M} \big]
(-1)^{v-1} e_{v-1}[D_\tau]~.
\label{eq:3.17}
\end{align} 
On the other hand, in the previous case we have two terms
\begin{align}
&\GG_{m,a,b,n}^{(1b)}[X,q,t]\nonumber\\
&\ses  
(-t)^{m-1}\sum_{r=0}^a\sum_{s=0}^{b-1}  
e_{a-r}[\tttt{1\over M}]h_{b-1-s}[\tttt{1\over M}]
(-1)^{n-1-r-s}
 \sum_{v=m }^{n} e_{v-m}\big[\tttt{   X\over  (1-t)} \big]\nonumber\\
 &  \bigsp\bigsp \bigsp\bigsp \bigsp
\times   
\sum_{\tau\part r+s}
{e_r [B_\tau ]  
\over w_\tau}
e_{n-v}\big[ \tttt{X   D_\tau  \over M} \big]
 (-1)^{{v } } e_{v-2}[D_\tau]\ess \sps\nonumber\\
 & \ess\ess\ess \sps 
(-t)^{m-1}\sum_{r=0}^a\sum_{s=0}^{b-1}  
e_{a-r}[\tttt{1\over M}]h_{b-1-s}[\tttt{1\over M}]
(-1)^{n-1-r-s}
 \sum_{v=m }^{n} e_{v-m}\big[\tttt{   X\over  (1-t)} \big]\nonumber\\
 &  \bigsp\bigsp\bigsp\bigsp\bigsp\bigsp
\times   
\sum_{\tau\part r+s}
{e_r [B_\tau ]  
\over w_\tau}
e_{n-v}\big[ \tttt{X   D_\tau  \over M} \big]
  \chi(v=1) 
\label{eq:3.18}
\end{align} 
and since $v=1$ forces $m=1$ we  see that the second term  reduces to
$$
(-t)^{0}  \chi(m=1) 
 \sum_{r=0}^a\sum_{s=0}^{b-1}  
e_{a-r}[\tttt{1\over M}]h_{b-1-s}[\tttt{1\over M}]
(-1)^{n-1-r-s}   
\sum_{\tau\part r+s}
{e_r [B_\tau ]  
\over w_\tau}
e_{n-1}\big[ \tttt{X   D_\tau  \over M} \big]~.
$$
Recalling from Lemma \ref{lemma:3.1} that for $a+b+c=n$
$$
\nabla h_a^*e_b^*e_{c}^*
\ses
\sum_{r=0}^a\sum_{s=0}^b  
e_{a-r}[\tttt{1\over M}]h_{b-s}[\tttt{1\over M}]
(-1)^{n-r-s}
\sum_{\nu\part r+s} {e_r\big[B_\nu  \big]
\over w_\nu}e_{n}\big[\tttt{XD_\nu\over M}\big]~,
$$
we immediately recognize that the second  term is none other than
$\nabla h_a^*e_{b-1}^*e_{n-a-b}^*$. 

Thus putting together \eqref{eq:3.12}, \eqref{eq:3.15}, \eqref{eq:3.16}, \eqref{eq:3.17} and \eqref{eq:3.18},
our manipulations carried out in the last three pages have yielded the following final expression for the polynomial 
$\GG_{m,a,b,n}[X,q,t]= \BC_m^* \nabla h_a^*e_b^*e_{n-a-b}^*$.
\sas\sas

\begin{theorem} \label{thm:3.1}
For all integers $a,b\ge0$, $m\ge 1$ and $n>a+b$  we have
\begin{align*}
\BC_m^* \nabla h_a^*e_b^*e_{n-a-b}^* \ses 
t^{m-1}\Phi_{m,a,b,n}^{(1)}[X,q,t] &
\sps
t^{m-1}\Phi_{m,a,b,n}^{(2)}[X,q,t] \ess\sps\\
&
\sps \chi(m=1)\Big(\nabla h_a^*e_b^*e_{n-1-a-b}^*
\sps \nabla h_a^*e_{b-1}^*e_{n-a-b}^*\, .\big)
\end{align*}
with
\begin{align}
\Phi_{m,a,b,n}^{(1)}[X,q,t]
&\ses  
(-1)^{m-1}\sum_{r=0}^{a-1}\sum_{s=0}^{b}  
e_{a-1-r}[\tttt{1\over M}]h_{b-s}[\tttt{1\over M}]
(-1)^{n-1-r-s}
 \sum_{v=m }^{n} e_{v-m}\big[\tttt{   X\over  (1-t)} \big]\nonumber\\
 &  \bigsp\bigsp\bigsp
\times   
\sum_{\tau\part r+s}
{e_{r} [B_\tau ]  
\over w_\tau}
e_{n-v}\big[ \tttt{X   D_\tau  \over M} \big]
(-1)^{v-1} e_{v-1}[D_\tau]
\label{eq:3.19}
\end{align}
and
\begin{align}
\Phi_{m,a,b,n}^{(2)}[X,q,t]
&\ses  
(-1)^{m-1}\sum_{r=0}^a\sum_{s=0}^{b-1}  
e_{a-r}[\tttt{1\over M}]h_{b-1-s}[\tttt{1\over M}]
(-1)^{n-1-r-s}
 \sum_{v=m }^{n} e_{v-m}\big[\tttt{   X\over  (1-t)} \big]\nonumber\\
&\bigsp\bigsp\bigsp 
\times   
\sum_{\tau\part r+s}
{e_r [B_\tau ]  
\over w_\tau}
e_{n-v}\big[ \tttt{X   D_\tau  \over M} \big]
 (-1)^{{v } } e_{v-2}[D_\tau]~.
\label{eq:3.20}
\end{align}
\end{theorem}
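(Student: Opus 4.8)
The plan is to prove Theorem~\ref{thm:3.1} by a direct, bookkeeping-heavy computation starting from the defining formula \eqref{eq:3.8}, namely $\GG_{m,a,b,n}[X,q,t]=\BC_m^*\nabla h_a^*e_b^*e_{n-a-b}^*$. First I would invoke Lemma~\ref{lemma:3.1} to expand $\nabla h_a^*e_b^*e_{n-a-b}^*$ as a double sum over $r,s$ with inner sum over partitions $\nu\part r+s$ of terms $e_r[B_\nu]\,e_n[XD_\nu/M]/w_\nu$. Since $\BC_m^*$ acts linearly and the coefficients $e_{a-r}[1/M]h_{b-s}[1/M]$ are scalars, the whole problem reduces to computing $\BC_m^* e_n[XD_\nu/M]$, which is exactly what Lemma~\ref{lemma:3.2} provides. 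Applying that lemma produces two pieces: the ``generic'' piece $\GG^{(1)}$ carrying the sum over corners $\tau\RA\nu$ with the $c_{\nu\tau}(T_\nu/T_\tau)^{u-1}$ weights, and a $\chi(m=1)$ correction $\GG^{(2)}$ which, by re-reading Lemma~\ref{lemma:3.1} backwards, is immediately recognized to equal $\nabla h_a^*e_b^*e_{n-1-a-b}^*$. This is Proposition~\ref{prop:3.1}, so the remaining work is entirely the transformation of $\GG^{(1)}_{m,a,b,n}$ into the stated form.

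The heart of the argument is then a sequence of elementary but delicate manipulations on $\GG^{(1)}$. The steps I would carry out in order are: (i) reorganize the summation so that the sum over $\nu\part r+s$ becomes a sum over $\tau\part r+s-1$ with $\nu\LA\tau$, absorbing $c_{\nu\tau}w_\tau/w_\nu$ into $d_{\nu\tau}$ via the Pieri relation \eqref{eq:2.16}; (ii) use the Pieri-type expansions $e_r[B_\nu]=e_r[B_\tau]+e_{r-1}[B_\tau](T_\nu/T_\tau)$ and $e_{n-u}[XD_\nu/M]=\sum_k e_{n-u-k}[XD_\tau/M]e_k[X](T_\nu/T_\tau)^k$, which record how adding a corner cell changes $B$ and $D$; (iii) swap the $u$ and $k$ summations and make the change of variable $v=u+k$, collapsing the double $e$-sum $\sum_k e_{v-m-k}[tX/(1-t)]e_k[X]$ into the single $e_{v-m}[X/(1-t)]$ by the addition formula $e_j[tX/(1-t)+X]=e_j[X/(1-t)]$; (iv) split off the $e_r[B_\tau]$ and $e_{r-1}[B_\tau](T_\nu/T_\tau)$ contributions, shift $s\RA s+1$ in one and $r\RA r+1$ in the other to re-index by $\tau\part r+s$, giving $\GG^{(1b)}$ and $\GG^{(1a)}$; and (v) apply \eqref{eq:2.18} to evaluate $\sum_{\nu\LA\tau}d_{\nu\tau}(T_\nu/T_\tau)^{v-1}=(-1)^{v-2}e_{v-2}[D_\tau]+\chi(v=1)$ and $\sum_{\nu\LA\tau}d_{\nu\tau}(T_\nu/T_\tau)^{v}=(-1)^{v-1}e_{v-1}[D_\tau]$ (the $\chi(v=0)$ term dropping since $v\ge m\ge 1$). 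The $\chi(v=1)$ term surviving in $\GG^{(1b)}$ forces $m=1$ and, again by comparison with Lemma~\ref{lemma:3.1}, is identified as $\nabla h_a^*e_{b-1}^*e_{n-a-b}^*$; the rest assembles into $t^{m-1}\Phi^{(1)}+t^{m-1}\Phi^{(2)}$, noting $(-1)^{m-1}(-t)^{0}$-type sign bookkeeping gives the overall $t^{m-1}$ factors from the $(-t)^{m-1}$ that appeared when the constant $t^{u-1}$ was pulled through and rewritten after the $v$-substitution.

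I expect the main obstacle to be precisely the sign and index bookkeeping in steps (iii)--(v): keeping straight the exponents $(-1)^{n-r-s}$ versus $(-1)^{n-1-r-s}$ after the shifts $r\RA r+1$ and $s\RA s+1$, tracking how the power of $t$ migrates from $t^{u-1}$ to $t^{u-m}$ to the prefactor $(-t)^{m-1}$ and finally $t^{m-1}$, and correctly restricting the $v$-range to $v\ge m$ so that the boundary terms $\chi(v=0)$ vanish while $\chi(v=1)$ is retained exactly when $m=1$. None of these steps is conceptually hard --- each is a single application of a Pieri rule, an addition formula for $e_j$, or formula \eqref{eq:2.18} --- but the number of nested summations ($r,s,u,k$ or $v$, plus $\tau$ and $\nu$) means a single misplaced sign would propagate through the whole computation. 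The cleanest way to control this is to do the transformation once on $\GG^{(1)}$ symbolically, verify the two final pieces $\GG^{(1a)}$ and $\GG^{(1b)}$ against \eqref{eq:3.17} and \eqref{eq:3.18}, and only at the very end collect everything --- \eqref{eq:3.12}, \eqref{eq:3.15}, \eqref{eq:3.16}, \eqref{eq:3.17}, \eqref{eq:3.18} --- into the statement of Theorem~\ref{thm:3.1}, which is exactly the organization the excerpt already follows.
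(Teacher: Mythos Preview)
Your proposal is correct and follows essentially the same route as the paper: apply Lemma~\ref{lemma:3.1} and Lemma~\ref{lemma:3.2} to obtain Proposition~\ref{prop:3.1}, then transform $\GG^{(1)}_{m,a,b,n}$ via the same sequence of steps (pass from $c_{\nu\tau}$ to $d_{\nu\tau}$ via \eqref{eq:2.16}, expand $e_r[B_\nu]$ and $e_{n-u}[XD_\nu/M]$ using the corner-cell addition, swap sums and substitute $v=u+k$, collapse via the $e$-addition formula, shift $r\RA r+1$ and $s\RA s+1$, and evaluate with \eqref{eq:2.18}). The paper's proof is exactly this computation with the same bookkeeping, so there is nothing to add.
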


Our next task is to identify the  sums occurring on the righthand sides of \eqref{eq:3.19} and \eqref{eq:3.20} as simply given by the following 
\sas
 
\begin{theorem} \label{thm:3.2}
\begin{equation}
a)\ess\ess \Phi^{(1)}_{m,n,a,b}\, =\,  \BB_{m-1}^*\nabla h_{a-1}^* e_b^*e_{n-a-b}^*\ess \ess\ess\ess\ess\ess
b)\ess\ess \Phi^{(2)}_{m,n,a,b}\, =\, 
\BB_{m-2}^*\nabla h_{a}^* e_{b-1}^*e_{n-1-a-b}^*~.
\label{eq:3.21}
\end{equation}
\end{theorem}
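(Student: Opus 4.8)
The plan is to treat a) and b) in parallel, mimicking the computation that produced $\GG_{m,a,b,n}^{(1)}$ above but now \emph{starting} from the claimed right-hand sides $\BB_{m-1}^*\nabla h_{a-1}^*e_b^*e_{n-a-b}^*$ (for a)) and $\BB_{m-2}^*\nabla h_a^*e_{b-1}^*e_{n-1-a-b}^*$ (for b)). The only genuinely new ingredient is the $\BB^*$-analogue of Lemma~\ref{lemma:3.2}; once that is in hand, both identities fall out by one application of Lemma~\ref{lemma:3.1} followed by a routine reindexing, with no further recourse to the Pieri rules \eqref{eq:2.15}--\eqref{eq:2.16} or to Proposition~\ref{prop:2.3} (that machinery was already spent in producing the explicit forms \eqref{eq:3.19} and \eqref{eq:3.20}). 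Combined with Theorem~\ref{thm:3.1} this also closes the gap to Theorem~\ref{thm:2.1}.

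First I would establish that for any partition $\nu$ and integers $N\ge0$ and $k$,
\begin{equation}
\BB_k^*\,e_N\big[\tttt{XD_\nu\over M}\big]\ses
\sum_{v=k}^{N}(-1)^{v-k}\,e_{N-v}\big[\tttt{XD_\nu\over M}\big]\,e_v[D_\nu]\,e_{v-k}\big[\tttt{X\over 1-t}\big]
\label{eq:planBstar}
\end{equation}
(with the usual convention $e_j=0$ for $j<0$). This is immediate from \eqref{eq:3.2}: one writes $\tttt{(X+M/z)D_\nu\over M}=\tttt{XD_\nu\over M}+\tttt{D_\nu\over z}$, expands $e_N$ of this sum of two alphabets, uses \eqref{eq:3.3} in the form $\OM\big[\tttt{-zX\over 1-t}\big]=\sum_{\ell\ge0}(-z)^\ell e_\ell\big[\tttt{X\over 1-t}\big]$, and collects the coefficient of $z^{-k}$. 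Worth noting here: unlike Lemma~\ref{lemma:3.2}, this identity carries \emph{no} $\chi(m=1)$ correction, because the cell-adding factor now appears as the plain elementary symmetric function $e_v[D_\nu]$, so one never passes through $h_v[MB_\nu-1]$ and hence never needs the $k\ge1$ versus $k=0$ split of \eqref{eq:2.17}.

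For part a) I would apply Lemma~\ref{lemma:3.1} with $(a,b,c)\RA(a-1,b,n-a-b)$ (total degree $n-1$) to expand $\nabla h_{a-1}^*e_b^*e_{n-a-b}^*$ as the triple sum over $r\le a-1$, $s\le b$ and $\nu\part r+s$ carrying $e_r[B_\nu]\,e_{n-1}\big[\tttt{XD_\nu\over M}\big]/w_\nu$; then apply $\BB_{m-1}^*$ termwise through \eqref{eq:planBstar} with $N=n-1$, $k=m-1$, and shift the inner summation index so it runs over $v=m,\dots,n$ (so $e_v[D_\nu]$ becomes $e_{v-1}[D_\nu]$, etc.). The result matches \eqref{eq:3.19} factor for factor with $\tau=\nu$: the $e_{a-1-r}[\tttt{1\over M}]$, $h_{b-s}[\tttt{1\over M}]$, $e_r[B_\tau]/w_\tau$, $e_{n-v}\big[\tttt{XD_\tau\over M}\big]$, $e_{v-1}[D_\tau]$ and $e_{v-m}\big[\tttt{X\over 1-t}\big]$ all coincide, and the signs agree because $(-1)^{(n-1)-r-s}(-1)^{v-m}=(-1)^{m-1}(-1)^{(n-1)-r-s}(-1)^{v-1}$. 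Part b) runs identically: apply Lemma~\ref{lemma:3.1} with $(a,b,c)\RA(a,b-1,n-1-a-b)$ (total degree $n-2$), apply $\BB_{m-2}^*$ termwise via \eqref{eq:planBstar} with $N=n-2$, $k=m-2$, shift the inner index to run over $v=m,\dots,n$, and compare with \eqref{eq:3.20} term by term, the sign identity now reading $(-1)^{(n-2)-r-s}(-1)^{v-m}=(-1)^{m-1}(-1)^{(n-1)-r-s}(-1)^{v}$.

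I expect the only genuine subtlety---and hence the ``main obstacle,'' which is essentially just careful bookkeeping---to be the degenerate case $m=1$ of b), where $\BB_{m-2}^*=\BB_{-1}^*$ and $k=m-2=-1$. After the index shift the inner sum of \eqref{eq:planBstar} formally begins at $v=\max(m,2)$ rather than at $v=m$, so the $v=1$ term is a priori missing; but that term carries the vanishing factor $e_{v-2}[D_\tau]=e_{-1}[D_\tau]=0$, so the two ranges produce the same sum and b) holds uniformly for all $m\ge1$. Everything else is matching powers of $-1$ and index ranges; the $t$-exponent $t^{m-1}$ does not intervene here, having already been extracted in Theorem~\ref{thm:3.1}.
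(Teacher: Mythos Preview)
Your proof is correct and follows essentially the same approach as the paper's own proof: apply Lemma~\ref{lemma:3.1} to expand $\nabla h_{a-1}^*e_b^*e_{n-a-b}^*$ (resp.\ $\nabla h_a^*e_{b-1}^*e_{n-1-a-b}^*$), then act on the resulting $e_{n-1}\big[\tttt{XD_\gamma\over M}\big]$ (resp.\ $e_{n-2}\big[\tttt{XD_\gamma\over M}\big]$) with $\BB_{m-1}^*$ (resp.\ $\BB_{m-2}^*$) via \eqref{eq:3.2}--\eqref{eq:3.3}, and compare term by term with \eqref{eq:3.19}--\eqref{eq:3.20}. Your only departures from the paper are cosmetic: you package the $\BB_k^*e_N\big[\tttt{XD_\nu\over M}\big]$ computation as a standalone formula \eqref{eq:planBstar} rather than doing it inline twice, and you spell out the $m=1$ boundary case of b) (which the paper leaves implicit in the convention $e_{-1}=0$).
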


\begin{proof}
By Lemma \ref{lemma:3.1},
\begin{equation}
\nabla h_{a-1}^*e_b^*e_{n-a-b}^*
\ses
\sum_{r=0}^{a-1}\sum_{s=0}^b  
e_{a-1-r}[\tttt{1\over M}]h_{b-s}[\tttt{1\over M}]
(-1)^{n-1-r-s}
\sum_{\gg\part r+s} {e_r\big[B_\gg  \big]\over w_\gg}
 e_{n-1}\big[ \tttt{XD_\gg\over M}\big]~.
$$
Recall that we have
$$
\BB_a^* P[X]\ses P\big[X+\tttt{M\over z}  \big]\OM\big[\tttt{-zX\over 1-t} \big]
\Big|_{z^{-a}}~.
\label{eq:3.22}
\end{equation}

So to compute
 $
\BB_{m-1}^*\nabla h_{a-1}^* e_b^*e_{n-a-b}^*
$
we need
\begin{align*}
\BB_{m-1}^*e_{n-1}\big[\tttt{XD_\gg\over M}\big]
&=
e_{n-1}\big[\tttt{(X+\tttt{M\over z})D_\gg\over M}\big]
\OM\big[\tttt{-zX\over 1-t} \big]
\Big|_{z^{-(m-1)}} 
=
e_{n-1}
\big[\tttt{XD_\gg\over M} 
\sps\tttt{ D_\gg \over z}) \big]
\OM\big[\tttt{-zX\over 1-t} \big]
\Big|_{z^{-(m-1)}} \\
&\ses
\sum_{u=0}^{n-1}e_{n-1-u}
\big[\tttt{XD_\gg\over M}\big]
e_u[D_\gg] 
\OM\big[\tttt{-zX\over 1-t} \big]
\Big|_{z^{u-(m-1)}} \\
(\hbox{by \eqref{eq:3.3}})&=
\sum_{u=m  }^{n }e_{n -u}
\big[\tttt{XD_\gg\over M}\big]
e_{u-1}[D_\gg](-1)^{u-m}
e_{u-m }\big[\tttt{X\over 1-t}\big] ~.
\end{align*}
Thus
\begin{align*}
\BB_{m-1}^*\nabla h_{a-1}^*e_b^*e_{n-a-b}^*
&\ses
(-1)^{m-1}\sum_{r=0}^{a-1}\sum_{s=0}^b  
e_{a-1-r}[\tttt{1\over M}]h_{b-s}[\tttt{1\over M}]
(-1)^{n-1-r-s}\\
&\bigsp \, \times\,\sum_{\gg\part r+s} {e_r\big[B_\gg  \big]
\over w_\gg}
\sum_{u=m  }^{n }e_{n -u}
\big[\tttt{XD_\gg\over M}\big]
e_{u-1}[D_\gg](-1)^{u-1}
e_{u-m }\big[\tttt{X\over 1-t}\big]~.
\end{align*}
 This proves \eqref{eq:3.21} a).
 
 To prove \eqref{eq:3.21} b) we use  again  Lemma \ref{lemma:3.1}, 
 to write 
$$
\nabla h_{a}^*e_{b-1}^*e_{n-1-a-b}^*
\ses
\sum_{r=0}^{a}\sum_{s=0}^{b-1} e_{a-r}[\tttt{1\over M}]
h_{b-1-s}[\tttt{1\over M}]
(-1)^{n-2-r-s}\sum_{\gg\part r+s}
{e_r[B_\gg]\over  w_\gg}\ess
e_{n-2}\big[\tttt{X D_\gg\over M}\big]~.
$$
Consequently we need
 $
\BB_{m-2}^*e_{n-2}\big[\tttt{X D_\gg\over M}\big], 
 $
which,  using  \eqref{eq:3.22} again,  can be rewritten as 
\begin{align*}
&\BB_{m-2}^*e_{n-2}\big[\tttt{X D_\gg\over M}\big]\\
&=
e_{n-2}\big[\tttt{X D_\gg\over M}+\tttt{D_\gg\over z}\big]
\OM\big[\tttt{-zX\over 1-t} \big]
\Big|_{z^{-(m-2)}}
=
\sum_{u=0}^{n-2}e_{n-2-u}\big[\tttt{X D_\gg\over M}\big]
e_u[D_\gg]\OM\big[\tttt{-zX\over 1-t} \big]
\Big|_{z^{u-(m-2)}}\\
&=
\sum_{u=m-2  }^{n-2 }e_{n-2 -u}
\big[\tttt{XD_\gg\over M}\big]
e_{u}[D_\gg] 
h_{u-m+2 }\big[\tttt{-X\over 1-t}\big]
=
\sum_{u=m   }^{n  }e_{n  -u}
\big[\tttt{XD_\gg\over M}\big]
e_{u-2}[D_\gg] (-1)^{u-m}
e_{u-m  }\big[\tttt{X\over 1-t}\big]~.
\end{align*}
This gives
\begin{align*}
\BB_{m-2}^*\nabla h_{a}^* e_{b-1}^*e_{n-1-a-b}^*
&\ses
(-1)^{m-1}\sum_{r=0}^{a}\sum_{s=0}^{b-1} e_{a-r}[\tttt{1\over M}]
h_{b-1-s}[\tttt{1\over M}]
(-1)^{n-1-r-s}\\
&\ess\ess\ess\ess\ess\ess
\sum_{\gg\part r+s}
{e_r[B_\gg]\over  w_\gg}
\sum_{u=m   }^{n  }e_{n  -u}
\big[\tttt{XD_\gg\over M}\big]
e_{u-2}[D_\gg] (-1)^{u }
e_{u-m  }\big[\tttt{X\over 1-t}\big]~.
\end{align*}
This proves \eqref{eq:3.21} b) and completes our argument.
\end{proof}

Since Theorems \ref{thm:3.1} and \ref{thm:3.2} combined  give Theorem \ref{thm:II.2},
 this section is completed as  well.
\end{section}


\begin{section}{The combinatorial side}

For $\ggg \models n$ and $a+b+c=n$, let $\GG_\ggg^{a,b,c}$ denote the family of parking functions 
with diagonal composition $\ggg$ whose diagonal word is in the collection of shuffles 
$\DA A\shhh  B\shhh C$ with $A,B,C$ successive 

$
\vcenter{\hbox{\includegraphics[width=1.8in]{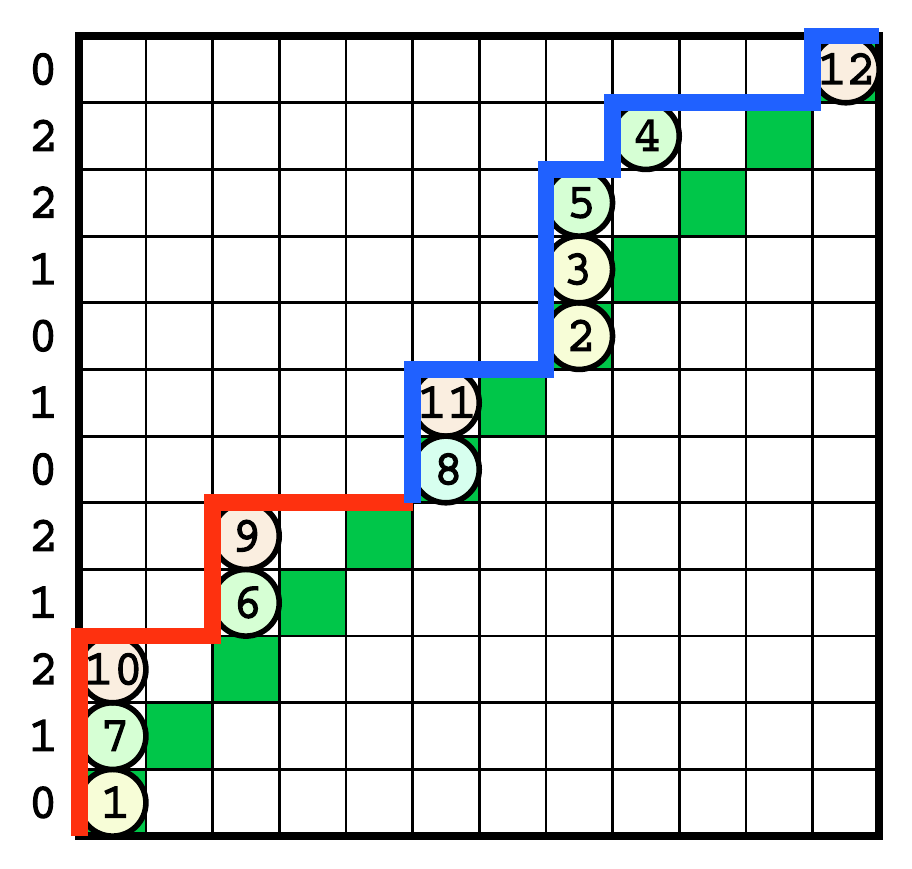}}}
$\hskip .3in
\begin{minipage}{0.62 \textwidth}
segments of the integers $1,2,\ldots ,n$ 
of respective lengths $a,b,c$. For instance in the display  below we have an element of
$\GG_{5,2,5}^{3,5,4}$. Notice that reading the cars by decreasing diagonal numbers we obtain
 $
\sig(PF)=4\, 5\, 9\, 10\, 3\, 11\, 6\, 7 \, 12\,2\, 8\, 1
 $.
which is easily seen to be a shuffle of the three words

 $$ 
\DA A=3\, 2\, 1\scs \ess\ess B=4\, 5\, 6\, 7\, 8\ess\,\hbox {and}\ess\ess 
C=9\, 10\,  11\, 12 ~.
 $$ 

 \noindent
Here and after it will be convenient to call the elements of $A$ \asmall~
cars, 
the elements of $B$ \middle~ cars
 and the elements of $C$ \abig~ cars. 
 At times, referring to these elements, we will  use the abbreviations  {\it an S},
{\it an  M } \hbox{or {\it a B } }  respectively.
\end{minipage}

The goal of  this section is to show that our family $\GG_\ggg^{a,b,c}$  satisfies the same recursion 
satisfied by  polynomials $\LL \nabla \BC_\ggg1,   e_a h_bh_{c} \RR \, $ from Theorem \ref{thm:II.2}. 
More  precisely our goal is to show that, when $a+b+c=n$ and $\aaa\models n-m$,
the polynomials

\begin{equation}
\Pi_{m,\aaa}^{a,b,c}(q,t)\ses\sum_{\multi{PF\in\CPF\cr p(PF)=(m,\aaa)}} t^{area(PF)}q^{dinv(PF)}
 \chi\big(\sig(PF)\in \,\, \DA A \shhh B\shhh C\big)
\label{eq:4.1}
\end{equation}

\vskip -.1in
\noindent
satisfy the recursion  
\begin{equation}
\hbox{\bf For $\bf m>1$:}\ess\ess \ess\ess
\Pi_{m,\aaa}^{a,b,c}(q,t)\, =\, t^{m-1} q^{\ell(\alpha)}
\Big(\sum_{\beta \models m-1} \,\,
\Pi_{\aaa,\bbb}^{a-1,b,c}(q,t)\sps
 \sum_{\beta \models m-2}   
\Pi_{\aaa,\bbb}^{a,b-1,c-1}(q,t)\Big)~,\label{eq:4.2}
\end{equation} 

\begin{align}
\hbox{\bf For $\bf m=1$:}\ess\ess \ess\ess
\Pi_{1,\aaa}^{a,b,c}(q,t)\ses
q^{\ell(\alpha)}  
&\Pi_{\aaa}^{a-1,b,c}(q,t)\sps  
\Pi_{\aaa}^{a,b-1,c}(q,t)\sps  
\Pi_{\aaa}^{a,b,c-1}(q,t)\nonumber\\
&\bigsp\bigsp 
\sps
(q-1)\sum_{i:\alpha_i = 1} q^{i-1} 
\Pi_{{\widehat \alpha}^{(i)} }^{a,b-1,c-1}(q,t)~.
\label{eq:4.3}
\end{align}
Note that this is what Theorem \ref{thm:II.2} forces us to write if Theorem \ref{thm:II.1} is to be true.
Our task is to establish  these identities using only combinatorial properties of our families
$\GG_{\gamma}^{a,b,c}$.

Before we proceed to carry this out, we need to point out a few properties of  the parking functions
in $\GG_\ggg^{a,b,c}$. To begin, note that 
since \middle~ as well as \abig~ cars are to occur in increasing order in $\sig(PF)$ 
we will never run into an $M$ on top of an $M$ nor  a $B$ on top of a $B$. Thus along  any given column of the Dyck path there is 
at most one \middle~ car and at most one \abig~ car. Clearly, on  
any column, a \middle~ car  can only lie above a \asmall~ car
 and \abig~  car  can only be on top of  a column. By contrast \asmall~ cars can be on top of \asmall~ cars.

 Note that since in the general case
$$
\DA A=a\cdots 321\scs \ess\ess\ess
B= a+1 \dots a+b\scs C=a+b+1\cdots a+b+c= n 
$$
Then if the car in cell (1,1) is an $S$ it has to be a $1$ while if in that cell we have an $M$ or a $B$
they must respectively be an $a+b$ or an $a+b+c$. For the same reason a \asmall~ car in any diagonal 
makes a primary dinv with any car in the same diagonal and to its right. While a \middle~ car can   
make a primary dinv only with a \abig~ car to its right. Likewise a \abig~ car in any diagonal 
makes a secondary dinv only with a \middle~ or a \asmall~ car to its right and in the adjacent lower diagonal. 

Keeping all this in mind we will start by proving the identity in \eqref{eq:4.2}. We do this by constructing a 
bijection $\phi$ between the family below and to the left and the union of the  families below and to the right
\begin{equation}
\GG_{m,\aaa}^{a,b,c}\ess\ess \Longleftrightarrow\ess\ess 
\sum_{\bbb\models m-1}\GG_{\aaa,\bbb}^{a-1,b,c}
\sps
\sum_{\aaa\models m-2}
  \GG_{\aaa,\bbb}^{a,b-1,c-1} .
\label{eq:4.4}
\end{equation}
Here sums denote disjoint unions. Notice that each $PF\in \GG_{m,\aaa}^{a,b,c}$ necessarily starts 
with $m$ cars, the first of which is in cell $(1,1)$ and the next $m-1$  cars are in higher diagonals. We will
refer to this car configuration as {\it the first  section of $PF$} and the following car configuration as
{\it the rest  of $PF$}. Recall  that if the first car is \asmall~ then it must be a $1$.
Next note that if the first car is not  \asmall~  then it must be $a+b$, for the first car can be  an $a+b+c$  
only when $m=1$. Moreover, on top of $a+b$, also be  a \abig~ car  for otherwise we 
are again back to the $m=1$ case.  Also notice  that, unless $m=2$, then the  third car in the first section 
is in diagonal $1$ next to the \abig~ car atop  $a+b$. To construct $\phi(PF)$ we proceed as follows 

\begin{enumerate}
\item[$(i)$] {\it If the first car is $1$ : }  Remove the main diagonal cells from the first section and cycle it to the end, as we do in the display below for $PF\in \GG_{5,2,5}^{3,5,4}$
$$
 \vcenter{\hbox{\includegraphics[width=4.5in]{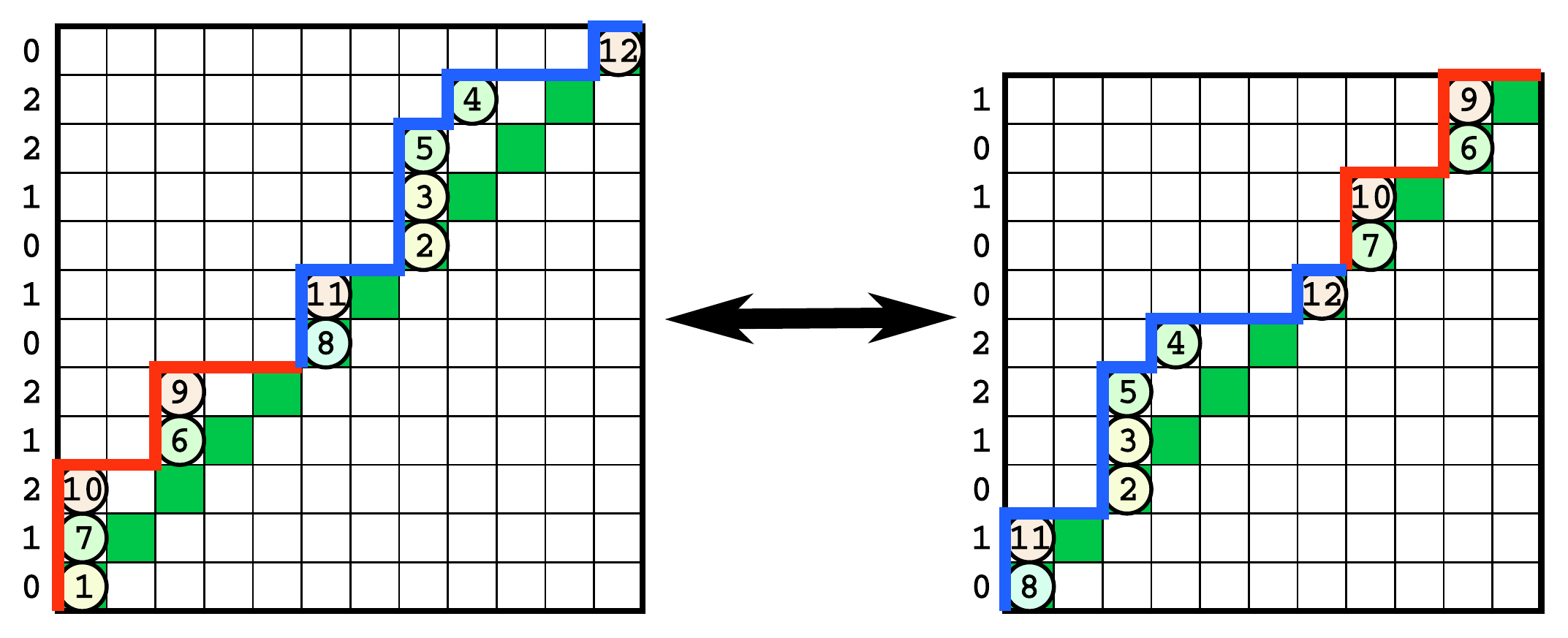}}}
$$
\item[$(ii)$] {\it If the first car is $a+b$ : } Remove the entire first column and the remaining main diagonal cells from the first section and cycle it to the end, as we do in the display below for another $PF\in \GG_{5,2,5}^{3,5,4}$
$$
 \vcenter{\hbox{\includegraphics[width=4.5in]{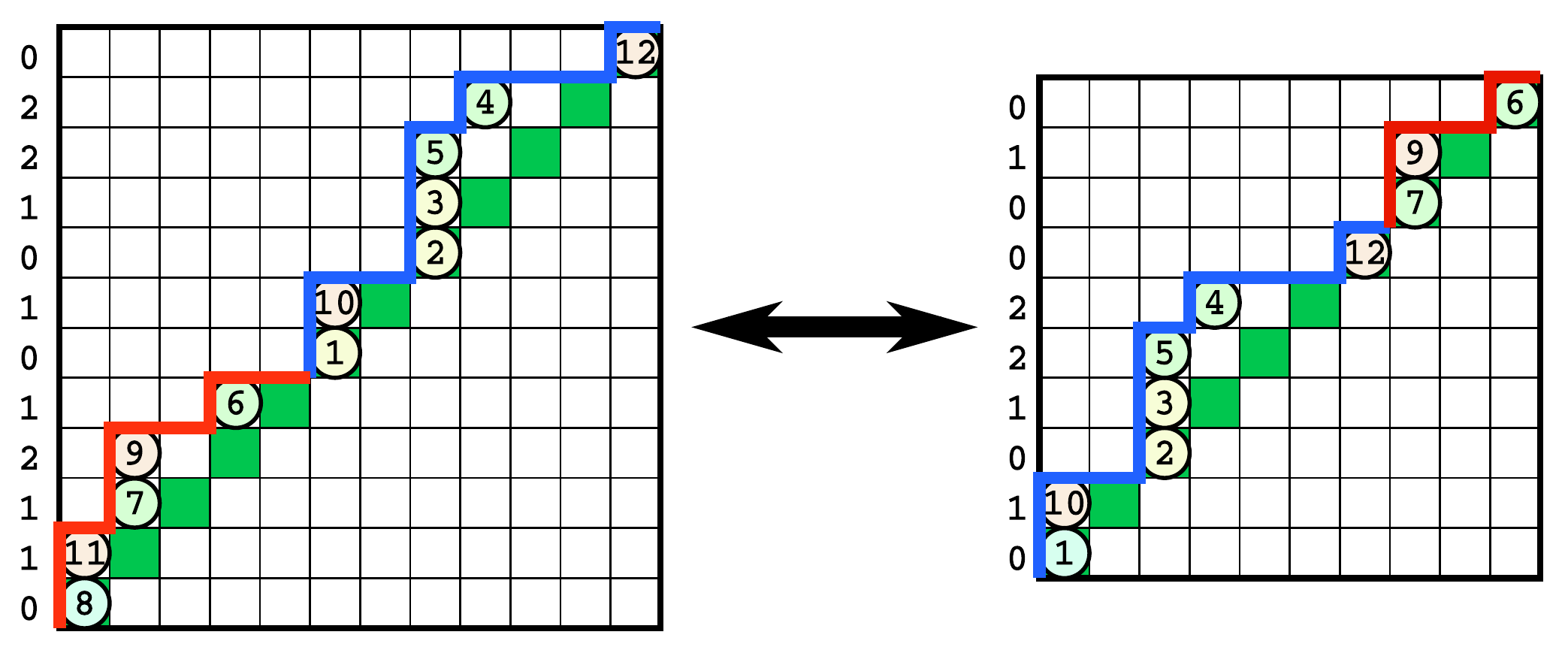}}}
$$
\item[$(iii)$] {\it If the first car is $a+b$  and $m =2$: } Remove the entire first section.
\end{enumerate}

\noindent 
To complete the construction of $\phi(PF)$,  in case $(i)$ we need only decrease by one 
all the remaining  cars numbers. In  cases $(i)$ and  $(ii)$,
if  the \abig~ car in the removed column is  a $u$,  we must  decrease by one all the \middle~ numbers   and the \abig~ numbers smaller than $u$
and finally   decrease by $2$  all the \abig~ cars numbers larger than $u$  . This given, 
notice that in case $(i)$ $\phi(PF)$ lands in  the first sum   on the right  hand side of 
\eqref{eq:4.4} while in cases $(ii)$ and $(iii)$ $\phi(PF)$ lands in  the second sum. 
Moreover since the families in \eqref{eq:4.4} are disjoint , $\phi$ has a well defined inverse, and therefore it is a bijection.
 Note further that since all the diagonal numbers of surviving cars of the first section have  decreased by one as they have been cycled,
 not only the desired shuffle condition for $\phi(PF)$ is achieved but  all  the primary dinvs between pairs of elements in different sections 
 have become secondary dinvs, and all the secondary ones  have become primary. For pairs of elements in the same section there is clearly 
 no dinv changes since their relative order has been  preserved. Remarkably in all  cases, we have the same  dinv loss of 
 $\ell(\aaa)$ units, in case $(i)$ caused  by the loss of primary dinvs  created by the removed car  $1$ and in cases $(ii)$ and $(iii)$ by the
 loss of a primary dinv created by a \abig~ car to the right of $a+b$ plus the loss of secondary dinv created by a \asmall~ or \middle~ car with the removed \abig~ car.
 Finally we note  that the lowering by one of all  the diagonal  numbers of the first section causes a loss of area of $m-1$ in   
  each of the three cases.  These observations, combined, yield us a proof of the equality in \eqref{eq:4.2}.
 \sas
 
 Our next task is to establish \eqref{eq:4.3}.  Here we have three distinct possibilities for the first car. The polynomial on the left of \eqref{eq:4.3} will  be split 
 into three parts
\begin{equation}
 \Pi_{1,\aaa}^{a,b,c}(q,t)\ses \Pi_{1,\aaa,S}^{a,b,c}(q,t)\sps \Pi_{1,\aaa,M}^{a,b,c}(q,t)\sps \Pi_{1,\aaa,B}^{a,b,c}(q,t) 
 \label{eq:4.5}
\end{equation}
 according as the first car is a  $S$, an $M$ or a $B$. We will show that

\begin{align}
&a)\ess\ess \Pi_{1,\aaa,S}^{a,b,c}(q,t)\ses q^{\ell(\alpha)}  \Pi_{\aaa}^{a-1,b,c}(q,t)
\nonumber\\
&b)\ess\ess  \Pi_{1,\aaa,M}^{a,b,c}(q,t)\ses
\Pi_{\aaa}^{a,b-1,c}(q,t)\sps
(q-1)\sum_{i:\alpha_i = 1} q^{i-1} 
\Pi_{{\widehat \alpha}^{(i)} }^{a,b-1,c-1}(q,t)\label{eq:4.6}
\\
&c)\ess\ess
\Pi_{1,\aaa,B}^{a,b,c}(q,t) 
\ses \Pi_{\aaa}^{a,b,c-1}(q,t)~.\nonumber
\end{align}

Note, that  in cases  a), b) and c) the first cars must  be  $1$, $a+b$ and $n$ respectively.
Cases a) and c) are easily dealt with. If $PF \in \GG_{1,\aaa}^{a,b,c}$ and the first car is a  $1$  we simply 
let  $\phi(PF)$ be parking function  in $ \GG_{\aaa}^{a-1,b,c}$  obtained by removing  the $1$ and decreasing by one  all remaining car numbers. 
Note that since the removed $1$ made a primary dinv with every car in the $0$-diagonal the dinv loss is precisely $\ell(\aaa)$ in this case.
This proves \eqref{eq:4.6} a).
If the first car is   $n$  we only need to  remove the first car to obtain  $\phi(PF)\in \GG_{\aaa}^{a,b,c-1}$.   
 The identity in \eqref{eq:4.6} c) then follows immediately since the removed $n$ was causing no dinv with any cars in its diagonal. 

 We are left with \eqref{eq:4.6} b). This will require a more elaborate argument. Note that removing a first car $a+b$ causes a loss of dinv
 equal to the number of \abig~ cars cars in its diagonal. The problem  is that we have to figure out  a  way of telling what that number is.
 What is remarkable, is that our work on the symmetric function side
 led us to predict  \eqref{eq:4.6} b), which, as we shall see, solves  this problem  in  a truly surprising manner.
 
 To proceed we need some definitions. Firstly, recall  that $\GG_{1,\aaa,M}^{a,b,c}$ denotes the family of   $PF\in \GG_{1,\aaa}^{a,b,c}$ whose first car is $a+b$.
 It will also be convenient to view  the parking functions 
 in $\GG_{\aaa}^{a,b-1,c}$,  for $\aaa=(\aa_1,\aaa_2,\ldots ,\aaa_k)$, as made up of  $k$ sections, the $i^{th}$ one consisting of the configuration
 of cars whose indices are in the interval
 $$
 [\aa_1+\cdots +\aaa_{i-1}\scs  \aa_1+\cdots +\aaa_i]~.
 $$
 Note further that if a \abig~ car is in the diagonal it must consist,   of a single section, since no car can be placed on top of it. 
 Thus, $\aaa$ being fixed,  any  given $PF'\in \GG_{\aaa}^{a,b-1,c}$ can  have \abig~ main diagonal cars only in the sections with indices in the set
\begin{equation}
 S(\aaa)\ses \{i\,: \, \aaa_i=1 \}~.
\label{eq:4.7}
\end{equation}
 This given, it will convenient to denote by  $S(PF')$ the subset of $S(\aa)$ giving the  indices of the sections containing the \abig~ cars in the main diagonal of $PF'$.

 Keeping this in mind, for $PF\in \GG_{1,\aaa,M}^{a,b,c}$ let 
  $PF'\in \GG_\aaa ^{a,b-1,c}$ be the parking function obtained by removing $a+b$ and lowering 
 by one all \abig~ car numbers of $PF$. If $r=|(S(PF')|$ is the number of \abig~ cars in the $0$-diagonal of $PF'$ then,  since the removed $a+b$ gave  no
contribution to the area of $PF$, it follows  that   
\begin{equation}
 t^{area(PF)}q^{dinv(PF)}\ses t^{area(PF')}q^{dinv(PF')}q^{r }~.
\label{eq:4.7p}
\end{equation}
 Guided by \eqref{eq:4.6} b) we start by writing   
$$
q^r=1+(q-1)+(q-1)q+(q-1)q^2+\cdots +(q-1)q^{r-1},
$$
so that \eqref{eq:4.7p} may be rewritten as
\begin{equation}
t^{area(PF)}q^{dinv(PF)}\, =\,
t^{area(PF')}q^{dinv(PF')}+(q-1)\sum_{s=1}^{r}t^{area(PF')}q^{dinv(PF')+s-1}.
\label{eq:4.8}
\end{equation}
Next let
\begin{equation}
S(PF')\ses \{1\le i_1<i_2<\cdots <i_r\le k\} ~.
\label{eq:4.9}
\end{equation}
This given, 
 let $PF^{(i_s)}$ be  the parking function   obtained by removing from $PF'$ the \abig~ car in section $i_s$. 
and lowering by one all the \abig~ car numbers higher the removed one. Using  our notational convention about 
${\widehat \alpha}^{(i_s)}$, we  may write
\begin{equation}
PF^{(i_s)}\in \GG_{{\widehat \alpha}^{(i_s)} }^{a,b-1,c-1}.
\label{eq:4.10}
\end{equation}

Now we have 
\begin{equation}
dinv(PF)\ses dinv(PF^{(i_s)})\sps i_s-1\sps 1\sps  r-s~.
\label{eq:4.11}
\end{equation}
To see this recall that we obtained $PF^{(i_s)}$ from $PF$ by removing car $a+b$ and the \abig~ diagonal car of section $i_s$. Now every diagonal car between these two removed cars was contributing a dinv for $PF$: the \abig~ ones with $a+b$ and and the rest of them with the removed \abig~ car. That accounts for $i_s-1$ dinvs (one for each section of $PF'$ preceding section $i_s$. An additional dinv was caused by the removed pair and finally we must not forget that the remaining \abig~ diagonal cars past section $i_s$, $r-s$ in total , created a dinv with $a+b$. 
Thus \eqref{eq:4.8} holds true precisely as stated.
Since $dinv(PF)=dinv(PF')+r$ we may rewrite \eqref{eq:4.10} as
$$
dinv(PF')+s-1\ses dinv(PF^{(i_s)})\sps i_s-1    
$$
and thus \eqref{eq:4.8} itself becomes
$$
t^{area(PF)}q^{dinv(PF)}\, =\,
t^{area(PF')}q^{dinv(PF')}+(q-1)\sum_{s=1}^{r}t^{area(PF')}q^{dinv(PF^{(i_s)})\sps i_s-1}.
$$
Since removing diagonal cars does not change areas, this may be rewritten as 
$$
t^{area(PF)}q^{dinv(PF)}\, =\,
t^{area(PF')}q^{dinv(PF')}+(q-1)\sum_{s=1}^{r}t^{area(PF^{(i_s)})}q^{dinv(PF^{(i_s)})\sps i_s-1}~,
$$
or even better, using \eqref{eq:4.9}
\begin{equation}
t^{area(PF)}q^{dinv(PF)}\, =\,
t^{area(PF')}q^{dinv(PF')}+(q-1)\sum_{i\in S(PF')} t^{area(PF^{(i)})}q^{dinv(PF^{(i )})\sps i-1}~.
\label{eq:4.12}
\end{equation}
Since the pairing $PF\leftrightarrow PF'$ is clearly a bijection of $\GG_{1,\aaa ,M}^{a,b,c}$ onto $\GG_{\aaa}^{a,b-1,c}$,
 to prove \eqref{eq:4.6} b) we need only show that summing the right hand side of \eqref{eq:4.12} over $PF'\in \GG_{\aaa}^{a,b-1,c}$ gives the right hand
 side of \eqref{eq:4.6} b). Since by definition
 $$
\sum_{PF'\in \GG_{\aaa}^{a,b-1,c}}t^{area(PF')}q^{dinv(PF')}\ses  \Pi_{\aaa}^{a,b-1,c}(q,t)
 $$
 and
 $$
\sum_{PF^{(i)}\in \GG_{{\widehat \alpha}^{(i)} }^{a,b-1,c-1}}t^{area(PF^{(i)})}q^{dinv(PF^{(i)})}\ses  
\Pi_{{\widehat \alpha}^{(i)} }^{a,b-1,c-1}(q,t)~,
 $$
we are reduced to showing that
\begin{equation}
\sum_{PF'\in \GG_{\aaa}^{a,b-1,c}} 
\sum_{i\in S(PF')} q^{i-1}\Pi_{{\widehat \alpha}^{(i)} }^{a,b-1,c-1}(q,t)
\ses  \sum_{i\in S(\aaa)} q^{i-1} 
\Pi_{{\widehat \alpha}^{(i)} }^{a,b-1,c-1}(q,t)~.
\label{eq:4.13}
\end{equation}

It develops that this identity is due to a beautiful combinatorial mechanism that  
can best be understood  by a specific example. Suppose that $\aa$ is a composition with only three parts equal to one.
In the following  figure  and on the left we display a sectionalization of the family $\GG_{\aaa}^{a,b-1,c}$ according to which 
of the three sections of length one of $\aa$ contains a \abig~ car and which does not. We depicted a singleton section with a \abig~ car by a boxed $B$ 
and a singleton section   with a \asmall~ or \middle~ car by a boxed cross. On the right of the display we have three columns depicting the
analogous sectionalization of the the three families $\GG_{{\widehat \alpha}^{(1)}}^{a,b-1,c-1}$, $\GG_{{\widehat \alpha}^{(2)}}^{a,b-1,c-1}$
and $\GG_{{\widehat \alpha}^{(3)}}^{a,b-1,c-1}$. Here the deleted section is depicted as a darkened box,. The arrows connecting the two sides indicate how the summands on the left hand side of \eqref{eq:4.13} 
are to be arranged to give the summands on the right hand side. 
$$
 \vcenter{\hbox{\includegraphics[width=5in]{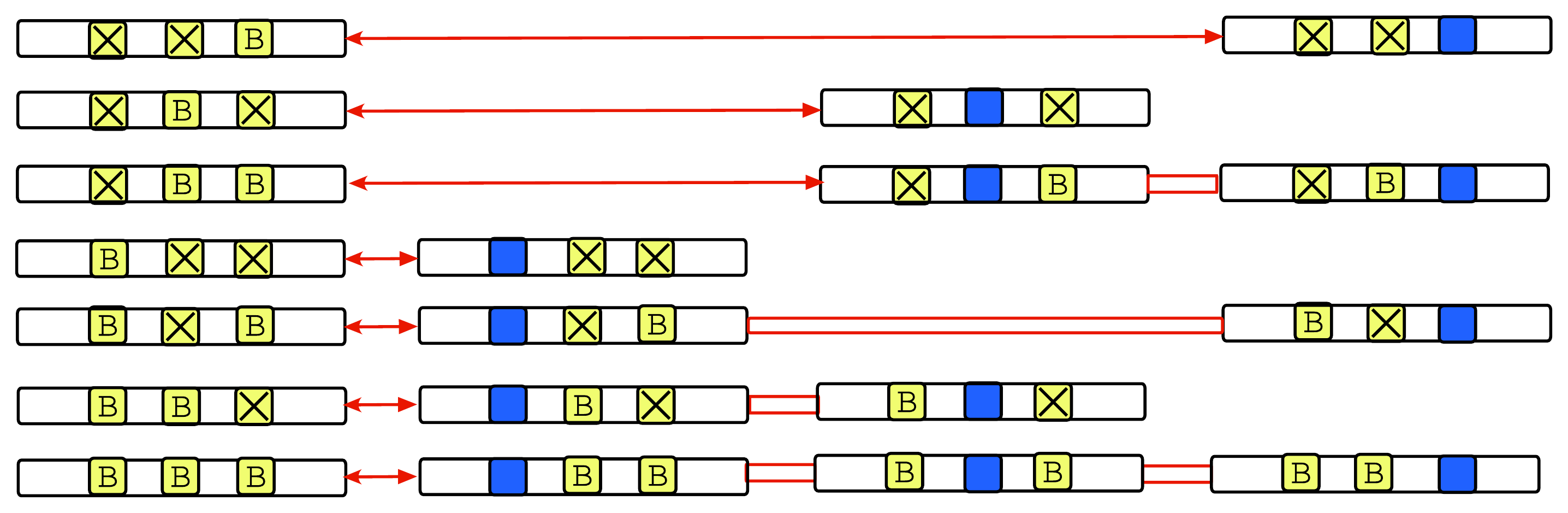}}}
$$
Notice that only the terms with a non empty $S(PF ')$ do contribute to the left hand side of \eqref{eq:4.13}. Accordingly, on the left hand side of this diagram
we have depicted the seven terms with non empty  $S(PF ')$. Notice also that the number of summands on the right hand side that correspond to a summand on the left hand side is precisely given by the size of the subset $S(PF')$. Finally as $PF'$ varies among the elements of $\GG_{\aaa}^{a,b-1,c}$ with a non empty $S(PF')$ the corresponding terms on the right hand side pick up all the sections of $\GG_{{\widehat \alpha}^{(1)}}^{a,b-1,c-1}$, $\GG_{{\widehat \alpha}^{(2)}}^{a,b-1,c-1}$
and $\GG_{{\widehat \alpha}^{(3)}}^{a,b-1,c-1}$ each exactly once an only once.
\sa

\begin{remark}
We find this last argument truly remarkable. Indeed we 
 can only wonder   how the $C$ and $B$ operators could be so savvy about the combinatorics of parking functions.
For we see here that the ultimate consequences of the innocent looking commutativity property in \eqref{eq:2.38} b) as expressed by \eqref{eq:II.18}
provide exactly what is needed to establish the equality in \eqref{eq:4.13}.
\end{remark}

We terminate our writing with a few words concerning the equality of the base cases.
To begin  we notice from \eqref{eq:4.2} and \eqref{eq:4.3} that at each iteration we loose at least one of the cars. At the moment all the \asmall~ cars are gone, the identity reduces to the $hh$
result established in \cite{GXZ}. At the moment all \middle~ or \abig~ cars are gone the identity reduces to the $eh$ result established in \cite{GXZ:2011}. If we are left with only \middle~ cars
or only \abig~ cars the identity reduces to the equality
\begin{equation}
\LL \nabla \BC_1^n \, 1\scs h_n\RR \ses \sum_{\multi{PF\in \CPF\cr p(PF)=(1^n)} }
t^{area(PF)}q^{dinv(PF)}~.
\label{eq:4.13p}
\end{equation}

Indeed, since \middle~ (or \abig) cars can't be placed on top of each other all columns of 
north steps of the supporting Dyck path must have length exactly $1$. This forces 
the cars to be all in the main diagonal . Thus $p(PF)=1^n$ and the area  vanishes. 
Since the cars must increase as we go down the diagonal the dinv vanishes as
well and the right hand side reduces to a single term equal to $1$. 
Now, using the definition in \eqref{eq:II.9},  it was shown in \cite{GXZ} (Proposition 1.3) that
$$
\BC_1\BC_1\cdots \BC_1 1\ses   q^{-{n \choose 2} }\TH_{n}[X;q,t]~,
$$
and the definition of $\nabla$ gives
\begin{equation}
\nabla \BC_1\BC_1\cdots \BC_1 1\ses  \TH_{n}[X;q,t]~.
\label{eq:4.14}
\end{equation}
Thus \eqref{eq:4.13} reduces to
$$
\LL \TH_{n}[X;q,t]\scs h_n\RR \ses 1
$$
which  is a well known equality (see \cite{GHT:1999}).   
\sas

In the case that we are left with only \asmall~ cars our equality is much deeper 
since we are essentially back to a non-trivial special  case of the $eh$ 
result.

However, we do not need to use any of our previous results, since in at most  
$n-1$ applications of our recursion we should be left with a single car, 
where   there is only one parking function with no area and no dinv,
forcing the right  hand side of \eqref{eq:II.11} to be equal to 1, while the left  hand side
reduces to $\LL \nabla \BC_1\,1 \scs e_1\RR$ or $\LL \nabla \BC_1\,1 \scs h_1\RR$
as the case may be. Both of these scalar products are trivially equal to $1$ ,
since $\BC_1\,1=h_1$ and $\nabla h_1=h_1$. 

This terminates our treatment of the combinatorial side.
\end{section}
\vskip .4in

\end{document}